\def\R{{\mathbb R}}
\def\Z{{\mathbb Z}}
\def\N{{\mathbb N}}
\def\fR{{\mathfrak R}}
\def\mt{{\mathfrak T}}
\def\fy{{\mathfrak Y}}
\def\fz{{\mathfrak Z}}
\def\cR{{\mathcal R}}
\def\cI{{\mathcal I}}
\def\cK{{\mathcal K}}
\def\cg{{\mathcal G}}
\def\ct{{\mathcal T}}
\def\cz{{\mathcal Z}}
\def\ce{c_\ve}
\def\cQ{{\mathcal Q}}
\def\tcq{\tilde\cQ}
\def\cX{{\mathcal X}}
\def\fe{{\mathfrak E}}
\def\fen{{\mathfrak E}^{(n)}}
\def\fent{{\hat{\mathfrak E}}^{(n)}}
\def\G{{\mathcal G}}
\def\T{{\mathcal J}}
\def\ss{{\mathcal S}}
\def\mr{\mathfrak R}
\def\ve{\varepsilon}
\def\a{\alpha}
\def\b{\beta}
\def\ga{\gamma}
\def\l{\lambda}
\def\Ga{\Gamma}
\def\La{\Lambda}
\def\P{{\mathcal P}}
\def\pr{{\mathbb P}}
\def\E{{\mathcal E}}
\def\Er{{\mathbb E}}
\def\nn{\nonumber}
\def\d{\delta}
\def\b{\beta}
\def\ga{\gamma}
\def\={&=&}
\def\vtp{\vec\ct_k^{\,\prime}}
\def\vtpp{\vec\ct_k^{\,\prime\prime}}
\newtheorem{teorema}{Theorem}[section]
\newtheorem{lema}[teorema]{Lemma}
\newtheorem{observacao}[teorema]{Remark}
\newtheorem{teo}{Theorem}[section]
\newtheorem{coro}{Corollary}[section]
\newtheorem{defin}{Definition}[section]
\def\@email#1#2{%
 \endgroup
 \patchcmd{\titleblock@produce}
  {\frontmatter@RRAPformat}
  {\frontmatter@RRAPformat{\produce@RRAP{*#1\href{mailto:#2}{#2}}}\frontmatter@RRAPformat}
  {}{}
}%
\begin{document}

\preprint{AIP/123-QED}

\title[Scaling limits of the Bouchaud and Dean trap model on Parisi's tree in ergodic and aging time scales]{Scaling limits of the Bouchaud and Dean trap model on Parisi's tree in ergodic and aging time scales}
\author{Luiz Renato Fontes }
\email{lrfontes@usp.br}
\thanks{Partially
		supported by CNPq grant 307884/2019-8, and FAPESP grants 2017/10555-0 and 2023/13453-5}
\affiliation{IME-USP, Rua do Mat\~ao 1010, 05508-090
		S\~ao Paulo SP,  Brazil.}
        
\author{Andrea Hernández}%
 \thanks{Supported by	a CAPES institutional fellowship}
\affiliation{ 
IME-USP, Rua do Mat\~ao 1010, 05508-090
		S\~ao Paulo SP,  Brazil.
}%


\date{\today}

\begin{abstract}
	We take scaling limits of the Bouchaud and Dean trap model on Parisi's tree in time scales where the dynamics is 
	either {\em ergodic} (close to equilibrium) or {\em aging} (far from equilibrium). These results follow from a
	continuity theorem  formulated for a certain kind of process on trees, which we call a 
	{\em cascading jump evolution}, defined in terms of a collection of {\em jump functions}, with a cascading structure 
	given by the tree.
\end{abstract}

\maketitle

%

\section{Introduction}
\label{intro}

As a means to obtain an understanding of dynamical effects in disordered systems such as spin glasses at low temperature, Bouchaud and Dean  \cite{BD} proposed a phenomenological model for the dynamics of such a system, framed within a tree structure inspired in Parisi's Replica Symmetry Breaking (equilibrium) picture \cite{MPV} as follows. 

Consider a tree with $k$ generations/levels/layers rooted at $\emptyset$ in generation 0. Each site of generation $i$, $0\leq i<k$, has $M_{i+1}$ descendants in generation $i+1$,
where $M_j\geq1$, $1\leq j\leq k$. Let us denote a site of the $j$-th generation, $1\leq j\leq k$, by $\bar x_j=(x_1,\ldots, x_j)$, with $1\leq x_j\leq M_j$. We equip each site $\bar x_j$, $1\leq x_j\leq M_j$,  with a time line where a Poisson point process of rate $\l^j_{\bar x_j}$ is bestowed. Poisson processes at different time lines are independent.
The family of rates $\{\l^j_{\bar x_j};\,1\leq x_j\leq M_j,\,1\leq j\leq k\}$ is itself constituted of independent random variables such that $\l^j_{\bar x_j};\,1\leq x_j\leq M_j$ are identically distributed, $1\leq j\leq k$, and the distribution of $\tau^j_{\bar x_j}=(\l^j_{\bar x_j})^{-1}$ is attracted to an $\a_j$-stable law, with $\a_j\in(0,1)$, $1\leq j\leq k$.

The dynamics takes place on the leaves of the tree (namely on the sites $\bar x_k$, $1\leq x_j\leq M_j$, $1\leq j\leq k$), and it is a Markov jump process, as follows. When at $\bar x_k$ at time $t\geq0$, the process waits for the first Poisson mark after $t$ among the time lines of $\bar x_j$, $1\leq j\leq k$ (that is, it looks for the next Poisson mark among $\bar x_k$ and its ascendants up to the first generation). Let $T$ be the time corresponding to such a mark and $J$ be the generation of the ascendant to whose time line the corresponding mark belongs. The jump is then performed at time $T$  from $\bar x_k$ to a descendant of  $\bar x_{J-1}$ on the leaves of the tree chosen uniformly at random. Note that the destination of the jump is of the form $(x_1,\ldots, x_{J-1}, X_J,\ldots, X_k)$, with $X_j$ independent and  uniform in $\{1,\ldots,M_j\}$, 
$1\leq j\leq k$; when $J=1$, we put $\bar x_{J-1}=\emptyset$, and omit it. (The randomness involved in each jump is independent from jump to jump.)
We will refer to this model as the Bouchaud and Dean Trap Model (on Parisi's Tree) ---BDTM for short, or $k$-BDTM when we want to make the number of levels explicit.

In this paper we will take the scaling limit of this dynamics as time and {\em volumes} $M_1,\ldots,M_k$ diverge suitably, under three different regimes, one which we will call {\em ergodic}, and two other, {\em aging}, regimes. 

\subsection{Previous results}
\label{previous}

One of the dynamical effects targeted in \cite{BD} is the {\em aging} phenomenon, and the authors effectively show, in Section B of \cite{BD}, that under appropriate scaling, a certain two-time correlation function of the dynamics, namely the probability that there is no jump of the dynamics during a given time interval $(t_w,t_w+t)$, converges as volumes and time diverges in a certain way to a(n explicit) function of the quotient $t/t_w$, which signals (normal) aging. The authors explicitly require that $\a_1<\cdots<\a_k$ for their result, even though this condition does not seem to be necessary in their computation (nor indeed in our present analysis) --- see Remark~\ref{rmk:order}. 

At this point it should perhaps be said that the reasoning for those results, even while convincing, and yielding the correct results, seem to us to be not fully rigorous, at least not in the usual sense adopted in the mathematics literature. Addressing the latter issue is not however the main motivation of our study. We are indeed more interested in understanding the asymptotic behavior of the {\em full} dynamics (not only one of its specific correlation functions) in different scaling regimes, and in particular how the aging phenomenon can be obtained as a property of a limiting such dynamics.

This program has been carried out before for a variety of related models, like the $k=1$ case of the present dynamics and related 1-level dynamics \cite{FM08, FL1, BFGGM, FP13} ---see also \cite{ABC,ABG1,ABG2, BF} for related mathematical results for the same or related 1-level dynamics. A model which extends the 1-BDTM to multiple levels in a different way was proposed by Sasaki and Nemoto~\cite{SN}, who also derived aging results similar to those in~\cite{BD} with a similar approach ---more about it and its relation to the BDTM in Subsection~\ref{ssec:sn} below---; \cite{FGG} has the analysis of the full dynamics in a single regime, the ergodic one; \cite{GG15} looks at complementary regimes (in a sense); \cite{FG,FFZ} study the 2-level such dynamics in the hypercube, and with a set of random parameters related to the GREM \cite{D80,D85} in different regimes. The GREM-like Trap Model with infinitely many levels (in the limit) was considered in~\cite{FP21,GG15} ---it is a tricky issue to even make sense of the limiting dynamics for this model; we will see below that this is trivial for the BDTM; see Remark~\ref{rmk:inf_levels}.

A farther departure is the $p$-spin dynamics, whose aging behavior is analyzed in \cite{BG}. We could also mention models in $\Z^d$ (rather than the tree, the hypercube, or related graphs), where aging results have been established in varied situations \cite{FIN, AC05,  ACM, AC07, BC, FM14}.

\subsection{Relation to the Sasaki and Nemoto/GREM-like Trap Model}
\label{ssec:sn}

The 1-BDTM is related to REM \cite{D80} in the sense that its equilibrium distribution has the same infinite volume form as the infinite volume Gibbs measure for the REM. 
It is not clear to us whether Bouchaud and Dean had any intention to have their multilevel dynamics correspond to a particular spin glass model in this way. In any case, we may consider the equilibrium distribution of the $k$-BDTM for $k\geq2$ and attempt to take its infinite volume limit. That distribution is given as follows. Let us consider the $k=2$ case for simplicity. The weight of $\bar x_2$ is given by\footnote{The derivation of~\eqref{eqbd} is fairly straightforward, and will not play a role in our results, so we leave it as an exercise to the interested reader.}
\begin{equation}\label{eqbd}
	\frac{\tau^1_{x_1}}{\sum_{y_1=1}^{M_1}\tau^1_{y_1}}\,
	\frac{\frac{\tau^2_{\bar x_2}}{\tau^1_{x_1}+\tau^2_{\bar x_2}}}
	{\sum_{y_2=1}^{M_2}\frac{\tau^2_{x_1y_2}}{\tau^1_{x_1}+\tau^2_{x_1y_2}}}.
\end{equation}
In order to take the infinite volume limit of~\eqref{eqbd}, we need to specify how the volumes diverge in relation to one another; we will adopt a {\em fine tuning} such relation (here and also many times throughout) such that the maxima of $\tau^1_\cdot$ and $\tau^2_\cdot$ variables have the same order of magnitude. The resulting limit is thus a (random) probability distribution whose weights are given by
\begin{equation}\label{eqbd_inf}
	\frac{\ga^1_{x_1}}{\sum_{y_1\in[0,1]}\ga^1_{y_1}}\,
	\frac{\frac{\ga^2_{\bar x_2}}{\ga^1_{x_1}+\ga^2_{\bar x_2}}}
	{\sum_{y_2\in[0,1]}\frac{\ga^2_{x_1y_2}}{\ga^1_{x_1}+\ga^2_{x_1y_2}}},
\end{equation}
where $\ga^1_{x_1}$, $x_1\in[0,1]$, are the increments of an $\a_1$-stable subordinator; and for each $x_1\in[0,1]$, $\ga^2_{\bar x_2}$, $x_2\in[0,1]$, are the increments of an $\a_2$-stable subordinator; all the subordinators are collectively independent. It becomes clear that this differs from the corresponding infinite volume GREM Gibbs measure (in the {\em cascading phase}, with the appropriate choice of parameters~\cite{FG}), given by
\begin{equation}\label{eqsn_inf}
	\frac{\ga^1_{x_1}\ga^2_{\bar x_2}}{\sum_{\bar y_2\in[0,1]^2}\ga^1_{y_1}\ga^2_{\bar y_2}}.
\end{equation}

It is remarkable that in order that~\eqref{eqsn_inf} makes sense as a probability, we must have $\a_1<\a_2$, otherwise the denominator diverges almost surely, but this is {\em not} the case for the denominators in~\eqref{eqbd_inf}, which converge almost surely.

This point notwithstanding, Sasaki and Nemoto, arguing that the BDTM "is not so realistic", since it allows for the jump  from a given $\bar x_k$ involving an older ascendant of $\bar x_k$ (closer to $\emptyset$) to be more probable than the jump involving a younger ascendant, proposed another multilevel model~\cite{SN}, which turns out to have its infinite volume equilibrium measure correspond to the GREM infinite volume Gibbs measure~\cite{Gav}, and for this reason was called (in~\cite{FGG}) the GREM-like Trap Model (GLTM). We refer to ~\cite{SN, FGG} for detailed descriptions of the model (as well as the results to which we have already alluded to above ---see also~\cite{FG}). 

The point raised by Sasaki and Nemoto indeed holds in finite volume, but as we will see below, {\em it does not} for the scaling limits we derive here, as discussed below ---see Remark~\ref{rmk:sn}. Be that as it may, it seems like a worthy project, undertaken in this paper, to strive to understand the scaling, in particular aging, behavior of the BDTM from a fully dynamical point of view, similarly to what has been done for the GLTM and related models, in order to close a gap, or at least shed light on a model formulated in a foundational paper for  many of the mathematics and physics literature of disordered systems related to aging.

\subsection{Model and results}
\label{mod}

We have already described the model in detail at the introduction, but it turns out to be convenient  to represent the dynamics not as we did, as the current leaf $\bar x_k$. Indeed it is natural in the context of aging to represent it as $\tau^k_{\bar x_k}$, as has been occasionally done in the literature, but as will become clear below, it is more convenient and also natural to represent it as the vector
$(\Ga^1_{x_1},\ldots,\Ga^k_{\bar x_k})$, where for $1\leq j\leq k$, $\Ga^j_{\bar x_j}=(\La^j_{\bar x_j})^{-1}$, and $\La^j_{\bar x_j}=\sum_{i=1}^j\l^i_{\bar x_i}$. It follows from elementary and well known properties of the exponential distribution that, once the BDTM is at $\bar x_k$ at a given time, $\Ga^j_{\bar x_j}$ represents the expected (remaining) waiting time thence till the first jump `beyond level $j+1$', that is, a jump for which $J\leq j$ in the description at the introduction. 
This is relevant for aging, particularly when computing `no jump' correlation functions, usually considered in this context.

It is also convenient, in order to take scaling limits of the model, to represent it in terms of {\em clock processes}, which are the most natural objects to take scaling limits of, from a technical point of view. Ordinary clock processes record cumulative successive {\em sojourns} of the dynamics (i.e., durations of visits to the successive states visited by the process, along its trajectory). These are most conveniently represented as a `jump function', i.e., a function which increases only by jumps, and whose jump sizes correspond to the sojourns of the process. These processes are the central objects of many aging results in the mathematical literature, since in particular  `no jump' correlations can be expressed in terms of them alone.
But our program requires more than only the jump times; we want to record also the state of the process during each sojourn. We do that in terms of another jump function, which we call (in lack of a better term) the `timeless' clock process. Let us see how it works for $k=1$. A full description will be done later on.

Let $N^1_{x_1}$, $1\leq x_1\leq M_1$, be iid Poisson rate 1 counting processes, and make
\begin{equation}\label{clocks}
	S_1(r)=\sum_{x_1=1}^{M_1}\tau^1_{x_1}N^1_{x_1}(r),\qquad C_1(r)=\sum_{x_1=1}^{M_1}\tau^1_{x_1}\sum_{j=1}^{N^1_{x_1}(r)}E_{x_1}^j,
\end{equation}
where $\{E_{x_1}^j;\,x_1,j\geq1\}$ are iid standard exponential random variables (and $\sum_{j=1} ^0\cdots=0$, by convention).
$C_1$ is the (ordinary) clock process, and $S_1$ is the timeless clock process. Notice that they are both jump functions with the same jump points. 
{The 1-level version of the BDTM is then defined as}
\begin{equation}\label{kp}
	Z_1(t)=S_1(C^{-1}_1(t))-S_1(C^{-1}_1(t)-), t\geq0,
\end{equation}
where, here and below, $C_\cdot^{-1}$ is the (right continuous) inverse of $C_\cdot$. 
We may also, when it is convenient,
look at 
{$Z_1$} 
within a finite time interval $[0,L]$, with $L=\La^{-1} E$, with $E$ a standard exponential random variable independent of all else, and $\La>0$ arbitrary, so that in~\eqref{kp} we may have $0\leq t\leq L$.
See Figure~\ref{fig:cclocks}.

\begin{figure*}[!htb]
	\centering
	\includegraphics[scale=0.5,trim=30 55 0 0,clip]{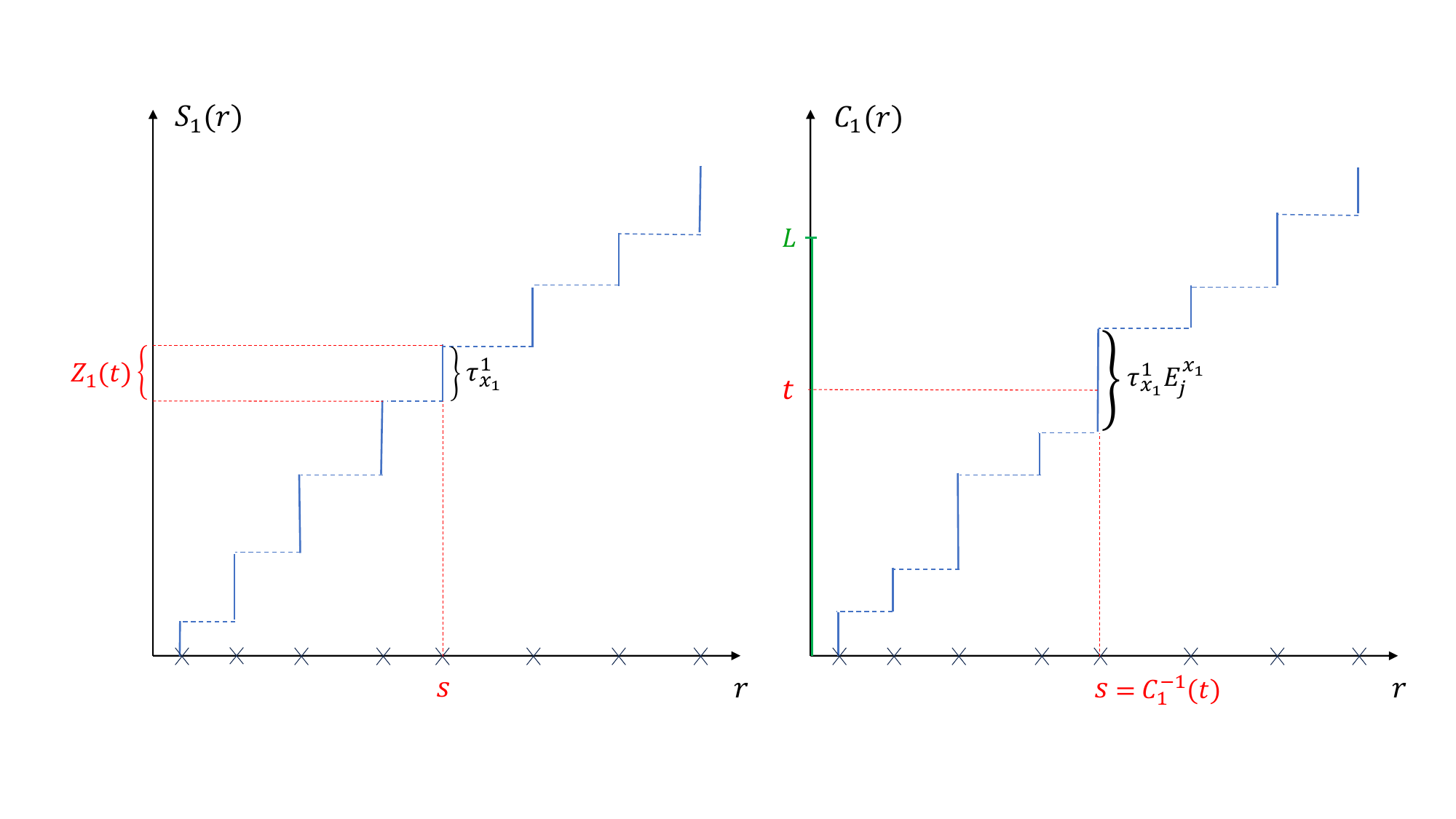}
	\caption{Descriptive illustration of $Z_1(t)$ for a given $t$, based on the timeless clock process $S_1$ and the ordinary clock process $C_1$. The crosses in the $x$-axis represent the marks of the Poisson processes $N^1_{x_1}$, $1\leq x_1\leq M_1$, and the locations of the  jumps of both processes; jump sizes are represented in full lines; in this picture, $s$ corresponds to the $j$-th mark of $N^1_{x_1}$, and we may say that $s$ is labeled $x_1$. Notice that the labels of the successive jumps are iid random variables, uniform in $\{1,\ldots, M_1\}$.}
	\label{fig:cclocks} 
\end{figure*}
Let us note that the (real) time of the process runs on the $y$-axis of the graph of $C_1$ in Figure~\ref{fig:cclocks}; the $x$-axis $r$ argument is merely auxiliary. 

We also note that we may represent $C_1$ in terms of $S_1$ and an iid family of standard exponential random variables as follows
\begin{equation}\label{clock1}
	C_1(r)=\int_0^rE_s\,dS_1(s),
\end{equation}
where $\{E_s,\,s\geq0\}$ is an iid family of standard exponential random variables, and the integral is the Lebesgue-Stieltjes integral with respect to the $S_1$ viewed as a distribution function.

We may then seek to obtain scaling limits for $Z_1$ via suitable scaling limits of $S_1$, as follows. Let $S_1^{(n)}$ be a suitable rescaling  of $S_1$, and suppose that $S_1^{(n)}$ converges suitably to a jump function $S'_1$ as $n\to\infty$. Then, as soon as we have a suitable {\em continuity theorem} at our disposal, we find that $C_1^{(n)}$ converges in distribution to $C'_1(\cdot)=\int_0^\cdot E_s\,dS'_1(s),$ as $n\to\infty$, and the correspondingly rescaled $Z_1^{(n)}$ converges in distribution as $n\to\infty$ to $Z'_1$ obtained from $S'_1$ and $C'_1$ as in~\eqref{kp}.

This program (for $k=1$) was indeed carried out along these lines in~\cite{BFGGM}, and we will adopt a similar strategy to treat the the multilevel case, by first proving a continuity theorem (as suggested above) in a more abstract setting, which we then apply to find scaling limits for the dynamics in a few regimes (as anticipated above).

Before going to the details of this approach, in the next section, let us 
provide the multilevel description of the BDTM, extending the above $k=1$ case, thus allowing for the subsequent statement of out main results by the end of the present section.

The simplicity of the BDTM (as compared to the GLTM) is that the its dynamics is {\em nested}, in the sense that we may build the process level by level, from the first one up, with the construction on a given level not affecting what has been done in lower levels. 
Indeed, if in the $k$-level dynamics, as defined at the beginning of the introduction, we ignore levels $j+1$ to $k$, for some $j<k$, and focus on the levels 1 to $j$ only, we see the $j$-level dynamics. In particular, for $j=1$, we have the 1-level dynamics as described in this subsection.
Once we have defined the level-1 dynamics, as above, let us define the level-2 dynamics (ignoring the upper levels, which, again, we may) independently within each sojourn interval of the level-1 dynamics, and similarly  to the  level-1 dynamics, as follows.

Let us look at a particular sojourn, say the one corresponding to jump $s$ in Figure~\ref{fig:cclocks}. The level-2 dynamics during that time interval follows the same pattern as that of the level-1 dynamics during the time interval of length $L$. Notice that  the sojourn interval in question has length of the same form of $L$ (i.e., it is exponentially distributed). So resetting the origin of that time interval from $C_1(s-)$ to $0$ and making $S_2(r|s)=\sum_{x_2=1}^{M_2}\tau^2_{\bar x_2}N^2_{s,x_2}(r)$, with $\{N^2_{s,x_2},\, 1\leq x_2\leq M_2\}$ iid rate 1 Poisson counting processes, independent of all else, we may define $Z'_2(t|s)=S_2(C^{-1}_2(t|s)|s)-S_2(C^{-1}_2(t|s)-|s), 0\leq t\leq\tau_1^{x_1}E_j^{x_1}$. This gives a dynamics $Z'_2(\cdot|s)$ inside the sojourn interval corresponding to $s$ (with the origin translated back to the origin). Notice it corresponds to the 2-level dynamics inside that sojourn interval as described at the introduction. In order to get a full dynamics $Z_2'$, as it should be quite clear, it is only a matter of concatenating the dynamics within the sojourn intervals in their natural order, that is, $Z'_2(t)=Z'_2(t-C_1(s-)|s),\,C_1(s-)\leq t<C_1(s)$, with s varying over the jump locations of $C_1$. 
Notice that the successive labels on the first level are uniformly distributed in $\{1,\ldots, M_1\}$ (as pointed out in Figure~\ref{fig:cclocks}), and within first level sojourn intervals, the second level labels are uniformly distributed in $\{1,\ldots, M_2\}$,  consistently with  the description of the dynamics made at the introduction.
In order to get $Z_2$, it is just a matter of mapping the $\tau_\cdot^\cdot$ values into the partial sums, according to the indication at the first paragraph of this subsection, namely $Z_2(t)=((Z_1(t))^{-1}+(Z'_2(t))^{-1})^{-1}$. That this is a natural representation of the 2-level BDTM will be argued below, but it is also a sensible representation from a technical point, as will be argued later on.

A similar procedure defines the process on subsequent levels, by constructing the level-$j+1$ dynamics within sojourn intervals of level-$j$, using the (random) parameters of level $j+1$, 
{namely, $\tau^{j+1}_{\cdot x_{j+1}}$}, independently of other such intervals, and of objects in previous levels, and then concatenating.  We end up with $\cz(t)=\cz_k(t):=(Z_1(t),\ldots,Z_k(t))$, $0\leq t\leq L$
{(or $t\geq0$), which we call the $k$-BDTM with parameter set $\{\tau^i_{\bar x_i}, 1\leq x_i\leq M_i,\,1\leq i\leq k\}$}. 

A more detailed construction follows from the discussion in the next section, where a  more general process is defined and studied. Let us next present our scaling limit results. In order to do that, we must first define the limiting dynamics which come up in the different regimes we consider.

\subsubsection{Limiting dynamics}
\label{limits}

We have two limiting dynamics for the scaling regimes we will consider. Both are defined from particular (random) jump functions in much the same way as we defined the BDTM, level by level, within a nested, cascading structure.

\paragraph*{\textbf{Cascading K process.}}

Let $\ga^1_{x_1}$, $x_1\in[0,1]$, be the pointwise increments of an $\a_1$-stable subordinator 
{\footnote{
{We recall that a subordinator is a non decreasing process whose increments are independent and stationary; an $\a$-stable subordinator have increments with an $\a$-stable distribution\cite{B99}.}
}}
%
in $[0,1]$, and let $N^1_{x_1}$,  $x_1\in[0,1]$, be iid rate 1 Poisson counting processes. 
Set
\begin{equation}\label{clocks-k}
	S_1(r)=\sum_{x_1\in[0,1]}\ga^1_{x_1}N^1_{x_1}(r),\qquad C_1(r)=\sum_{x_1\in[0,1]}\ga^1_{x_1}\sum_{j=1}^{N^1_{x_1}(r)}E^{x_1}_j,
\end{equation}
similarly as in~\eqref{clocks}, and define $Z_1$ as in~\eqref{kp}. This defines the level-1 dynamics, and 
{the behavior of the dynamics} on subsequent levels follows the nested pattern of the BDTM: it is defined within each sojourn interval of the previous level, say $j-1$, as a level-1 type dynamics, independent of all other intervals and levels, using an $\a_j$-stable subordinator  
{associated to the label of the sojourn interval}, 
instead, 
{with independence among subordinators of different levels and labels}. We call the resulting $\cz(t)=(Z_1(t),\ldots,Z_k(t))$ process as the {\em cascading K-process} with  (multi)index $\bar \a_k=(\a_1,\ldots,\a_k)$. 


\paragraph*{\textbf{Cascading aging process.}}

Let $\ga^1_{x_1}$, $x_1\in[0,\infty)$ be the increments of an $\a_1$-stable subordinator in $[0,\infty)$. Set
\begin{equation}\label{clocks-a}
	S_1(r)=\sum_{x_1\in[0,r]}\ga^1_{x_1},\qquad C_1(r)=\sum_{x_1\in[0,r]}\ga^1_{x_1}E_{x_1},
\end{equation}
similarly as in~\eqref{clocks}, and define $Z_1$ as in~\eqref{kp}. This defines the level-1 dynamics, and 
{the behavior the dynamics on subsequent levels follows the nested pattern of the BDTM, and the K process, as described above.} 
We call the resulting $\cz(t)=(Z_1(t),\ldots,Z_k(t))$ process as the {\em cascading aging process} with index $\bar \a_k$.

{
\begin{observacao}\label{rmk:ka}
	The K and the aging processes have an obvious similar cascading structure; an important difference is that while the K process visits the same states repeatedly (in each level), there is never any return to the same state in the aging process. Other distinguishing properties of these processes are discussed below.
\end{observacao}
}

\subsubsection{Scaling regimes and main results}
\label{scaling-results}

Recall that we are assuming that 
\begin{equation}\label{tau}
	\pr(\tau^j_{\bar x_j}>t)=\frac{L_j(t)}{t^{\a_j}},\,t\geq0
\end{equation}
where $L_1,\ldots,L_k$ are slowly varying at infinity, and $\a_j\in(0,1)$, $1\leq j\leq k$. For simplicity, we will assume that $\lim_{t\to\infty}L_j(t)$ exists and is a constant, which we take to be 1, again for simplicity. See a similar treatment of the general case in~\cite{H}.

In order to define our scaling regimes, we need to explain how the time and volumes diverge in relation one to the other. In two of the three regimes we consider, we adopt a {\em fine tuning} relation of the volumes among themselves, in such a way as to have the $\tau$ variables of different levels {\em scale together}  or {\em simultaneously}
{in the same way}. We make 
{\begin{equation}\label{M}
M_1:=n \,\text{ and }\, M_j=n^{\a_j/\a_1}, 2\leq j\leq k
\end{equation}
(so that $M_j^{1/\a_j}$ is constant in $j$).
}
With this choice of volumes, we have two time scalings (with respect to the the scaling parameter $n$), defining two scaling regimes, as follows.
{\footnote{
{See Remark~\ref{nft} at the end for a brief discussion of other volume scalings.}}}

\paragraph*{$\bullet$ \textbf{Ergodic time scaling.}}

Let 
{$\cz^{(n)}$ be the $k$-BDTM with parameter set $\{\tau^i_{\bar x_i}, 1\leq x_i\leq M_i,\,1\leq i\leq k\}$ as in~(\ref{tau},\ref{M}) above}. Set $\tilde\cz^{(n)}(t)=(\tilde Z_1^{(n)}(t),\ldots,\tilde Z_k^{(n)}(t))$, where $\tilde Z_j^{(n)}(t)=\frac1{n^{1/\a_1}}Z_j(n^{1/\a_1}\,t)$, $t\geq0$, $1\leq j\leq k$, $n\geq1$.

\begin{teo}\label{teo:erg}
	Under the conditions stipulated above, namely~\eqref{tau} (with the convergent $L_j$'s) and fine tuning, we have that $\tilde\cz^{(n)}(\cdot)$ converges in distribution as $n\to\infty$ to $\tilde\cz$, the cascading K-process with index $\bar \a_k$.
\end{teo}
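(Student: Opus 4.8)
The plan is to obtain Theorem~\ref{teo:erg} as an application of the continuity theorem for cascading jump evolutions proved in the next section, fed with a convergence statement for the underlying timeless clock processes of the $k$-BDTM under the ergodic rescaling. Concretely, I would proceed in three stages: (i) recognise the rescaled $k$-BDTM $\tilde\cz^{(n)}$ as a cascading jump evolution driven, at level $j$, by the value-rescaled timeless clock $S^{(n)}_j=n^{-1/\a_1}S_j$ (so that, e.g., $\tilde Z^{(n)}_1$ has exactly the form~\eqref{kp} with $S^{(n)}_1$ and $C^{(n)}_1=n^{-1/\a_1}C_1$ in place of $S_1,C_1$, while the discrete label set $\{1,\dots,M_j\}$ fills in to $[0,1]$ in the limit); (ii) prove that the $S^{(n)}_j$ converge in distribution, jointly across levels and in the topology required by the continuity theorem, to the jump functions $S_j$ driving the cascading K-process of Subsection~\ref{limits}; (iii) verify that the limiting jump functions satisfy the regularity hypotheses of the continuity theorem, and conclude by reading off the limit.

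The heart of the matter is stage (ii), where the key is a single ``one-level block'' estimate. Under fine tuning $M_j=n^{\a_j/\a_1}$, so $M_j^{1/\a_j}=n^{1/\a_1}$ for every $j$: all levels carry the same value and time rescaling $n^{1/\a_1}$. Together with~\eqref{tau} (and $L_j\equiv1$) this gives $M_j\,\pr\!\big(\tau^j_{\bar x_j}>n^{1/\a_1}u\big)\to u^{-\a_j}$ for every $u>0$, hence the rescaled point process of level-$j$ weights $\sum_{x_j}\d_{\tau^j_{\bar x_j}/n^{1/\a_1}}$ converges to a Poisson point process on $(0,\infty)$ with intensity $\propto u^{-\a_j-1}\du$, i.e.\ to the family of increments of an $\a_j$-stable subordinator on $[0,1]$ (normalised by $L_j\equiv1$). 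Equipping each weight with an independent rate-$1$ Poisson counting process to form the timeless clock, and then each of its jumps with a standard exponential to pass to the ordinary clock as in~\eqref{clock1}, and using the standard continuity of the passage from a marked point configuration to its associated nondecreasing jump function, yields $S^{(n)}_1\Rightarrow S_1$ as in~\eqref{clocks-k}; the limit has a dense set of jumps and is a.s.\ finite at every $r$ since $\a_1<1$ forces $\sum_{x_1}\ga^1_{x_1}<\infty$ a.s. The same estimate applies verbatim, conditionally, inside every sojourn interval of every lower level, because the level-$j$ weights and the Poisson and exponential marks are independent across sojourn intervals and across levels; joint convergence over the $k$ levels then follows from the marginal statements plus this independence.

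For stage (iii) one checks that the limiting clocks are a.s.\ strictly increasing, with a dense set of jumps and no mass escaping at $0$ or $\infty$, which is exactly what makes the right-continuous inverses and the compositions in~\eqref{kp}, as well as the level-by-level concatenation, act continuously at the limiting configuration --- the hypotheses of the continuity theorem. Granting that theorem, $\tilde\cz^{(n)}$ converges in distribution to the cascading jump evolution driven by the $S_j$, which by the very definition of the cascading K-process in Subsection~\ref{limits} is $\tilde\cz$ with index $\bar\a_k$.

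The main obstacle is that the maps $r\mapsto C^{-1}(r)$, $S\circ C^{-1}$ and inter-level concatenation are generically discontinuous on Skorokhod-type spaces, so plain convergence of the clocks does not by itself transfer to the evolutions; the resolution --- the mechanism already used for $k=1$ in~\cite{BFGGM} --- is that the subtraction of left limits in~\eqref{kp} extracts jumps robustly and that the limiting clocks are strictly increasing with dense jumps, which neutralises the discontinuity of inversion and composition, and this is precisely what the continuity theorem is built to package (together with the attendant tightness). A more bookkeeping-type but genuine difficulty is to carry the nested/cascading structure faithfully through the limit at all $k$ levels simultaneously --- matching rescaled level-$j$ data to the correct rescaled sojourn intervals of level $j-1$ --- and here fine tuning is exactly what aligns the intrinsic scale $M_j^{1/\a_j}$ of each level-$j$ block with the ambient scale $n^{1/\a_1}$ for all $j$ at once; this alignment is the defining feature of the ergodic regime, and the reason the limit is a (finite, well-behaved) K-type process rather than something degenerate.
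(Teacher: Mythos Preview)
Your proposal is correct and follows essentially the same route as the paper: cast $\tilde\cz^{(n)}$ as a CJE with rescaled timeless clocks $S_j^{(n)}=n^{-1/\a_1}S_j$, prove $S_j^{(n)}\Rightarrow \tilde S_j$ (the paper simply invokes a classical stable limit theorem rather than your point-process formulation, but these are equivalent here), then feed this into the continuity theorem. The one step you leave implicit, and which the paper spells out, is that the continuity theorem (Theorem~\ref{PrinciTeor}) is stated for \emph{deterministic} convergence $\ss_k^{(n)}\to\ss_k$, so one must pass through Skorohod Representation to upgrade the distributional convergence of the clocks to almost sure convergence before applying it; this is routine but worth making explicit.
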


The distribution here is the joint distribution of the environment random variables $\tau_\cdot^\cdot$ and subordinators, and the dynamical random variables, namely the Poisson processes and standard exponentials. The topology (here and elsewhere) is the $J_1$ Skorohod topology.

The terminology for this scaling regime comes from the fact that the cascading K-process is an ergodic Markov process (with~\eqref{eqbd_inf}, or an extension of it to more levels, as the equilibrium measure. We do not go into these details, but the issue can be addressed similarly as in~\cite{Gav}\footnote{ where, as already mentioned above, the GLTM was analyzed (in the specific ergodic regime, with fine tuning); \eqref{eqsn_inf}, or an extension of it to more levels, is the equilibrium of the limiting rescaled dynamics}.

\paragraph*{$\bullet$ \textbf{Polynomial aging time scaling.}}

Let 
{$\cz^{(n)}$ be the $k$-BDTM as for Theorem~\ref{teo:erg}}, and let  $\b\in(0,1/\a_1)$ be a fixed parameter. Set $\hat\cz^{(n)}(t)=(\hat Z_1^{(n)}(t),\ldots,\hat Z_k^{(n)}(t))$, where $\hat Z_j^{(n)}(t)=\frac1{n^\b} Z_j(n^\b\,t)$, $t\geq0$, $1\leq j\leq k$, $n\geq1$.

\begin{teo}\label{teo:pserg}
	Under the conditions of Theorem~\ref{teo:erg}, we have that for almost every realization of the environmental random variables $\tau_\cdot^\cdot$, $\hat\cz^{(n)}(\cdot)$ converges in distribution  as $n\to\infty$ to $\hat\cz$, the cascading aging process with index $\bar \a_k$.
\end{teo}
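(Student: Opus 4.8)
The plan is to follow the same template as for Theorem~\ref{teo:erg}: reduce everything to a scaling limit statement for the level-1 timeless clock process $S_1^{(n)}$, invoke the continuity theorem (announced earlier in the excerpt, to be proved in the next section) to transfer that convergence to the clock processes $C_j^{(n)}$ and hence to $\hat\cz^{(n)}$, and then run the cascading induction on levels. The difference from the ergodic case is twofold: the time is scaled by $n^\b$ with $\b<1/\a_1$ rather than by $n^{1/\a_1}$, which is a \emph{subcritical} scaling relative to the equilibration time scale, and the statement is \emph{quenched} in the environment $\tau_\cdot^\cdot$. So the first step is: fix a realization of the $\tau_\cdot^\cdot$, and show that the rescaled level-1 timeless clock $S_1^{(n)}(r)=n^{-\b}\sum_{x_1=1}^{M_1}\tau^1_{x_1}N^1_{x_1}(n^\b r)$ converges in distribution (with respect to the dynamical randomness only) to $S_1(r)=\sum_{x_1\in[0,r]}\ga^1_{x_1}$, the pure-jump part of an $\a_1$-stable subordinator run over $[0,r]$. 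Here $M_1=n$, and since $\b<1/\a_1$ the time horizon $n^\b$ is small compared to the number $n$ of traps, so each trap is visited $O(n^{\b}/n)=o(1)$ times on average; the sum over $x_1$ is therefore, to leading order, a sum over the traps that get visited \emph{at least once}, each contributing essentially $\tau^1_{x_1}$ times the number of visits, and the classical heavy-tail/extremal argument (as in \cite{BFGGM,FIN}) gives convergence to the stable subordinator increments. The quenched nature is handled exactly as in \cite{FIN,BFGGM}: the limit subordinator is built from the a.s.\ point process of rescaled $\tau$ values (which converges a.s.\ to a Poisson point process with the $\a_1$-stable intensity), and conditionally on that point process the Poisson clocks produce the subordinator; one shows the finite-dimensional distributions and tightness of $S_1^{(n)}$ hold for a.e.\ $\tau_\cdot^\cdot$.

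The second step is to feed this into the continuity theorem to obtain joint convergence of $(S_1^{(n)},C_1^{(n)})$ to $(S_1,C_1)$ with $C_1(r)=\int_0^r E_s\,dS_1(s)=\sum_{x_1\in[0,r]}\ga^1_{x_1}E_{x_1}$, using the representation~\eqref{clock1}, and hence convergence of $\hat Z_1^{(n)}$ to $Z_1$ as in~\eqref{kp}; this is the level-1 base case, and it matches the level-1 definition of the cascading aging process in~\eqref{clocks-a}. The third step is the cascading induction. Given the convergence on levels $1,\dots,j-1$, one looks inside a generic level-$(j-1)$ sojourn interval: conditionally on its (limiting, exponentially distributed) length $\ell$, the level-$j$ dynamics restricted to it is, after recentering, a level-1 type dynamics run for time $\ell$; but crucially the \emph{time scale} here is the native $O(1)$ scale of the level-$j$ sojourns in the rescaled picture — and with fine tuning $M_j=n^{\a_j/\a_1}$ the same subcritical relation $n^\b \ll M_j^{1/\a_j}=n^{1/\a_1}$ persists, so the level-$j$ timeless clock inside such an interval converges to the pure-jump part of an $\a_j$-stable subordinator run over $[0,\ell]$, exactly as in~\eqref{clocks-a} applied at level $j$. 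Concatenating over the sojourn intervals (which, by the continuity theorem, converge jointly with their ordering and lengths) and passing the $\tau_\cdot^\cdot$-mapping $Z_j(t)=\big((Z_{j-1}(t))^{-1}+(Z'_j(t))^{-1}\big)^{-1}$ through the limit (a continuous operation away from the zero set, which is handled by the usual argument that the minimum of the two jump functions has a.s.\ no common jumps / no ties at continuity points) yields the claimed convergence of the full vector $\hat\cz^{(n)}$ to the cascading aging process.

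The main obstacle I expect is the base-case scaling limit of $S_1^{(n)}$ in the \emph{subcritical, quenched} regime: unlike the ergodic scaling $n^{1/\a_1}$, where every trap is visited $\Theta(1)$ times and the law of large numbers kicks in inside each trap, here the relevant traps are visited $o(1)$ times on average, so the contribution of a visited trap is genuinely random (a trap contributing $\tau^1_{x_1}\cdot(\text{number of visits})$, the number of visits being a small-parameter Poisson count), and one must show that (i) the deep traps — those with $\tau^1_{x_1}$ of order $n^{\b/\a_1}$ — are the only ones contributing, (ii) their rescaled depths form a point process converging a.s.\ to the $\a_1$-stable Poisson intensity, and (iii) the resulting sum of (depth)$\times$(independent small-parameter Poisson) converges to the stable subordinator rather than to something else; points (ii)–(iii) are where the quenched vs.\ annealed distinction really bites, and the cleanest route is to cite and adapt the corresponding quenched clock-process convergence already established for the $k=1$ aging regime in \cite{BFGGM} (and the $\Z^d$ analogues in \cite{FIN}), checking that the only input needed — a.s.\ convergence of the empirical point process of the $\tau^1_\cdot$ — holds under~\eqref{tau} with the convergent slowly varying function. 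Everything downstream (the continuity theorem application and the cascading induction) is then essentially the same bookkeeping as in the proof of Theorem~\ref{teo:erg}, with $n^{1/\a_1}$ replaced by $n^\b$ and "for a.e.\ environment" inserted throughout.
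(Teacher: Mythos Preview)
Your overall architecture --- reduce to a quenched scaling limit for the level-$1$ timeless clock, invoke the continuity theorem, and cascade inductively --- matches the paper exactly, and your handling of the upper levels is fine. The gap is in the base case: the rescaled timeless clock you wrote down,
\[
S_1^{(n)}(r)=n^{-\b}\sum_{x_1=1}^{n}\tau^1_{x_1}N^1_{x_1}(n^{\b}r),
\]
does not converge. The auxiliary argument $r$ of $S_1$ is \emph{not} real time (real time is the \emph{value} of the clock $C_1$), so there is no reason for the factor $n^{\b}$ to appear inside the Poisson counts; and with each $N^1_{x_1}$ of rate $1$ (not rate $1/n$), the expected number of visits per trap in your formula is $n^{\b}r$, not $n^{\b}/n$ as you asserted, and the whole sum blows up like $n^{1/\a_1}r$ after dividing by $n^{\b}$. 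Your heuristic ``each trap is visited $o(1)$ times'' is therefore inconsistent with the object you defined.

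The missing idea is that, by the invariance of the CJE under rescaling of the auxiliary argument (the fourth item of Remark~\ref{X-process}), you are free to choose \emph{any} rescaling of $r$; the paper takes $r\mapsto n^{-\chi_1}r$ with $\chi_1=1-\a_1\b>0$, i.e.
\[
S_1^{(n)}(r)=\frac1{n^{\b}}\sum_{x_1=1}^{n}\tau^1_{x_1}N^1_{x_1}\big(n^{-(1-\a_1\b)}r\big).
\]
Now each trap is indeed visited $O(n^{-(1-\a_1\b)})=o(1)$ times, about $n^{\a_1\b}$ traps are visited in total, and $(n^{\a_1\b})^{1/\a_1}=n^{\b}$ is exactly the right normalization for a stable limit. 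More to the point, this $S_1^{(n)}$ is itself a L\'evy subordinator in $r$, so the quenched convergence reduces to almost sure convergence of its Laplace exponent
\[
\varphi^{(n)}(\theta)=n^{-(1-\a_1\b)}\sum_{i=1}^{n}\big(1-e^{-\theta\tau_i/n^{\b}}\big)\longrightarrow\mathrm{const}\cdot\theta^{\a_1},
\]
which the paper establishes (Lemma~\ref{lpserg}) by splitting $\{1,\dots,n\}$ into $n^{1-\a_1\b}$ blocks of size $n^{\a_1\b}$ and applying a triangular-array strong law. This Laplace-exponent route is cleaner than the point-process/extreme-value argument you sketched, though your approach could presumably also be pushed through once the rescaling is corrected.
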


The distribution here is the conditional distribution of $\cz^{(n)}$ given $\tau_\cdot^\cdot$.

\medskip

Our last scaling regime does not require fine tuning.

\paragraph*{$\bullet$ \textbf{Order 1 aging time scaling.}}

Let 
Let $\cz$ be the 
{$k$-BDTM with parameter set $\{\tau^i_{\bar x_i}, 1\leq x_i\leq M_i,\,1\leq i\leq k\}$ as in~(\ref{tau}) above, with $M_1,\ldots,M_k$ not necessarily in fine tuning.} Set $\check\cz^{(m)}(t)=(\check Z_1^{(m)}(t),\ldots,\check Z_k^{(m)}(t))$, where $\check Z_j^{(m)}(t)=\frac1{m}Z_j(m\,t)$, $t\geq0$, $1\leq j\leq k$,  $m\geq1$.

\begin{teo}\label{teo:1serg}
	For almost every realization of the environmental random variables $\tau_\cdot^\cdot$, $\check\cz^{(m)}(\cdot)$ converges in distribution  as first $M_1,\ldots,M_k\to\infty$ (in any way), and then $m\to\infty$,  to $\hat\cz$.
\end{teo}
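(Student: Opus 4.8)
The plan is to factor the iterated limit through a well-defined \emph{infinite-volume BDTM} and to feed each of the two stages into the continuity theorem for cascading jump evolutions. For the first stage, let $\cz^\infty=(Z^\infty_1,\dots,Z^\infty_k)$ be the process obtained by the same nested recipe as $\cz$ (that is, as in~\eqref{kp} level by level), but with the level-$1$ clocks~\eqref{clocks} replaced by the pair $S_1^\infty(r)=\sum_{i\le N^\infty(r)}\tau^1_i$ and $C_1^\infty(r)=\sum_{i\le N^\infty(r)}\tau^1_i E_i$, where $N^\infty$ is a rate-$1$ Poisson counting process, the $\tau^1_i$ are iid with $\pr(\tau^1_1>t)=t^{-\a_1}$, and the $E_i$ are iid standard exponentials, and with the analogous $\a_j$-tailed replacement on level $j$, taken independently within each sojourn interval of level $j-1$ and across levels. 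I would prove that $\cz$ converges in distribution (in $J_1$) to $\cz^\infty$ as $M_1,\dots,M_k\to\infty$ in any manner, by verifying the hypotheses of the continuity theorem. The key is that the auxiliary $r$-argument of the clocks may be reparametrized without altering $\cz$, since each $Z_j$ depends on its clocks only through $S_j\circ C_j^{-1}$ and its left limits; after rescaling the level-$1$ $r$-argument in~\eqref{clocks} by $1/M_1$, the superposition $\sum_{x_1}N^1_{x_1}(\cdot/M_1)$ is \emph{exactly} a rate-$1$ Poisson process for every $M_1$, the $i$-th of its marks carries the label $L_i$ of a uniformly chosen site, and on the asymptotically full event that $L_1,\dots,L_n$ are distinct the jump sizes $\tau^1_{L_i}$ form a sample without replacement from $\{\tau^1_{x_1}:x_1\le M_1\}$. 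For almost every realization of the environment the empirical law $\frac1{M_1}\sum_{x_1}\delta_{\tau^1_{x_1}}$ converges to the $\a_1$-tailed law, so the reparametrized level-$1$ clocks converge to $(S_1^\infty,C_1^\infty)$; propagating this through the (finitely many) level-$1$ sojourn intervals by induction on $j$ gives the convergence of all the clocks, hence $\cz\Rightarrow\cz^\infty$ by the continuity theorem. The manner in which the $M_j$ diverge is immaterial because each level's clocks stabilize as $M_j\to\infty$ uniformly in the remaining volumes.

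For the second stage, note that $\check\cz^{(m)}$ is $\cz$ rescaled by the deterministic factor $m$ in space and time, so the first stage already gives, for almost every environment, $\check\cz^{(m)}\Rightarrow\check\cz^{(m),\infty}$ as $M\to\infty$, where $\check\cz^{(m),\infty}(t):=\frac1m\cz^\infty(mt)$; the reciprocal-sum bookkeeping of the construction survives this rescaling because $\frac1m(a^{-1}+b^{-1})^{-1}=\big((a/m)^{-1}+(b/m)^{-1}\big)^{-1}$. It then remains to let $m\to\infty$ in $\check\cz^{(m),\infty}$, which no longer depends on the environment, and this is a classical heavy-tailed scaling limit: reparametrizing the level-$1$ $r$-argument by $m^{\a_1}r$, the functional limit theorem for compound Poisson sums with $\a_1$-tailed jumps gives that $\big(\frac1m S_1^\infty(m^{\a_1}r),\frac1m C_1^\infty(m^{\a_1}r)\big)$ converges in $J_1$ to a pair of $\a_1$-stable subordinators $(S_1',C_1')$ with $C_1'(r)=\int_0^r E_s\,dS_1'(s)$, which is precisely the level-$1$ ingredient~\eqref{clocks-a} of the cascading aging process; the analogous statement on level $j$ (with $m^{\a_j}$, inside each sojourn interval) supplies the $\a_j$-stable inputs. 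Feeding these into the continuity theorem once more yields $\check\cz^{(m),\infty}\Rightarrow\hat\cz$, the cascading aging process with index $\bar\a_k$, and chaining the two stages proves the theorem.

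I expect the first stage to be the main obstacle, and within it two points deserve care. The first is a non-degeneracy estimate ruling out that, as $M\to\infty$, the dynamics escapes instantaneously into the increasingly abundant shallow traps; concretely, the number of sojourns in a bounded real-time window must be tight in $M$, which follows from a bound of the shape $\sum_{i\le n}\tau^1_i E_i\ge\ve\sum_{i\le n}E_i\,\mathbbm 1\{\tau^1_i\ge\ve\}$ together with $\liminf_M\pr(\tau^1_{L_1}\ge\ve)>0$, and from its propagation through the levels. The second is checking that the continuity theorem accommodates a limiting clock $S_1^\infty$ whose jump locations are themselves random (driven by $N^\infty$) rather than deterministic, an issue absent in the fine-tuned regimes of Theorems~\ref{teo:erg} and~\ref{teo:pserg}. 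An alternative to the second stage that avoids a second invocation of the continuity theorem would be to couple $\cz^\infty$ with the finite-volume BDTM at fine-tuned volumes $M_1=N$, $M_j=N^{\a_j/\a_1}$ so that the two processes agree on $[0,mT]$ with probability tending to $1$ as $N\to\infty$, read off $\frac1m\cz^\infty(m\,\cdot)\Rightarrow\hat\cz$ along $m=N^\b$ with $\b\in(0,1/\a_1)$ directly from Theorem~\ref{teo:pserg}, and then remove the subsequence restriction by a regular-variation argument; I would nonetheless present the direct route above, since it runs parallel to the proofs of the other two theorems.
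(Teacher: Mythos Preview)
Your two-stage plan matches the paper's strategy exactly: first pass to an infinite-volume BDTM by sending $M_1,\dots,M_k\to\infty$, then take the $m$-scaling limit by a classical heavy-tailed functional limit theorem, with both stages fed into the continuity theorem (Theorem~\ref{PrinciTeor}). The one substantive difference is in how the intermediate process is parametrized. You retain the Poisson structure, so your limiting level-$j$ jump function $S_j^\infty$ has jumps at Poisson marks; the paper instead exploits item~(4) of Remark~\ref{X-process} (invariance of the trajectory under reparametrization of the auxiliary $r$-variable) to place the intermediate jump functions at the \emph{integers}, namely $\check S_j(r|\bar s_{j-1})=\sum_{\ell=1}^{\lfloor r\rfloor}\tau^j_{\ell\bar s_{j-1}}$. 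This makes the intermediate CFJF discrete in the sense of Definition~\ref{Sdisordense}, with deterministic jump locations and a trivially verified gap condition, and it dissolves both concerns you flag: there is no issue of random limiting jump locations to accommodate, and tightness of the sojourn count is automatic since on any bounded $r$-interval there are only finitely many integers. The first-stage convergence then reduces to the elementary fact that uniform sampling with replacement $(\tau_{U_1},\tau_{U_2},\dots)$ from $\{\tau_1,\dots,\tau_M\}$ converges in law, for almost every environment, to iid sampling $(\tau_1,\tau_2,\dots)$---isolated as a short lemma in the paper. The second stage you describe (compound sums with $\a_j$-tails converging to $\a_j$-stable subordinators, level by level), and the direct route rather than the coupling-to-Theorem~\ref{teo:pserg} alternative, is precisely what the paper carries out.
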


Notice that the limits in Theorems~\ref{teo:pserg} and~\ref{teo:1serg} are the same.

\begin{observacao}\label{rmk:o1}
	We believe that the limit of Theorem~\ref{teo:1serg} is the one taken in~\cite{BD} (for the aging functions integrated with respect to the distribution of $\tau_\cdot^\cdot$).
	One might say that this is the physical regime, since under experimental conditions volumes should perhaps be considered infinite before taking the time limit. 
	We are not aware of it having been considered before in the mathematics literature for these kinds of models and issues; the
	scaling regimes appearing in Theorems~\ref{teo:erg} and~\ref{teo:pserg} are on the other hand quite common (but see~\cite{AG, BGS}, where shorter time scales are shown to give rise to extremal processes in the limit, rather than subordinators, for $p$-spin dynamics).
\end{observacao}

\begin{observacao}\label{rmk:sn}
	We go back to a comment made at the end of Subsection~\ref{ssec:sn} to point out that in both limiting dynamics $\tilde\cz$ and $\hat\cz$, almost surely, coordinates change at higher levels more easily than at lower levels. We may see changes at higher levels without any change at lower levels, but in order that a lower coordinate of the dynamics changes, the higher ones must change as well.
    {(This is due to the fact that in those processes, almost surely, for every sojourn interval of a given level, we find (infinitely many) jumps of the next level subordinator. In contrast, in finite volume, say the $2$-BDTM, we will find a sojourn interval of the first level with no jump of the associated second level clock process, so that the only jump on this interval occurs at the end, with a jump at the first level.)}
\end{observacao}

\begin{observacao}\label{rmk:inf_levels}
	This is a good point to comment on an issue raised in Subsection~\ref{previous}. There is no particular reason to stop the construction of  $\tilde\cz$ and $\hat\cz$ at a fixed level $k$. We may continue the inductive procedure, level by level, to indefinitely many levels, and thus obtain infinite level dynamics $(\tilde Z_1(t), \tilde Z_2(t),\ldots)$ and $(\hat Z_1(t), \hat Z_2(t),\ldots)$ (provided all the $\a_j$'s are in $(0,1)$).
	
	We note that the mere definition of a meaningful notion of an infinite volume, infinite level dynamics for the GLTM, as undertaken in~\cite{FP21}, required nontrivial, extensive considerations; the same can be said for the $k\to\infty$ limit taken in~\cite{GG15}.
\end{observacao}

\subsubsection{Aging results}
\label{aging results}

The latter two theorems, as well as the scaling properties of stable subordinators, allow for obtaining aging results in the latter two scaling regimes, as follows.

Following the terminology of~\cite{BD}, for $1\leq j\leq k$, and $t_w,t>0$, let 
\begin{equation}\label{pi}
	\Pi_j(t,t_w)=\pr(\text{no jump of }Z_j \text{ during }[t_w,t_w+t]).
\end{equation}

As corollaries of Theorems~\ref{teo:pserg} and \ref{teo:1serg}, under their respective conditions, we have that for almost every realization of  $\tau_\cdot^\cdot$
\begin{widetext}
\begin{align}\label{pip}
	\Pi_j(n^\b t,n^\b t_w)&=\pr(\text{no jump of }\hat Z^{(n)}_j \text{ during }[t_w,t_w+t])\to f_j(t/t_w)\,\text{ as } n\to\infty,\\ 
	\Pi_j(m t,m t_w)&=\pr(\text{no jump of }\check Z^{(m)}_j \text{ during }[t_w,t_w+t])\to  f_j(t/t_w)
	\label{pi1}
\end{align}
\end{widetext}
as first $M_1,\ldots,M_k\to\infty$, and then $m\to\infty$, where 
\begin{equation}\label{pii}
	f_j(\cdot)=\pr(\text{no jump of }\hat Z_j \text{ during }[1,1+\cdot]).
\end{equation}

From the nested structure of $\check \cz$ and scaling properties of subordinators, it follows that $f_j(\cdot)=\bar F_j(\cdot/(1+\cdot))$, where $\bar F_j=1-F_j$,  and $F_j$ is the distribution function of $\prod_{i=1}^jB_i$, with $B_1,\ldots,B_k$ independent random variables, $B_i\sim$ Beta$(1-\a_i,\a_i)$. This agrees with the aging results of~\cite{BD} (in Section B), as far as  conditions are comparable. We discuss this in more detail  in Appendix~\ref{app3}.

\begin{observacao}\label{rmk:j1_cont}
	It is also worth remarking that the $J_1$ topology plays a crucial for establishing the convergences in~\eqref{pip} and~\eqref{pi1} as corollaries of Theorems~\ref{teo:pserg} and~\ref{teo:1serg}, respectively, although this is not immediate, since the 
    {indicator function of the} event in the probability in~\eqref{pii} is almost surely discontinuous for $\hat\cz$ 
    {\footnote{
    {Given a trajectory in such an event, one may approximate it by trajectories in path space which do not are not constant in the given interval, and are thus not in the event.}}}.  We discuss this point in more detail in  Appendix~\ref{app3}, along with other ways of signalling the aging phenomenon in more direct ways from Theorems~\ref{teo:pserg} and~\ref{teo:1serg}.
	%
\end{observacao}

\begin{observacao}\label{rmk:order}
	Going back to a comment at the beginning of Subsection~\ref{previous}, it is perhaps intriguing that~\cite{BD} imposes an ordering condition on the $\a_j$'s ---see Section B---, which is  not absolutely needed for our results, and neither seemingly for the computations of~\cite{BD}; the limiting aging functions are certainly well defined without that.
	Does it originate out of a physicality/naturalness concern, as the one raised in~\cite{SN} (and explained at the end of Subsection~\ref{ssec:sn})? 
\end{observacao}




%

\section{Continuity theorem for cascading jump evolutions}
\label{contcas}

As anticipated above, we will prove Theorems~\ref{teo:erg},~\ref{teo:pserg},~\ref{teo:1serg} via a continuity theorem for an abstract type of process including our process and its rescaled versions. From the discussion above, it is natural to start with jump functions as  the timeless clocks out of which we build ordinary clocks, and then our process, as in~(\ref{clocks}, \ref{kp}), within a nested, cascading structure. 

\subsection{Cascading families of jump functions} \label{cfjf}

Let  $\R_+=[0,\infty)$ and  $D(\R_+)$ be the space of càdlàg real trajectories from $\R_+$ to $\R_+$ equipped with the  $J_1$ Skorohod topology.  Let  $\G:=\{S:[0,\infty)\to[0,\infty)\}$ a collection of functions in $D(\R_+)$ such that $S(0)=0$, $S$ is non-decreasing, $S(s)<\infty$ for all $s\in (0,\infty)$
and  $S(s)\to\infty$ as $s\to\infty$.

For $S$ in $ {\G}$, let
\begin{align}\label{TauS}
	\T_S&:=\{s\geq 0: S(s)-S(s-)=:\gamma(s)>0\}\quad \text{and}\nonumber \\
	\hat{\T_S}&:=\{\gamma(s): s \in \T_S\}
\end{align}
denote the set of jump( location)s and jump sizes of $S$, respectively, and let
\begin{equation}\label{hatG}
	\hat{\G}:=\{S\in\G: S(r)=\sum_{s\leq r}\gamma(s),\quad r\geq 0\}.
\end{equation}
We call a function $S$ in $ \hat{\G}$  a {\it jump function}.

\begin{defin}\label{Sdisordense}
	We say that a jump function  $S$ is {\em dense}  if   $\T_S$ is dense in $[0,\infty)$,  and  {\em discrete} if $\T_S$ is discrete (i.e.,  has no limit points). 
\end{defin}

\begin{defin}\label{defCasFunct}
	Given $k\geq 1$, we define a ($k$-level) {\em cascading family of jump function} as follows. 
	We pick $S_1\in\hat{\G}$, and let $\ct_1 =\T_{S_1}$.  
	For $1\leq j<k$, having defined 
	$\ct_j$, and for each $\bar{s}_j\in\ct_j$, we pick $S_{j+1}(\cdot| \bar{s}_j)\in\hat{\G}$, and let 
	$\ct_{j+1}=\bigcup_{\bar s_j\in\ct_j}\{\bar s_j\}\times\T_{S_{j+1}(\cdot| \bar s_j)}$.

	For $1\leq j\leq k$, we call the 
	family  $\ss_j=\{S_i(\cdot| \bar s_{i-1});\,\bar s_{i-1}\in\ct_{i-1},\,1\leq i\leq j\}$ a ($j$-level) {\em  cascading family of jump  functions} ($j$-CFJF);
	we say that $\ss_j$ is {\em dense} (respectively, {\em discrete}) if all the  functions in it are dense (respectively, discrete); 
	and $\ct_j=\ct_j(\ss_j)$ is called the {\em jump tree} associated to $\ss_j$. 
\end{defin}

Given a $k$-CFJF $\ss_k$, we write for short
\begin{align}
	\T_j(\bar s_{j-1})&=\T_{S_j(\cdot| \bar s_{j-1})},\quad \text{and}\\
	\hat{\T}_j(\bar s_{j-1})&=
	\{\gamma_j(s_j|\bar{s}_{j-1}), s_j\in \T_j(\bar{s}_{j-1})\},
\end{align}
for $\bar s_{j-1}\in\ct_{j-1}$, $j=1,\ldots, k$, where $\ct_0=\{\emptyset\}$,  $S_1(\cdot| \emptyset)=S_1$, $\T_1(\emptyset)=\T_1$, $\hat\T_1(\emptyset)=\hat\T_1$.

We will also use the following notation. For $j=1,\ldots,k$, given  $s_j\in\T_{j}(\bar{s}_{j-1})$, $\bar{s}_{j-1}\in\ct_{j-1}$, 
set
\begin{align}
	\lambda_{j}(s_j|\bar{s}_{j-1})&:= (\gamma_{j}(s_j|\bar{s}_{j-1}))^{-1}\;,\nonumber
    \\
    \Lambda_{j}(s_j|\bar{s}_{j-1})&:= \sum_{i=1}^j\lambda_{i}(s_i|\bar{s}_{i-1})\;, \nonumber
    \\
	 \Ga_{j}(s_j|\bar{s}_{j-1})&:=
      \big(\Lambda_{j}(s_j|\bar{s}_{j-1})\big)^{-1}.\label{deflambda}
\end{align}

\subsection{Cascading jump evolutions} \label{cje}

Given a $k$-CFJF $\ss_k$, 
we define processes $X_1,\ldots,X_k$ inductively as follows.  
Let $\{E_{j}(s|\bar{s}_{j-1}); 1\leq j\leq k, s\in\R_+, \bar s_{j-1}\in\ct_{j-1}\}$ 
be a family of  iid~standard exponential random variables,
and for $1\leq j\leq k$ and $\bar s_{j-1}\in\ct_{j-1}$, define
\begin{equation}\label{K}
	K_{j}(r|\bar s_{j-1})=\sum_{s\in\T_j(\bar s_{j-1})\cap [0,r]}\gamma_{j}(s|\bar s_{j-1})E_j(s|\bar s_{j-1}),\,r\geq0.
\end{equation}
Notice that $K_\cdot(\cdot|\cdot)\in\hat{\G}$ a.s.,  and $\T_{K_{j}(\cdot|\bar s_{j-1})}=\T_j(\bar s_{j-1})$ (but $\hat{\T}_{K_{j}(\cdot|\bar s_{j-1})}\neq \hat{\T}_j(\bar s_{j-1})$).  

Let $C_1=K_1=K_1(\cdot|\emptyset)$, and make
\begin{equation}\label{x1}
	X_{1}(t)=S_1(C_1^{-1}(t))-S_1(C_1^{-1}(t)-),\,t\geq0.
\end{equation}
For $t\geq0$, $X_1(t)$ equals  either a jump size of $S_1$ in $\hat{\T}_1$, corresponding to a jump in $\T_1$ which also is a jump of $C_1$,  or 0.  $X_1$ is a process whose trajectory visits the jumps of $S_1$ "in order", spending an exponential time of mean $\gamma_1(s_1)$ at $s_1\in\T_1 $.

For a given $s_1\in\T_1$, we call the interval $[C_1(s_1-),C_1(s_1))$ the {\em constancy interval of $X_1$ associated to $s_1$},  so  we have that $X_1(t)=\gamma_1(s_1)$ for  $t\in [C_1(s_1-),C_1(s_1))$. We call an interval $I$ a {\em constancy interval of} $X_1$({\em tout court}) if it is a constancy interval of $X_1$ associated to some $s_1\in\T_1$.

\begin{observacao}\label{Lebesgue-Cantor}
	It may be readily checked that  the union of all  constancy intervals of $X_1$ (over all $s_1\in \T_1$) has full Lebesgue measure, but may not equal $\R_+=[0,\infty)$. It may happen,  as in  our applications, later on in this text,  that the complement is a Cantor-like set.
\end{observacao}

Proceeding inductively, and assuming  that $X_i$ is defined for $i=1,\ldots,j-1$,  $2\leq j\leq k$, given a constancy interval $I=[a,b)$ of $X_{j-1}$, for some $a<b$, associated to some 
$\bar{s}_{j-1}\in\ct_{j-1}$, 
for $r\geq 0$, let
\begin{equation}\label{clock}
	C_{j}(r|\bar{s}_{j-1})=a+  K_{j}(r|\bar{s}_{j-1}),             
\end{equation}
and make  
\begin{equation}
	X_{j}(t)=S_j(C_{j}^{-1}(t|\bar{s}_{j-1})|\bar{s}_{j-1})-S_j(C_{j}^{-1}(t|\bar{s}_{j-1})-|\bar{s}_{j-1}),\,t\in I.
\end{equation}

Given a constancy interval of $X_{j-1}$, $I$, associated to $\bar{s}_{j-1}\in\ct_{j-1}$, the constancy interval of $X_j$ associated to 
$\bar{s}_j$ such that ${s}_j\in{\T}_j(\bar{s}_{j-1})$  is the interval 
\begin{equation}\label{constancyintervalj}
	[C_j(s_j-|\bar{s}_{j-1}),C_j(s_j|\bar{s}_{j-1}))\cap I,
\end{equation}
provided the latter intersection is not empty; in the latter interval, $X_j$ equals $\ga_j(s_j|\bar s_{j-1})$.

\begin{observacao}\label{X-process}
	\begin{itemize}
		\item[(1)] The processes $X_1,X_2,\ldots,X_k$ have a nested structure: given $1\leq j<k$ and a constancy interval of $X_j$, $I=[a,b)$, associated to $\bar{s}_j$, the evolution of $X_{j+1}$ in $I$ is determined solely by the {\em partition} of $I$ induced by the range of $C_{j+1}(\cdot|\bar{s}_j)$ in $I$,  namely,  the constancy intervals of $X_{j+1}$ within $I$, and the value of $X_{j+1}$ in each of them are determined solely by $C_{j+1}(\cdot|\bar{s}_j)$ and $S_{j+1}(\cdot|\bar{s}_j)$ (equivalently, by $S_{j+1}(\cdot|\bar{s}_j)$ and the family of iid  standard exponential random variables $E_{j+1}(\cdot|\bar{s}_j)$).  Since distinct constancy intervals of $X_j$ correspond to distinct $\bar{s}_j$'s, and for distinct $\bar{s}_j$'s  the families $E_{j+1}(\cdot|\bar{s}_j)$) are independent, we have that the evolution of $X_{j+1}$ inside distinct constancy intervals of $X_j$ are independent.
		
		\item[(2)] We may then consider the evolution of $X_{j+1}$ inside each constancy interval of $X_j$ separately (and independently),  and  concatenate them,  following the (lexicographic) order of the $\bar{s}_j$'s.
		
		\item[(3)] Given that the intersection in
		(\ref{constancyintervalj})  must be non-empty,  not every $\bar{s}_j\in\ct_j$ corresponds to a constancy interval of $X_j$  if $j\geq 2$. If we consider the evolution of $X_1$ in a finite interval $I$, then, by the same reason, not every $s_1\in\ct_1$ will correspond to a constancy interval of $X_1$ (inside $I$).
		
		\item[(4)] For any $i=1,\ldots,k$ the trajectory of $X_i$ remains unchanged if we rescale the argument of $S_i(\cdot|\cdot)$, namely, if we replace $S_i(\cdot|\cdot)$ by $S'_i(\cdot|\cdot)$, where $S'_i(r|\cdot)=S_i(\xi r|\cdot)$, with $\xi>0$ any scaling parameter.
	\end{itemize}
\end{observacao}

We call $\cX_k:=(X_1,\ldots,X_k)$ the {\em cascading jump evolution induced by}  $\ss_k$, denoted by CJE($\ss_k$).

$C_\cdot(\cdot|\cdot)$ defined in the sentence right above~\eqref{x1} and in~\eqref{clock} are called the {\em clock processes} associated to $\cX_k$.
And $\cK_k=\cK(\ss_k):=\{K_j(\cdot|\bar s_{j-1}),\,\bar s_j\in\ct_j,\,1\leq j\leq k\}$ are called the (family of) {\em clock parts} associated to $\ss_k$.
By the third item in Remark~\ref{X-process}, not every clock part enters a clock process.

\smallskip

The jump sizes of the functions of $\ss_k$ play the role of  state space,  and are related to the jump rates of $\cX_k$.
We will shortly adopt an alternative representation of $\cX_k$,  prompted by an elementary property of exponential distributions to be discussed in the next subsubsection, where the latter relationship is direct. 

\subsubsection{Alternative representation} \label{alt}

{One reason to consider an alternative representation for $\cX_k$ is that, while the jump sizes of the jump functions $S_\cdot(\cdot|\cdot)$ give the jump rates of the clock processes entering the description of $\cX_k$,
the actual jump rates of say $X_2$ within a(n exponentially distributed) sojourn interval of $X_1$ are affected by the size of that interval (since, in contrast to the corresponding clock process $C_2(\cdot|\cdot)$, $X_2$ jumps at the end of that interval, thus incrementing those rates in comparison). We will thus use the actual jump rates of $\cX_k$ to represent it. This is a perhaps more natural alternative, which has the technical advantage of yielding smallness of the coordinate at a certain level as soon as we have smallness of a previous level, even when the jump size of $S_\cdot(\cdot,\cdot)$ at the former level is not small. We will come back to the latter point in Remark~\ref{small} below.
}

{The building block for the new representation is} the following general result for the exponential distribution.
\begin{lema}\label{LemmaExercicio}
	Let $S\in\hat{\G}$ and consider $\hat{\T}_S=\{\gamma(s), \, s\in\T_S\}$,  the jump sizes of $S$. 
	Let $\{E,E_s; s\in\T_S\}$ be iid ~standard exponential random variables.  Let $\Lambda\in (0,\infty)$,  and  set $L=\Lambda^{-1}E$.  For $ r>0$, let 
	\begin{equation}
		K(r)=
		\sum_{s\in[0, r]}\E_s,\quad \text{with}\quad \E_s= \gamma(s)E_s,
	\end{equation}
	and make
	$R=\inf\{r>0:K(r)\geq L\}$.
	Then
	\begin{itemize}
		\item[(a)] $R\in\T_S$ almost surely;
		\item[(b)] Let  $\E=\{\E_s, s<R;\,\hat{\E}_R;\,\tilde{\E}_s, s>R\}$, where
		$\hat{\E}_R=L-K(R-)$,
		%
		and $\tilde{\E}:=\{\tilde{\E}_s,s\in\T_S\}$ is an iid ~family of random variables such that $\tilde{\E}_s\sim \exp(\La+{\lambda}(s))$,  
		with $\lambda(s)=(\gamma(s))^{-1}$, $s\in\T_S$.
		Then $\E\sim\tilde{\E}$;
		\item[(c)]
		Moreover, $\E$ and $R$ are independent, and 
		\begin{equation}\label{R}
			\pr(R>r)=\prod_{s\leq r}\frac{\l(s)}{\Lambda+\l(s)},\quad r\geq 0.
		\end{equation}
		
		\item[(d)] Let $\check{\mathcal{E}}_R$ denote $K(R)-L$;
		then, given $R$, $\check{\mathcal{E}}_R\sim \exp(\lambda(R))$ independently of $\hat{\E}_R$.
	\end{itemize}
\end{lema}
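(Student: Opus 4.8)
\textbf{Proof plan for Lemma~\ref{LemmaExercicio}.}

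The plan is to build everything on the single elementary observation that if $\E_s=\gamma(s)E_s$ with $E_s$ standard exponential, then $\E_s\sim\exp(\l(s))$, and that independent exponentials compete in the familiar way: the minimum of $\exp(\mu_1),\dots$ is $\exp(\sum\mu_i)$, the argmin is chosen proportionally, and after conditioning on a given coordinate being the minimum, the \emph{overshoots} of the others are again $\exp(\mu_i)$ by the memorylessness property. I would first set up the bookkeeping: $K$ is a jump function in $\hat\G$ with the same jump set $\T_S$ as $S$, and $R=\inf\{r:K(r)\ge L\}$ is the location of the jump of $K$ that first carries $K$ past the independent level $L=\Lambda^{-1}E$, where $L\sim\exp(\Lambda)$ independently of everything. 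Since $S(r)\to\infty$ and hence $K(r)\to\infty$ a.s. (the jump set is infinite with summable-at-finite-$r$ but divergent jump contributions — more precisely, $K(r)\to\infty$ because $S(r)\to\infty$ and the $E_s$ are i.i.d.\ positive), and since $L<\infty$ a.s., the infimum is attained and $K(R)>K(R-)$, i.e.\ $R$ is genuinely a jump point; this gives part~(a). One small point to be careful about: $K$ need not be strictly increasing between jumps (it is constant between jumps), so "first passage" happens exactly at a jump, which is what makes $R\in\T_S$ clean rather than merely a.s.

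For parts~(b) and~(c) the cleanest route is to think of the whole picture as a race. Put $\mu=\Lambda$ for the ``clock'' $L$ and $\l(s)$ for each site $s\in\T_S$, and realize $\E_s=\gamma(s)E_s\sim\exp(\l(s))$ and $L\sim\exp(\Lambda)$; but the geometry here is sequential along $r$, not a simultaneous minimum, so I would instead argue by a discrete-to-continuum or direct computation: condition successively on the jump points $s_1<s_2<\cdots$ of $\T_S$ in $[0,r]$ (using that $\T_S$ restricted to any finite interval is at most countable and, in the discrete case, finite; in the dense case one passes to the limit over finite sub-collections), and note that $\{R>s_i\}$ is exactly the event $\{L>K(s_i)\}=\{L>\sum_{j\le i}\E_{s_j}\}$, whose conditional probability given $\E_{s_1},\dots,\E_{s_{i-1}}$ and $\{R>s_{i-1}\}$ is, by memorylessness of $L\sim\exp(\Lambda)$, equal to $\pr(\exp(\Lambda)>\E_{s_i})=\l(s_i)/(\Lambda+\l(s_i))$, and moreover on this event the ``used up'' variable $\E_{s_i}$ has, conditionally, the law of $\exp(\Lambda)$-minimum-wins, i.e.\ $\exp(\Lambda+\l(s_i))$ — that is precisely $\tilde\E_{s_i}$. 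Multiplying these conditional factors gives~\eqref{R} for $r$ ranging over jump points (hence for all $r$ by right-continuity/monotonicity), establishes that $\{\E_s:s<R\}$ has the claimed law $\{\tilde\E_s\}$ conditionally on $R$, that these are independent of $R$, and that $\{\E_s:s>R\}$ — being untouched by the race, which stops at $R$ — retains its original i.i.d.\ $\exp(\l(s))$ law; wait, that is not what~(b) asserts, so here I must be careful: the point of~(b) is that after \emph{resampling} the post-$R$ coordinates to be $\tilde\E_s\sim\exp(\Lambda+\l(s))$, the full vector $\E$ (with $\hat\E_R=L-K(R-)$ in the middle) has the \emph{same law} as the i.i.d.\ $\tilde\E$ family; the correct statement is that $\E\sim\tilde\E$ as \emph{laws} (not that the original post-$R$ variables equal $\tilde\E_s$), and this follows because (i) pre-$R$: shown above; (ii) the middle term $\hat\E_R=L-K(R-)$ is, by memorylessness of $L$ given $\{L>K(R-)\}$, distributed as $\exp(\Lambda)$ conditioned to be at most the fresh $\E_R\sim\exp(\l(R))$... — I will organize this as: conditionally on $R=s$, $(\hat\E_R,\check\E_R):=(L-K(R-),\,K(R)-L)=(\hat\E_s,\,\E_s-\hat\E_s)$ where $\E_s\sim\exp(\l(s))$ and, given $\E_s$, $\hat\E_s$ is uniform on $[0,\E_s]$ scaled appropriately — no, by the exponential race, given the minimum wins at $s$, the pair (amount of the $L$-clock consumed, overshoot of $\E_s$) is a standard split, and in particular $\hat\E_R\sim\exp(\Lambda+\l(s))=\tilde\E_s$ in law. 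Assembling (i)+(ii)+(the resampled tail) yields $\E\sim\tilde\E$, which is~(b); the independence of $\E$ and $R$ plus~\eqref{R} is~(c).

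For part~(d), condition on $R=s$. We have $L\sim\exp(\Lambda)$ and $\E_s=\gamma(s)E_s\sim\exp(\l(s))$ independent, and we are on the event $\{K(R-)<L<K(R-)+\E_s\}$, i.e.\ writing $\hat\E_R=L-K(R-)$ and $\check\E_R=K(R)-L=\E_s-\hat\E_R$, we are decomposing an $\exp(\l(s))$ variable $\E_s$ at an independent point determined by the memoryless $L$: by the standard fact that if $A\sim\exp(a)$, $B\sim\exp(b)$ are independent then, conditionally on $\{B<A\}$, the pair $(B,A-B)$ has independent coordinates with $B\sim\exp(a+b)$ and $A-B\sim\exp(a)$, we get immediately that given $R=s$, $\hat\E_R\sim\exp(\Lambda+\l(s))$ and $\check\E_R\sim\exp(\l(s))$ are independent — which is~(d) (and re-confirms the $\hat\E_R\sim\tilde\E_s$ claim used in~(b)).

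\textbf{Main obstacle.} The only real subtlety is handling the \emph{dense} case: $\T_S$ may be dense in $[0,\infty)$, so "conditioning successively on the jump points in increasing order" is not literally an enumeration. I would handle this by first proving everything for a finite or discrete sub-collection — equivalently, truncating to jumps of size $\ge\delta$, which is always finite on compacts — obtaining the product formula~\eqref{R} and the conditional laws for the truncated race, and then passing $\delta\downarrow0$: the truncated first-passage times decrease to $R$, the finite products converge to the infinite product in~\eqref{R} (which is positive since $\sum_s \Lambda/\l(s)=\Lambda\sum_s\gamma(s)=\Lambda S(r)<\infty$, so $\prod_s \l(s)/(\Lambda+\l(s))>0$), and the distributional identities pass to the limit by monotone/dominated convergence. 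The rest is genuinely routine exponential-distribution algebra.
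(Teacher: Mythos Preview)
Your approach is essentially correct and takes a more probabilistic route than the paper. Where the paper establishes (b)--(c) by computing the joint probability $\pr(R=r,\E_{s_1}>t_1,\dots,\E_{s_k}>t_k,\hat\E_r>t)$ via a direct multivariable integration (collecting the ``other'' pre-$r$ contributions into a single variable $W=\sum_{s<r,\,s\ne s_i}\E_s$ and integrating against $f_W$, $f_{\E_{s_i}}$, $f_L$), you instead run a sequential memorylessness argument: at each jump $s_i$, condition on $\{R>s_{i-1}\}$, use that $L-K(s_{i-1})\sim\exp(\Lambda)$ is fresh, and apply the two-exponential race fact to get both the factor $\l(s_i)/(\Lambda+\l(s_i))$ and the conditional law $\E_{s_i}\sim\exp(\Lambda+\l(s_i))$. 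This is the same computation reorganised, and arguably more transparent; your truncate-and-pass-to-the-limit scheme for the dense case is a legitimate alternative to the paper's direct handling (the paper's integral already makes sense for countable dense $\T_S$, since $W$ is just a nonnegative random variable with a computable Laplace transform). Part~(d) is identical in both approaches.

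There is, however, a genuine gap in your argument for~(a) when $\T_S$ is dense. You assert that ``$K$ is constant between jumps, so first passage happens exactly at a jump, which is what makes $R\in\T_S$ clean rather than merely a.s.''. In the dense case there are no intervals of constancy, and nothing you have written excludes $K(R-)=K(R)=L$ at a continuity point $R\notin\T_S$. The paper's fix is short and worth adopting: the range $\fR$ of $K$ has Lebesgue measure zero (since $K$ is a pure-jump function, the complement of $\fR$ in $[0,K(r)]$ is the disjoint union of the open jump intervals $(K(s-),K(s))$, whose total length is exactly $K(r)$), and $L$ has a density and is independent of $\fR$, so $\pr(L\in\fR)=0$. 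Note also that your truncation device does not rescue~(a): $R_\delta\downarrow R$ with each $R_\delta\in\T_S$ does not force $R\in\T_S$, as $\T_S$ need not be closed. So patch~(a) with the measure-zero-range argument; the rest of your plan then goes through.
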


This is possibly well known; certainly when $S$ is discrete, but the general case works just as well; we provide a justification in Appendix~\ref{app1}, just in case.

\paragraph{Alternative construction of $\cX_k$.} \mbox{}
It is enough to obtain the labeled collection of lengths of constancy intervals of $X_1$ associated to each jump of $S_1$, for the first level, and then, inductively, for level $2\leq j\leq k$, within each constancy interval of $X_{j-1}$, say the one associated to $\bar s_{j-1}$,  to obtain the labeled collection of lengths of constancy intervals of $X_j$ associated to each jump of $S_j(\cdot|\bar s_{j-1})$. With the labeled collections of lengths in hands,  we inductively obtain, level by level, the constancy intervals of a given level $j$ within a constancy interval of the previous levels, say associated to $\bar s_{j-1}$, by  concatenation, following the order of the jumps of $S_j(\cdot|\bar s_{j-1})$. 

To give an explicit (alternative) construction, we start with  a precise description of the collections of jumps/labels entering each stage outlined in the previous paragraph. This is provided by Lemma \ref{LemmaExercicio}, as follows. By first taking $S=S_1$ in the lemma, we get $R_1$ distributed as in~\eqref{R}, and make $\cR_1=\{s\in\T_1: s\leq R_1\}$; these are the level-1 jumps of the process (in $[0,L]$). Proceeding inductively for $j\geq2$, given $\bar s_{i}\in\cR_{i}(\bar s_{i-1})$, $1\leq i\leq j-1$ ---with $\cR_{1}(\bar s_{0}):=\cR_{1}$---, we get from the lemma, with $S_j(\cdot|\bar s_{j-1})$ playing the role of $S$, $R_j(\bar s_{j-1})$ distributed as in~\eqref{R}, and make $\cR_j(\bar s_{j-1})=\{s\in\T_j(\bar s_{j-1}): s\leq R_j(\bar s_{j-1})\}$. We thus obtain the (cascading) family of jump labels 
$\bar\cR_1:=\cR_1, \bar\cR_2:=\bigcup_{s_1\in\bar\cR_1}\{s_1\}\times\cR_2(s_1),\ldots,
\bar\cR_k:=\bigcup_{\bar s_{k-1}\in\bar\cR_{k-1}}\{\bar s_{k-1}\}\times\cR_k(\bar s_{k-1})$.

Finally, in order to obtain the lengths of the constancy intervals of $\cX_k$, we proceed backwards. Let $E^k_{\bar s_k},\,\bar s_k\in\cR_k$, be an iid family of standard exponential random variables. The length of the constancy interval of $X_k$ associated to $\bar s_k\in\cR_k$ is given by
$\E^k_{\bar s_k}:=(\La+\La_k(s_k|\bar s_{k-1}))^{-1}E^k_{\bar s_k}$, in consistence  with Lemma  \ref{LemmaExercicio}.
For $j=2,\ldots,k$, and $\bar s_j\in\bar\cR_j$, having obtained the length of the constancy intervals of $X_j$ associated to $\bar s_j$, we have
that the length of the constancy interval of $X_{j-1}$ associated to $\bar s_{j-1}\in\bar\cR_{j-1}$ is given by 
$\E^{j-1}_{\bar s_{j-1}}:=\sum_{s_j\in\cR_j(\bar s_{j-1}) }\E^{j}_{\bar s_{j}}$, and we are done. 

It follows from Lemma \ref{LemmaExercicio} that $\E^{j}_{\bar s_{j}}\sim\exp(\La+\La_j(s_j|\bar s_{j-1}))$ for each $1\leq j\leq k$ and 
$\bar s_j\in\bar\cR_j$.

We can extend the alternative construction for all time (and not just $t\leq L$)  by repeating the above construction, but starting with $\bar\cR_1=\T_1$ instead, and then proceeding in the exact same way, thus obtaining the lengths of all the constancy intervals of $X_j$, $1\leq j\leq k$, along their full trajectory. In this case, we have that $\E^{j}_{\bar s_{j}}\sim\exp(\La_k(s_j|\bar s_{j-1}))$ for each $1\leq j\leq k$ and 
$\bar s_j\in\bar\cR_j$. This prompts the following alternative representation of $\cX_k$, which we adopt from now on:
\begin{equation}\label{Z}
	\cz_k=(Z_1,\ldots,Z_k),\,\text{ where }\, Z_j(t)=\big(\sum_{i=1}^jX_i(t)\big)^{-1},\,1\leq j\leq k.
\end{equation}
Note that for $1\leq j\leq k$, while in a constancy interval associated to $\bar s_j\in\bar\cR_j$ of either $X_j$ or $Z_j$ (it is the same interval), the mean waiting time until a jump occurs in either process is given by the value of $Z_j$, so it is in this sense natural to use $Z_j$ rather than $X_j$. Another reason for that is that the alternative representation turns out to be convenient for technical reasons, as will become clear in the next subsection.
Notice that there is a one-to-one correspondence between $\cz_k$ and $\cX_k$.

Also for technical reasons, it is convenient to consider the process in a finite exponential time interval $L$ as above, and that is what we will do below. In this case, we have that, while in a constancy interval of $Z_j$ associated to $\bar s_j\in\bar\cR_j$, $Z_j=\Ga_{j}(s_j|\bar{s}_{j-1})$, even though the mean waiting time until a jump occurs in $Z_j$ equals $\big[\Lambda+\big(\Ga_{j}(s_j|\bar{s}_{j-1})\big)^{-1}\big]^{-1}$ 
---recall the notation introduced in~\eqref{deflambda}.

\begin{observacao}\label{Pi}
	From the points made above, 
	we conclude that the left hand side of~\eqref{pip} equals
	\begin{equation}\label{age_lim}
		\Er(e^{-t/\hat Z^{(n)}_j(t_w)})	\to \Er(e^{-t/\hat Z_j(t_w)})	\,\text{ as }\,n\to\infty,
	\end{equation}
	for almost every realization of $\tau_\cdot^\cdot$, 
	where the convergence follows immediately from Theorem~\ref{teo:pserg}, and a similar conclusion holds for the left hand side of~\eqref{pi1} , with the latter expectation also showing up in the (double) limit, this time by an invocation of Theorem~\ref{teo:1serg}. 
	
	From the same points, we get that that expectation equals the expression in~\eqref{pi}. Now, well known scaling properties of $\a$-stable subordinators with $\a\in(0,1)$ imply self-similarity  of $\cz_k$, out of which it readily follows that the same expectation is a function of the quotient $t/t_w$. It in turn follows that the function is given by~\eqref{pii}, and other well known properties of stable subordinators and independence yield a more explicit expression for that function, as indicated in the paragraph below~\eqref{pii}. We return to this in more detail in  Appendix~\ref{app3}.	
\end{observacao}
{\begin{observacao}\label{small}
From~\eqref{Z}, the last point of the first paragraph of this subsubsection follows: as soon as $Z_i(\cdot)$ is small for some $i$, we have that $Z_j(\cdot)$ is at least as small for $j>i$. This will come in handy in dealing with convergence in Skorohod space. The alternative construction explained above, enabling the neat separation of jump labels and lengths of sojourn/constancy intervals, allowing for the backward procedure explained above, will also help in that respect.
\end{observacao}}

\subsection{Continuity theorem} \label{contheo}

For $k\geq 1$ and  $n\in\N$, let $~\ss_k^{(n)}$ be  a sequence of $k$-CFJF's  and  let   $\cz_k^{(n)}$ denote the Cascading Jump Evolution induced by $~\ss_k^{(n)}$.
We will prove a weak convergence result for $\tilde{Z}_k^{(n)}$ under the $J_1$ Skorohod topology on $D(\R_+^k)$
assuming $~\ss_k^{(n)}$ converges suitably to a $k$-CFJF.

It turns out that in the application of our abstract convergence theorem to the BDTM,  only dense or discrete  $~\ss_k$'s show up, so, for simplicity,  we will consider these cases only. 

Let $k\geq 1$ be fixed,  and let  $~\ss_k$ and $~\ss_k^{(n)}$, $n\in\N$,  be $k$-CFJF's and assume $~\ss_k$ is either dense or discrete.  
We  introduce a notion of convergence of  $\ss_k^{(n)}$ to$~\ss_k$ 
as follows.  We start by requiring  that 
\begin{equation}\label{cond1}
	S_1^{(n)}\to S_1,~~\text{on }~~ D(\R_+)~~ \text{as}\quad n\to\infty.
\end{equation} 

\begin{observacao}\label{ObserSaltos}
	A characteristic property of the $J_1$ topology (see \cite{WW}, Chapter 3, Section 3.3) is that if $x\in D$ has a jump at $t$, i.e., if $x(t)\neq x(t-)$ and if $d_{J_1}(x_n,x)\to 0$ in $D$, then there exists a sequence $(t_n)$ such $t_n\to t$, $x_n(t_n)\to x(t)$ and $x_n(t_n)-x_n(t_n-)\to x(t)-x(t-)$ as $n\to\infty$.
	In this context, we refer to $(t_n)$ as a $(x_n,x)$-{\em matching sequence} (or simply matching sequence, when the prefix is clear) to $t$.
\end{observacao}

\paragraph{Gap condition.} Additionally, if $S_1$ {\em is discrete}, we also require the following {\em gap condition} on the convergence in~\eqref{cond1}. 
Given $S\in\hat\G$, by a {\em gap} of $\T_S$, we mean any interval $(r,s)$ such that $r,t\in\T_S$, $r<t$, and $(r,t)\cap \T_S=\emptyset$. 
In the case where $S_1$ is discrete in~\eqref{cond1}, and that $(r,s)$ is a gap of $\T_1$, we require that, given two matching sequences $(r_n)$ and $(t_n)$ to $r$ and $t$ respectively, we have that for all  $n$ large enough $(r_n, t_n)\cap \T^{(n)}_1=\emptyset$, where  $\T^{(n)}_1:=\T_{S_1^{(n)}}$.

For the next levels ---assuming $k\geq2$---, we require that for each $\bar s_{k-1}\in\ct_{k-1}$ and $n\geq1$, 
there exists $\bar s^{(n)}_{k-1}=\bar s^{(n)}_{k-1}(\bar s_{k-1})\in\ct^{(n)}_{k-1}$, 
such that
\begin{itemize}
	\item  $s^{(n)}_{j}(\bar r_{k-1})=s^{(n)}_{j}(\bar t_{k-1})$ whenever $\bar r_{k-1},\,\bar t_{k-1}\in\ct_{k-1}$ are such that 
	$\bar r_j=\bar t_j$ for some $1\leq j<k$; 
	\item  For all $1\leq j<k$, we have that $(s^{(n)}_j)$ is an 
	$(S_{j}^{(n)}(\cdot|\bar{s}_{j-1}^{(n)}),S_{j}(\cdot|\bar{s}_{j-1}))$-matching sequences to $s_j$, 
	and 
	\begin{equation}\label{3bul}
		S_{j+1}^{(n)}(\cdot|\bar{s}_j^{(n)})\to S_{j+1}(\cdot|\bar{s}_j)\text{  as } n\to\infty;
	\end{equation}
	\item In the discrete case, we additionally require the gap condition on the latter convergence.
\end{itemize}

Under these conditions, we say that
\begin{equation}\label{conv}
	\ss_k^{(n)}\to \ss_k\text{  as } n\to\infty.
\end{equation}

\begin{teo}\label{PrinciTeor}
	Let $\ss_k^{(n)}$, $n\geq 1$, and $\ss_k$ be CFJF's,  with $\ss_k$ either dense or discrete,  such that $\ss_k^{(n)}\to\ss_k$ as $n\to\infty$, as above.  Let $\cz_k^{(n)}$ and $\cz_k$ be the Cascading Jump Evolutions  induced  by $\ss_k^{(n)}$ and $\ss_k$, respectively. Then $\cz_k^{(n)}\rightarrow \cz_k$ in distribution in  Skorohod space as $n\to\infty$.
\end{teo}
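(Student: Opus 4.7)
The plan is to induct on the number of levels $k$, using the alternative construction of $\cz_k$ from Section~\ref{alt} together with a continuous-mapping/matching-sequence argument. Throughout I work on a finite exponential horizon $L=\Lambda^{-1}E$ as in the alternative representation, and extend to $\R_+$ at the end by letting $\Lambda\downarrow 0$.

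Base case $k=1$. Couple the iid standard exponentials $\{E_s\}$ across $n$ (they do not depend on $n$). Then $C_1^{(n)}$ and $S_1^{(n)}$ share jump locations, and the jump sizes of $C_1^{(n)}$ are $\gamma_1^{(n)}(s) E_s$, so the $J_1$-convergence $S_1^{(n)}\to S_1$ of~\eqref{cond1} (reinforced by the gap condition in the discrete case) transfers to $C_1^{(n)}\to C_1$ in $J_1$. To obtain $Z_1^{(n)}(t)\to Z_1(t)$ at fixed $t$, note that almost surely $t$ lies strictly inside a jump-image interval $(C_1(s-),C_1(s))$ for some $s\in\T_1$; by Remark~\ref{ObserSaltos} there exists a matching sequence $s^{(n)}\to s$ with $t\in (C_1^{(n)}(s^{(n)}-),C_1^{(n)}(s^{(n)}))$ for all large $n$, whence $Z_1^{(n)}(t)=\gamma_1^{(n)}(s^{(n)})\to \gamma_1(s)=Z_1(t)$. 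In the discrete case the gap condition rules out spurious jumps of $C_1^{(n)}$ inside limit gaps, so no extra constancy intervals appear in the limit. Finite-dimensional convergence plus monotonicity-based tightness of the clocks upgrades this to $\cz_1^{(n)}\Rightarrow\cz_1$ in Skorohod space.

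Inductive step. Assume the theorem up to level $k-1$. The hypothesis~\eqref{conv} supplies consistent matching sequences $\bar s_j^{(n)}\to\bar s_j$, so the endpoints $C_{j-1}^{(n)}(s_{j-1}^{(n)}\pm|\bar s_{j-2}^{(n)})$ of the relevant constancy intervals of $Z_{j-1}^{(n)}$ converge to those of $Z_{j-1}$ for each $j\leq k$. By Remark~\ref{X-process}(1)--(2), within the constancy interval of $Z_{k-1}$ associated to $\bar s_{k-1}$ the process $Z_k$ is a level-$1$ type evolution built from $S_k(\cdot|\bar s_{k-1})$ and the independent family $E_k(\cdot|\bar s_{k-1})$, and the same holds for $Z_k^{(n)}$ on the matched interval. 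The hypothesis $S_k^{(n)}(\cdot|\bar s_{k-1}^{(n)})\to S_k(\cdot|\bar s_{k-1})$ from~\eqref{3bul} (with the gap condition when discrete) plus the base case, applied conditionally on the lower-level environment and on the length of the matched interval, give Skorohod convergence of $Z_k^{(n)}$ to $Z_k$ on each matched piece jointly with the inductive conclusion for levels $1,\ldots,k-1$. On the horizon $L$, only finitely many constancy intervals at each of the levels $1,\ldots,k-1$ are active (Remark~\ref{X-process}(3)), so the concatenation across these intervals, with matching endpoints and matched evolutions on each piece, preserves $J_1$-convergence of the full vector $\cz_k^{(n)}$ to $\cz_k$.

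The main obstacle I anticipate is handling the dense case, where the jump points of each $S_j$ accumulate and the constancy intervals interleave in a Cantor-like pattern (Remark~\ref{Lebesgue-Cantor}); there is no finite ``natural'' decomposition of the trajectory. The restriction to a random exponential horizon $L$ is what makes the scheme workable: Lemma~\ref{LemmaExercicio} and Remark~\ref{X-process}(3) together guarantee that almost surely only finitely many clock parts are ever consulted before $L$, reducing any potentially infinite concatenation to a finite one and letting a $J_1$-convergence of each matched piece aggregate to a $J_1$-convergence of the whole. A secondary technical point is that the inversion map $C\mapsto C^{-1}$ is not continuous at non-strictly-increasing $C$, and our clocks are jump functions; however $S\circ C^{-1}$ only ever samples $S$ at jump points of $C$, so the difficulty is bypassed by working directly with matched pairs $(s^{(n)},s)$ as above, never with $C^{-1}$ as an element of Skorohod space. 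The extension from $[0,L]$ to $\R_+$ is then a routine diagonal argument in $\Lambda\downarrow 0$, using that the restriction of $\cz_k$ to any fixed $[0,T]$ is determined with probability approaching $1$ by the truncated construction.
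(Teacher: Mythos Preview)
Your proposal has a genuine gap in the dense case that undermines both the base case and the inductive step.

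You claim that ``only finitely many constancy intervals at each of the levels $1,\ldots,k-1$ are active'' on the horizon $L$, citing Remark~\ref{X-process}(3), and later that Lemma~\ref{LemmaExercicio} and that remark ``guarantee that almost surely only finitely many clock parts are ever consulted before $L$.'' This is false when $\ss_k$ is dense. Lemma~\ref{LemmaExercicio} gives $R_1\in\T_1$ and $\cR_1=\{s\in\T_1:s\le R_1\}$, but since $\T_1$ is dense in $[0,\infty)$, $\cR_1$ is almost surely infinite, and so are all the $\bar\cR_j$. Remark~\ref{X-process}(3) only says that some $\bar s_j$'s fail to yield a constancy interval, not that only finitely many succeed. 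Hence there is no finite concatenation to appeal to, and your inductive step collapses: you would be gluing infinitely many pieces, with no uniform control on the $J_1$ modulus across them. Your base case has the same defect: ``finite-dimensional convergence plus monotonicity-based tightness of the clocks'' does not yield $J_1$-tightness of $Z_1^{(n)}$, which is not monotone and whose constancy intervals are infinitely many and accumulate.

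The paper's proof addresses exactly this. It does \emph{not} try to control all constancy intervals at once; instead it truncates by jump size, keeping only the finitely many $s$ with $\gamma_j(s|\cdot)>a_\ell$ (the sets $\cQ_j$). Lemmas~\ref{LemaPre1}--\ref{LemaPre2} and Corollary~\ref{Corotolema2} match these large jumps between $\cz_k^{(n)}$ and $\cz_k$ and show the associated constancy-interval endpoints are $\varepsilon$-close. The key point you are missing---and the reason the $\cz_k$ representation (rather than $\cX_k$) is used---is Remark~\ref{rep}: outside the matched large-jump intervals, both $Z_j^{(n)}$ and $Z_j$ are automatically bounded by $a_\ell$, so the sup-norm error there is at most $a_\ell$ without any further matching. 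A diagonal argument in $\ell$ then closes the dense case. Your scheme has no analogue of this smallness-off-the-matched-set estimate, and without it there is no way to pass from pointwise (or piecewise) convergence to $J_1$ convergence in the dense case. (A smaller issue: ``couple the iid standard exponentials $\{E_s\}$ across $n$'' is not well-defined as stated, since the index sets $\T_1^{(n)}$ vary with $n$; the paper's Subsubsection~\ref{SecCoupling} spends real effort setting up this coupling via matching sequences.)
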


The weak nature of the convergence in  Theorem  \ref{PrinciTeor} comes from the fact that the families of exponential random variables used in the construction of $\tilde{Z}_k^{(n)}$ and $\tilde{Z}_k$ are not necessarily coupled. We find it convenient to prove a stronger result, and for  this reason we will take a coupled assignment of exponential random variables to the matched jumps of $\ss_k^{(n)}$ and $\ss_k$, so that we obtain indeed convergence in probability in the coupled space. 

We proceed to the coupling setup next.

\subsubsection{Coupling of the exponential random variables of $\cK_k$ and $\cK_k^{(n)}$}\label{SecCoupling}
We will couple the standard exponential random variables in the clock parts associated to $\ss_k$ and $\ss_k^{(n)}$, recall the notation introduced in the second paragraph below Remark~\ref{X-process}, and set $\cK_k^{(n)}=\cK(\ss_k^{(n)})$.

Two independent families of iid standard exponential random variables will come into play, namely
\begin{align}
	\fe_j&=\{E_j(s_j|\bar s_{j-1}),\,\bar s_j \in\ct_j\},\,1\leq j\leq k\; ;
    \\
	\fen_j&=\{E_j^{(n)}(s_j|\bar s_{j-1}),\,\bar s_j \in\ct^{(n)}_j\},\,1\leq j\leq k,\,n\geq1.
\end{align}
We use the first family in $\cK_k$, as described in~\eqref{K}, and in $\cK_k^{(n)}$ we will use some exponential variables of the first family, and the remainder variables from the second family, as explained in detail below.

Set $\check\T_k=\{\ga_j(s_j|\bar s_{j-1}),\,\bar s_j\in\ct_j,1\leq j\leq k\}$.
For $n\geq1$, let  $\ct_k^{(n)}:=\ct_k(\ss_k^{(n)})$, and set 
$\check\T_k^{(n)}=\{\ga^{(n)}_j(s^{(n)}_j|\bar s^{(n)}_{j-1}),\,\bar s^{(n)}_j\in\ct^{(n)}_j,1\leq j\leq k\}$ ---recall Definition~\ref{defCasFunct}.

Since $\ct_k$, $\ct_k^{(n)}$, 
$n\geq1$, are countable,  we may find a strictly decreasing sequence of real numbers $(a_l)_{l\geq 1}$ such that $a_l\to 0$ as $l\to\infty$ and 
\begin{equation}\label{a}
	\left\{	\bigcup_{n}\check{\T}_k^{(n)}\cup \check{\T}_k\right\}\cap \{a_l,l\geq 1\}=\emptyset
\end{equation}

We now denote the  set of {\em compounded matching sequences} guaranteeing the convergence in~\eqref{conv} 
as follows. Let
\begin{widetext}
\begin{equation*}
	\vec\ct_k=\bigcup_{\bar s_{k-1}\in\ct_{k-1}}\big\{(\bar s_{k-1}^{(n)}) \text{ satisfying the three bullet items of the paragraph of~\eqref{3bul}}\big\}.
\end{equation*}
\end{widetext}
{Let us note that in each subset of the above union there may be several (even infinitely many) compounded matching sequences $(\bar s_{k-1}^{(n)})$ for each $\bar s_{k-1}\in\ct_{k-1}$. However, for the coupling argument we undertake in this subsubsection, and its subsequent application to our convergence results, it is convenient to deal with a single compounded matching sequence for each $\bar s_{k-1}\in\ct_{k-1}$. We thus pick,}
for each $\bar s_{k-1}\in\ct_{k-1}$, a {\em single} corresponding compounded matching sequence 
$(\bar t_{k-1}^{(n)})$ to $\bar s_{k-1}$ from $\vec\ct_k$, and let $\vtp$ 
denote the collection of such compounded matching sequences, so that
\begin{widetext}
\begin{equation*}
	\vtp=\bigcup_{\bar s_{k-1}\in\ct_{k-1}}\big\{(\bar t_{k-1}^{(n)}) \text{ satisfying the three bullet items of the paragraph of~\eqref{3bul}}\big\},
\end{equation*}
\end{widetext}
{so that we now have each subset in the above union containing a single element}. In order to complete the compounded matching sequence with a $k$-th coordinate, we pick, for each 
$\bar s_k$, $(\bar t_{k-1}^{(n)})=(\bar t_{k-1}^{(n)}(\bar s_{k-1}))\in\vtp$, and choose a single $(S_{k}^{(n)}(\cdot|\bar{t}_{j-1}^{(n)}),S_{k}(\cdot|\bar{s}_{k-1}))$-matching sequence $(t_{k}^{(n)})$ to $s_k$, thus obtaining 
$(\bar t_{k}^{(n)})=(\bar t_{k}^{(n)}(\bar s_{k}))$. 
We let
\begin{equation}\label{vtpp}
	\vtpp=\bigcup_{\bar s_{k}\in\ct_{k}}\big\{(\bar t_{k}^{(n)})\} .
\end{equation}
{We again have each subset in the above union containing a single element}.
\begin{observacao}\label{cons}
	For each $n\geq1$, $1\leq j\leq k$, and $\bar s_k\in\ct_k$,  the first $j$ coordinates of the $n$-th entry of $(\bar t_{k}^{(n)})$, namely 
	$\bar t_{j}^{(n)}(\bar s_k)$ depends only $\bar s_j$, that is, $\bar t_{j}^{(n)}(\bar s_k)=\bar t_{j}^{(n)}(\bar s_j)$. This follows from the first bullet item
	in the paragraph of~\eqref{3bul}.
\end{observacao}

\begin{observacao}\label{just}
	Again, taking single choices of compounded matching sequences as in~\eqref{vtpp} is convenient for the coupling procedure we describe below, as well as in the application to the BDTM in the next section, where, since the convergences in that context hold almost surely (after we, resorting to Skorohod Representation, move to a suitable probability space), it facilitates  control of exceptional events.
\end{observacao}

Given $\bar s_k\in\ct_k$, we define $\T_{j,l}(\bar s_{j-1})=\{r\in\T_j(\bar s_{j-1}):\,\Gamma_j(r|\bar s_{j-1})\in (a_l,a_{l-1})\}$, $1\leq j\leq k$, $l\geq 1$, a set that may be enumerated in increasing order as $\T_{j,l}(\bar s_{j-1})=\{r_1(j,l)<r_2(j,l)<\cdots\}$, where $r_m(j,l)=r_m(j,l;\bar s_{j-1})$, 
$m\geq1$ \footnote{and may be finite, and even empty, for some $l\geq1$}, 
and let $l_1,m_1\ldots,l_k,m_k$ be such that $s_j=r_{m_j}(j,l_j)=:r_j=r_j(\bar s_j)$, $1\leq j\leq k$.

Now, given 
$(\bar t_{k}^{(n)})\in\vtpp$ associated to $\bar s_k$, we proceed with the coupling of the exponential random variables
from $K_j(\cdot|\bar s_{j-1})$ at the jump in $s_j$ and from $K^{(n)}_j(\cdot|\bar t^{(n)}_{j-1})$ at the jump in $t^{(n)}_{j}$, $1\leq j\leq k$, as follows.
For fixed $j=1,\ldots,k$ and $n\geq1$, if for $i=1,\ldots, j$
\begin{align}
	 &\Gamma_i^{(n)}(t_i^{(n)}|\bar t_{i-1}^{(n)})\in(a_{l_i},a_{l_i-1}), \,\nonumber\text{ and }\\
	&|t_i^{(n)}-s_i|<|t_i^{(n)}-r_{m}(i,l_i)|\, \text{ for all } \, m\geq 1,  m\neq m_i,\label{jumpcontitionk} 
\end{align}
then we assign $E_j(s_j|\bar{s}_{j-1})$ from $\fe_j$ as the standard exponential random variable in the jump of $K_j^{(n)}(\cdot|\bar{t}_{j-1}^{(n)})$ 
at $t^{(n)}_{j,i}(l)$;
otherwise, we use $E_j^{(n)}(t_j^{(n)}|\bar t_{j-1}^{(n)})$ from $\fen_j$, instead; assignments to $K_j^{(n)}(\cdot|\bar{s}_{j-1}^{(n)})$ with 
$\bar{s}_{j-1}^{(n)}\ne\bar{t}_{j-1}^{(n)}$ are also taken from $\fen_j$.

\begin{observacao}\label{coup1}\mbox{ }
	For each $j=1,\ldots,k$, $\bar s_j\in\ct_j$ and $n\geq1$,~\eqref{jumpcontitionk} holds for at most one choice of $\bar s^{(n)}_j\in\ct_j^{(n)}$, 
	namely, $\bar t_j^{(n)}$, the first $j$ coordinates of $\bar t_k^{(n)}=\bar t_k^{(n)}(\bar s_k)$, the $n$-th entry of $(\bar t_k^{(n)})\in\vtpp$ corresponding to $\bar s_k$, and there exists $n_0=n_0(\bar s_j)$ such that it does for that choice and  $n\geq n_0$.
\end{observacao}

We denote the family of standard exponential random variables assigned to $K^{(n)}_j(\cdot|\cdot)$, $1\leq j\leq k,\,n\geq1$, as above by
\begin{align}
	\fent_j=\{\hat E_j^{(n)}(s_j|\bar s_{j-1}),\,\bar s_j \in\ct^{(n)}_j\},\,1\leq j\leq k,\,n\geq1.
\end{align}
This will  be the standard exponential random variables in the construction of  the clock parts and clock processes in the definition of $\cz_k^{(n)}$, $n\geq1$.

\subsubsection{Auxiliary results}

Given $\ell\geq1$ and $\Lambda>0$, let 
\begin{equation}
	\cQ=\cQ_\ell=\{s\in\cR_1: \, 
	\gamma_1(s)>a_\ell\},
\end{equation}
where $(a_\ell)$ was given in the paragraph of~\eqref{a} above, and $\cR_1$ was defined in the alternative construction of $\cX_k$ in Subsubsection~\ref{alt} above.
Assuming $\cQ\ne\emptyset$, we may  almost surely write $\cQ=\{q_1<\ldots<q_{N}\}$ for some $1\leq N<\infty$, 
and we take $N=0$ to mean that $\cQ=\emptyset$. 

Assuming $N\geq1$, for $i=1,\ldots,N$, let $(q_i^{(n)})$ be the matching sequence to $q_i$ obtained from $\vtpp$ ---i.e., $q_i^{(n)}=t_1^{(n)}(q_i)$.  Let $\delta=\delta_\ell>0$ be such that $\gamma_1(s)<a_\ell$ for all  $s\in(R_1,R_1+\delta)$. Notice that $\delta$ is almost surely well defined, random, and $\delta_\ell\to0$ as $\ell\to\infty$ almost surely. Let us now write 
\begin{align}\label{cqn}
	\cQ^{(n)}&=\{s\in\T_1^{(n)}: s<R_1+\delta \,\text{ and }\, \gamma_1^{(n)}(s)>a_\ell\}\nonumber\\
    &=\{s_1^{(n)}<\ldots<s_{M_n}^{(n)}\}.
\end{align}

\begin{lema}\label{LemaPre1}
	Almost surely, for all $n$ large enough, we have that 
	$$M_n=N\,\text{ and  }\, \cQ^{(n)}=\{q_1^{(n)}<\ldots<q_N^{(n)}\}=:\tilde{\cQ}^{(n)}. $$
\end{lema}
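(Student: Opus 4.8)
\textbf{Proof plan for Lemma~\ref{LemaPre1}.}

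The plan is to analyze separately the two sets of jumps recorded in $\cQ^{(n)}$: those lying in $[0,R_1]$ (which should match the jumps of $\cQ$) and those lying in $(R_1,R_1+\delta)$ (which should be empty for large $n$). First I would recall that $\cQ=\{q_1<\cdots<q_N\}\subset\cR_1\subset\T_1\cap[0,R_1]$ is a \emph{finite} set of jumps of $S_1$, each with $\gamma_1(q_i)>a_\ell$, hence $S_1$ has a genuine jump of size $>a_\ell$ at each $q_i$; since by~\eqref{a} no $a_m$ equals any jump size in $\check\T_k$, the $\gamma_1(q_i)$ are bounded away from $a_\ell$, i.e. there is a (random) $\eta>0$ with $\gamma_1(q_i)>a_\ell+\eta$ for all $i\le N$. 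Using the characteristic property of $J_1$ convergence recalled in Remark~\ref{ObserSaltos}, the matching sequences $(q_i^{(n)})$ from $\vtpp$ satisfy $q_i^{(n)}\to q_i$ and $\gamma_1^{(n)}(q_i^{(n)})=S_1^{(n)}(q_i^{(n)})-S_1^{(n)}(q_i^{(n)}-)\to\gamma_1(q_i)$, so for $n$ large enough each $q_i^{(n)}$ is a jump of $S_1^{(n)}$ of size $>a_\ell$, and since $q_i<R_1<R_1+\delta$, eventually $q_i^{(n)}<R_1+\delta$ as well. Thus $\{q_1^{(n)},\ldots,q_N^{(n)}\}\subseteq\cQ^{(n)}$ for all large $n$, giving $M_n\ge N$.

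The reverse inclusion is the crux. I must show that, for large $n$, $S_1^{(n)}$ has no jump of size exceeding $a_\ell$ in $[0,R_1+\delta)$ other than the $q_i^{(n)}$'s. Suppose not: along a subsequence there are jumps $u_n\in\T_1^{(n)}$ with $\gamma_1^{(n)}(u_n)>a_\ell$ and $u_n\notin\{q_1^{(n)},\ldots,q_N^{(n)}\}$, all in the compact set $[0,R_1+\delta]$; pass to a further subsequence so that $u_n\to u\in[0,R_1+\delta]$. The standard lower-semicontinuity property of jump sizes under $J_1$ convergence (a consequence of the same property in Remark~\ref{ObserSaltos}, or of the fact that $S_1^{(n)}\to S_1$ uniformly on compacts at continuity points) forces $S_1$ to have a jump at $u$ with $\gamma_1(u)\ge a_\ell>a_\ell$... more carefully: $\gamma_1(u)\ge \limsup_n\gamma_1^{(n)}(u_n)$ fails in general, so instead I would argue that a mass of at least $a_\ell$ accumulates at $u$, whence $u\in\T_1$ and $\gamma_1(u)\ge a_\ell$; by~\eqref{a} in fact $\gamma_1(u)>a_\ell$. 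Now I split on where $u$ lies. If $u\in(R_1,R_1+\delta)$, this contradicts the definition of $\delta$, which was chosen precisely so that $\gamma_1(s)<a_\ell$ for all $s\in(R_1,R_1+\delta)$. If $u\in[0,R_1]$, then $u\in\cR_1$ and $\gamma_1(u)>a_\ell$, so $u\in\cQ$, i.e. $u=q_i$ for some $i$; but then $u_n$ and $q_i^{(n)}$ are two distinct jumps of $S_1^{(n)}$, both converging to $q_i$, each carrying mass $>a_\ell$ in the limit — this contradicts convergence of $S_1^{(n)}$ to $S_1$ near $q_i$, where $S_1$ places only a single jump of finite size $\gamma_1(q_i)$ (the total mass $S_1^{(n)}$ puts in a small neighborhood of $q_i$ would exceed $\gamma_1(q_i)+$const). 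The one remaining possibility $u=R_1$: either $R_1$ is handled by the $u\in[0,R_1]$ case (if $\gamma_1(R_1)>a_\ell$ then $R_1\in\cQ$, fine) or $\gamma_1(R_1)\le a_\ell$, and then the two-jumps-colliding argument applies again with $q_i^{(n)}$ replaced by a matching sequence to $R_1$, or one simply notes the accumulated mass at $R_1$ cannot exceed $\gamma_1(R_1)+o(1)<a_\ell+\eta$. In every case we reach a contradiction, so $M_n\le N$ eventually.

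Combining the two inclusions gives $M_n=N$ and $\cQ^{(n)}=\{q_1^{(n)}<\cdots<q_N^{(n)}\}$ for all $n$ large enough (the ordering being automatic once $q_i^{(n)}\to q_i$ with the $q_i$ strictly ordered), which is $\tilde\cQ^{(n)}$. Since all the convergences and the choice of $\delta$ hold on an event of full probability, the conclusion is almost sure. I expect the main obstacle to be the reverse inclusion $M_n\le N$: one must rule out \emph{spurious} large jumps of $S_1^{(n)}$ accumulating somewhere in $[0,R_1+\delta)$, and the cleanest way is the mass-accumulation/compactness argument above, taking care that $J_1$ convergence controls total mass on intervals with continuity-point endpoints (which exist densely, avoiding the at-most-countably-many jump points of $S_1$) rather than individual jump sizes directly; the role of~\eqref{a} is to keep jump sizes strictly away from the threshold $a_\ell$, and the role of $\delta$ is to quarantine the region just past $R_1$.
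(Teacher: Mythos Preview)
Your proposal is correct and follows essentially the same route as the paper's proof: the forward inclusion $\tilde\cQ^{(n)}\subset\cQ^{(n)}$ via matching sequences, then the reverse by contradiction, extracting a convergent subsequence of spurious large jumps whose limit must lie in $\cQ$ (using the choice of $\delta$ and~\eqref{a}), and concluding that two distinct large jumps of $S_1^{(n)}$ converging to the same $q_i$ violates $J_1$ convergence. The paper dispatches the last step in one line (``contradicts the convergence $S_1^{(n)}\to S_1$ in the $J_1$ metric''), whereas you spell out the mass-accumulation version of that contradiction and do a finer case split on the location of the limit point; these are cosmetic differences, not a different argument.
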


\begin{proof}
	Given $\ell$ we have that 
	\begin{enumerate}
		\item[(1)] For all $n$ large $\{q_1^{(n)}<\ldots<q_N^{(n)}\}\subset \cQ^{(n)}$. This follows immediately from the definition of $\cQ^{(n)}$ and the fact that $(q_i^{(n)})_n$ is a matching sequence to $q_i$, $i=1,\ldots,N$.
		\item[(2)] If $\{q_1^{(n)}<\ldots<q_N^{(n)}\}\ne \cQ^{(n)}$ infinitely often, then there must exist a subsequence $(s^{(n')}:=s^{(n')}_{i_{n'}})$, with $1\leq i_{n'}\leq M_{n'}$, such that $s^{(n')}\notin \tilde{\cQ}^{(n)}$, $s^{(n')}\to s$ as $n'\to\infty$, and $s\in[0,R_1+\delta)$, $\gamma_1^{(n')}(s^{(n')})\to\gamma_1(s)\geq a_\ell$ as $n'\to\infty$. Then $s\in\T_1$, $s\leq R_1$, $\gamma_1(s)>a_\ell$ \footnote{Equality here is not allowed by~\eqref{a}.}, and we must have that $s=q_i$ for some $i=1,\ldots, N$, which means that  $s^{(n')}$ is a matching sequence for $q_i$ and $s^{(n')}\neq q_i^{(n')}$ for every $n'$, which contradicts  the convergence $S_1^{(n)}\to S_1$ as $n\to\infty$ in the $J_1$ metric.
	\end{enumerate}
\end{proof}
In the notation of Lemma \ref{LemaPre1}, let $\cI:=\{I_1,\ldots,I_N\}$ denote the constancy intervals of ${Z}_1$ in $[0,L]$, associated to $\cQ=\{q_1,\ldots,q_N\}$, respectively, and write $I_m=[b_m,d_m)$, $1\leq m\leq N$.
Similarly, we denote by $\{I_1^{(n)},\ldots,I_{M_n}^{(n)}\}$  the set of constancy intervals of ${Z}_1^{(n)}$  in $[0,L_n]$ associated to $s\in\T_1^{(n)}$ such that $\gamma_1^{(n)}(s)>a_\ell$, where $L_n=\Lambda_n^{-1}T$, with $\Lambda_n\to\Lambda$ as $n\to\infty$, and write $I_m=[b^{(n)}_m,d^{(n)}_m)$, in increasing order in $m$, $1\leq m\leq M_n$.
\begin{lema}\label{LemaPre2}
	Given $\varepsilon,\varepsilon'>0$, there exists $\ell=\ell_\ve$ and $\ve'=\ve'_\ve$, with  $\ve'\to0$ as $\ve\to0$ such that, with probability bounded below by $ 1-c_\varepsilon$ for all $n$ large,  with $\ce\to0$ as $\ve\to0$, we have that:
	\begin{itemize}
		\item[(i)] $N\geq1$ and $q_N=R_1$;
		\item[(ii)] $M_n=N$,  $\{I_1^{(n)},\ldots,I_N^{(n)}\}$ correspond to constancy intervals of $Z_1^{(n)}$ associated to $\{q_1^{(n)},\ldots,q_N^{(n)}\}$,  respectively, and 
		\item[(iii)] $\max_{1\leq m\leq N}|b^{(n)}_m-b_m|\leq\ve'$; \,$\max_{1\leq m\leq N}|d^{(n)}_m-d_m|\leq\varepsilon'$. 
	\end{itemize}
\end{lema}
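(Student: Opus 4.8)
\textbf{Proof plan for Lemma~\ref{LemaPre2}.}

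The plan is to convert the deterministic, $J_1$-convergence input of Lemma~\ref{LemaPre1} into the quantitative, high-probability statement (i)--(iii) by choosing the threshold level $\ell$ (equivalently the cutoff $a_\ell$) small enough that the ``large'' jumps $\cQ_\ell$ already capture all the relevant structure of $Z_1$ up to the exponential horizon $L$. First I would treat part (i). Recall from the alternative construction in Subsection~\ref{alt} that $R_1\in\T_1$ a.s.\ (Lemma~\ref{LemmaExercicio}(a)), and that $\gamma_1(R_1)=\gamma_1(R_1)>0$; hence for every $\ve>0$ there is a (random) level $\ell_0$ with $\pr(\gamma_1(R_1)>a_{\ell_0})\geq 1-\ve$, and on that event $R_1\in\cQ_{\ell_0}$, so $N\geq1$; moreover, since $R_1=\inf\{r:K_1(r)\geq L\}$ is attained at the last jump of $S_1$ before the clock $C_1=K_1$ exceeds $L$, the largest element $q_N$ of $\cQ_\ell$ that is $\leq R_1$ is exactly $R_1$ once $a_\ell<\gamma_1(R_1)$ — but to get $q_N=R_1$ (rather than merely $q_N\le R_1$) one uses that $\cQ_\ell\subset\cR_1=\{s\in\T_1:s\le R_1\}$ by definition, so $q_N\le R_1$, combined with $R_1\in\cQ_\ell$. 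Choosing $\ell$ possibly larger to also control the finitely many exceptional events below gives (i).

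Next, part (ii) is essentially Lemma~\ref{LemaPre1} together with a matching of constancy intervals: once $M_n=N$ and $\cQ^{(n)}=\tilde\cQ^{(n)}=\{q_1^{(n)}<\dots<q_N^{(n)}\}$ (which holds a.s.\ for all $n$ large, for each fixed $\ell$), the constancy interval $I_m^{(n)}$ of $Z_1^{(n)}$ with $\gamma_1^{(n)}$-value exceeding $a_\ell$, listed in increasing order, is precisely the one associated to $q_m^{(n)}$, because the endpoints $b_m^{(n)}<d_m^{(n)}$ are increasing in $m$ and the jump locations $q_m^{(n)}$ are increasing in $m$; the only subtlety is to discard the (probability $\to 0$ as $\ve\to 0$) event that the intersection in~\eqref{constancyintervalj} with $[0,L_n]$ is empty for some $m$, i.e.\ that some $I_m^{(n)}$ is truncated away by the horizon — this is handled by taking $\ell$ large enough that the last large jump $q_N$ strictly precedes $R_1$ with room to spare, using $\delta=\delta_\ell>0$ from~\eqref{cqn} and $\delta_\ell\to0$. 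Since all the events involved are a finite (bounded by $N$, which is a.s.\ finite for fixed $\ell$) collection, ``for all $n$ large'' is legitimate after fixing $\ell$.

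For part (iii), the endpoints are $b_m=C_1(q_m-)$, $d_m=C_1(q_m)$ and $b_m^{(n)}=C_1^{(n)}(q_m^{(n)}-)$, $d_m^{(n)}=C_1^{(n)}(q_m^{(n)})$, where $C_1^{(n)}=K_1^{(n)}$ is the clock part built, under the coupling of Subsubsection~\ref{SecCoupling}, from the \emph{same} standard exponentials $E_1(q_m)$ at the matched jumps $q_m^{(n)}$ (this is exactly where Remark~\ref{coup1} and condition~\eqref{jumpcontitionk} are used: for $n$ large each $q_m^{(n)}$ inherits $E_1(q_m|\emptyset)$). Then $d_m^{(n)}=\sum_{s\in\T_1^{(n)},\,s\le q_m^{(n)}}\gamma_1^{(n)}(s)E_1^{\text{(coupled)}}(s)$, and one splits this sum into the contribution of the $N$ large jumps $q_1^{(n)},\dots$ — which converges termwise to $\sum_{i\le m}\gamma_1(q_i)E_1(q_i)$ since $\gamma_1^{(n)}(q_i^{(n)})\to\gamma_1(q_i)$ by the matching property — plus the contribution of all jumps of $S_1^{(n)}$ of size $\le a_\ell$, which is bounded in $L^1$ (hence, uniformly in $n$, small in probability) by a quantity that $\to0$ as $\ell\to\infty$, using that $\Er[\text{small-jump part}]$ is controlled by $S_1^{(n)}(R_1+\delta)$ restricted to jumps below $a_\ell$, and $S_1^{(n)}\to S_1$ in $D(\R_+)$ forces tightness of these. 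Matching the small-jump bound to $\ve'$ and the horizon $L_n\to L$ (via $\Lambda_n\to\Lambda$) gives $\max_m|d_m^{(n)}-d_m|\le\ve'$ on an event of probability $\ge1-\ce$; the bound for $b_m^{(n)}=d_{m-1}^{(n)}+(\text{intervening small jumps})$ follows identically.

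The main obstacle I anticipate is part (iii): the $J_1$ convergence $S_1^{(n)}\to S_1$ controls the \emph{large} jumps of $S_1^{(n)}$ well but says nothing a priori about the accumulated mass of the \emph{small} jumps, which enters both clock endpoints $b_m^{(n)},d_m^{(n)}$ through the randomized sum~\eqref{K}. Making the ``small-jump remainder is uniformly (in $n$) negligible as $\ell\to\infty$'' step rigorous requires extracting, from $J_1$-convergence on a bounded interval, a uniform integrability / tightness bound on $\sum_{s\le R_1+\delta,\,\gamma_1^{(n)}(s)\le a_\ell}\gamma_1^{(n)}(s)$; this is true because $S_1^{(n)}$ restricted to that interval converges in $D$ and hence is tight, but it is the one place where more than a purely combinatorial argument is needed, and it is what forces the two-parameter choice $(\ell_\ve,\ve'_\ve)$ with $\ce,\ve'_\ve\to0$ in the statement.
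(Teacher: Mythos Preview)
Your plan is correct and matches the paper's proof closely: (i) via Lemma~\ref{LemmaExercicio} and choosing $\ell$ so that $a_\ell<\gamma_1(R_1)$; (ii) via Lemma~\ref{LemaPre1}; and (iii) by splitting the clock into the coupled large-jump part (which converges termwise) and the small-jump remainder, the latter controlled --- exactly as you anticipate in your final paragraph --- by Markov's inequality together with the fact that $J_1$-convergence of $S_1^{(n)}\to S_1$ forces $\sum_{s\le A,\,\gamma_1^{(n)}(s)<a_\ell}\gamma_1^{(n)}(s)$ to be small for large $n$ once $\ell$ is large. One slip to fix: in your discussion of (ii) you write ``the last large jump $q_N$ strictly precedes $R_1$ with room to spare'', which contradicts the conclusion $q_N=R_1$ of (i); what is actually needed (and what the paper argues) is that the \emph{left endpoint} $b_N^{(n)}\le L_n$, which follows from the small-jump bound, and the $m=N$ case of (iii) then requires separately comparing $d_N^{(n)}=L_n$ with $d_N=L$ via $|K_1^{(n)}(q_N^{(n)}-)-K_1(R_1-)|$.
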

\begin{proof}
	Item (i) does not depend on $n$; the first assertion is quite clear, and the second follows from Lemma~\ref{LemmaExercicio}. For the remaining items, we argue as follows.
	\begin{enumerate}
		\item[(1)] By Markov's inequality,  for all $A\geq 0$ we have that 
        \begin{widetext}
		\begin{align} 
			\pr\left(\sum_{s\in\T_1; s\leq R_1\atop \gamma_1(s)<a_\ell} \gamma_1(s)E_1(s)>\varepsilon\right)
			&=\pr\left(\sum_{s\in\T_1; s\leq R_1\atop \gamma_1(s)<a_\ell} \gamma_1(s)E_1(s)>\varepsilon;\,R_1\leq A\right)  \nonumber
			+\pr\left(\sum_{s\in\T_1; s\leq R_1\atop \gamma_1(s)<a_\ell} \gamma_1(s)E_1(s)>\varepsilon;\,R_1> A\right) \nonumber \\
			&\leq\frac{1}{\varepsilon}      \underbrace{\displaystyle\sum_{s\in\T_1; s\leq A\atop \gamma_1(s)<a_\ell}
				\gamma_1(s)}_{\kappa}+\pr(R_1>A) \label{MarkIneq1}
		\end{align}
        \end{widetext}
		Given $\varepsilon>0$, we may choose $A$ and $\ell$ such that $\pr(R_1>A)<\ve$ 
		and $\kappa\leq \ve^2$; 
		it follows from this choice that  the right hand side of (\ref{MarkIneq1}) is bounded above by $2\ve$. 
		\item[(2)] Similarly, 
		\begin{align} 
		&\pr\left(\displaystyle\sum_{s\in\T_1^{(n)};\,
				s< R_1+\delta\atop\gamma_1^{(n)}(s)<a_\ell}\gamma_1^{(n)}(s)\hat E^{(n)}_1(s)>\varepsilon\right) \nonumber \\
			&\leq\frac{1}{\varepsilon}
			\underbrace{\displaystyle\sum_{s\in\T_1^{(n)};\,
					s\leq A\atop\gamma_1^{(n)}(s)<a_\ell}\gamma_1^{(n)}(s)}_{\kappa'}+\pr(R_1+\delta>A)\label{MarkIneq2}
		\end{align}
		Since $(\delta_\ell)$ is tight, we may choose $A$ such that $\pr(R_1+\delta>A)<\ve$ 
		for all $\ell\geq1$. 
		Taking $\ell$ as at the end of (1), by the convergence $S_1^{(n)}\to S_1$, we have that, for all large $n$, $\kappa'\leq\ve^2$, 
		and thus the right hand side of (\ref{MarkIneq2}) is bounded above by $2\ve$. 
		
		\item[(3)] Let us choose $\ell$ so large that $q_{N}=R_1$ with probability larger than $1-\ve$.
		From (1), enlarging $\ell$ if necessary, 
		we have that for $1\leq m\leq N$, with probability larger than $1-3\ve$
		\begin{align}
			&\sum_{i=1}^{m-1}\gamma_1(q_i)E_1(q_i)\nonumber\\
			&\geq K_1(q_m-)-\sum_{\substack{s\in\T_1:\,s< R_1;\\\gamma_1(s)<a_\ell}}\gamma_1(s)E_1(s) \nonumber\\
			&\geq K_1(q_m-)-\varepsilon,
		\end{align}
        where $\sum_{i=1}^0 \cdots = 0$.
		%
		 
		Thus, using the coupling, for $1\leq m\leq N$ and 
		all large $n$ 
		\begin{align}
			&\sum_{i=1}^{m-1}\gamma_1^{(n)}(q_i^{(n)})\hat E^{(n)}_1(q_i)\nonumber\\
            &=\sum_{i=1}^{m-1}\gamma_1^{(n)}(q_i^{(n)})E_1(q_i)\nonumber \\
            &\geq K_1(q_m-)-2\varepsilon=b_m-2\ve \label{lc1}
		\end{align}
		with probability larger than $1-3\ve$. 
		
		Now, outside an event of probability smaller than $\ve$, 
		we may assume that the conclusion of  Lemma \ref{LemaPre1} holds. Using this and the fact that
		$\gamma_1^{(n)}(q_m^{(n)})E_1(q_m)=K_1^{(n)}(q_m^{(n)})-K_1^{(n)}(q_m^{(n)}-)$ for $1\leq m\leq N$ and
		all large $n$ (by the coupling of Subsubsection~\ref{coup1}),  
		we have that 
		\begin{align}
			&\sum_{\substack{s\in\T_1^{(n)}:\, s\leq q^{(n)}_m\wedge(R_1+\delta);\\\gamma_1^{(n)}(s)>a_\ell}}\!\!\!\!\!\!\!\!\!
			\gamma_1^{(n)}(s)\hat E_1^{(n)}(s) \nonumber\\
			&=\sum_{i=1}^{m-1}\gamma_1^{(n)}(q_i^{(n)})E_1(q_i) +\gamma_1^{(n)}(q_m^{(n)})E_1(q_m)  \nonumber \\
			&\geq K_1(q_m)-3\varepsilon. 
		\end{align}
		with probability larger than $1-4\ve$. 
		At this point, let us remark that by Lemma \ref{LemmaExercicio}, we have that 
		with probability larger than $1-\ce'$, with $\ce'\to0$ as $\ve\to0$, $K_1(R_1)-L>3\ve$.
		Thus, enlarging $n$ if necessary, we have that
		\begin{align}\label{lln}
			&\sum_{\substack{s\in\T_1^{(n)}:\,
					s\leq R_1+\delta}}\gamma_1^{(n)}(s)\hat E_1^{(n)}(s)\\
                    &\geq	\sum_{\substack{s\in\T_1^{(n)}:\,s\leq R_1+\delta;\\\gamma_1^{(n)}(s)>a_\ell}}
			\gamma_1^{(n)}(s)\hat E_1^{(n)}(s)\nonumber\\
            &\geq L_n-3\ve.
		\end{align}
		with probability larger than $1-4\ve-\ce'$. 
		
		Now,
		from the coupling,  it follows that for $1\leq m\leq N$ and all large $n$
		\begin{align}\label{sc1}
			&\sum_{i=1}^{m-1}\gamma_1^{(n)}(q_i^{(n)})\hat E_1^{(n)}(q_i^{(n)})\nonumber\\
            &\leq\sum_{i=1}^{m-1}\gamma_1(q_i)E_1(q_i)+\ve\nonumber\\
			&\leq b_m+\ve 
		\end{align}
		with probability bounded below by $1-\ve$;  
		it follows from (2) that 
		\begin{align}
			\tilde b^{(n)}_m\,\,:&=\sum_{\substack{s\in\T_1^{(n)}:\,s< q_m^{(n)} \wedge(R_1+\delta)}}\gamma_1^{(n)}(s)\hat E_1^{(n)}(s)\nonumber \\
			&\leq\sum_{\substack{s\in\T_1^{(n)}:\,s\leq R_1+\delta\\\gamma_1^{(n)}(s)<a_\ell}} 
			\gamma_1^{(n)}(s)\hat E_1^{(n)}(s)\nonumber\\
            & \hspace{1cm}+\sum_{i=1}^{m-1}\gamma_1^{(n)}(q_i^{(n)})\hat E_1^{(n)}(q_i^{(n)})\nonumber\\
			&\leq b_m+2\ve, \label{ineqsemtN}
		\end{align}
		with probability bounded below by $1-4\ve$ for $1\leq m\leq N$ and all large $n$. 
		It follows from~\eqref{ineqsemtN} that
		\begin{equation}\label{sln}
			\tilde b^{(n)}_N\leq L_n,
		\end{equation}
		as soon as $n$ is large enough and $L-K_1(R_1-)\geq 3\ve$, an event whose  probability we write as $1-c''_\ve$; by Lemma~\ref{LemmaExercicio},  $c''_\ve$ as $\ve\to0$.
	\end{enumerate}
	
	
	In the intersection of the events where inequalities in~\eqref{lc1},~\eqref{lln},~\eqref{ineqsemtN} and~\eqref{sln}  hold,  and where  the conclusion of  Lemma \ref{LemaPre1} holds, we have that $\tilde b^{(n)}_m= b^{(n)}_m\geq b_m-2\ve$ for $1\leq m\leq N$, and Item (ii) and the first claim of Item (iii) of the lemma then also hold, and that happens with probability larger than $1-11\ve-\ce'-\ce''$ for all large $n$. 
	
	As for the second claim of  Item (iii), it follows readily from the first claim once we disregard $m=N$ in the max sign, upon noticing
	that $d^{(n)}_m-b^{(n)}_m=\ga_1^{(n)}(q^{(n)}_m)E_1(q_m)$ and $d_m-b_m=\ga_1(q_m)E_1(q_m)$ for all large $n$
	from the coupling, and $\ga_1^{(n)}(q^{(n)}_m)\to \ga_1(q_m)$ as $n\to\infty$ 
	(once we are in the event where the conclusion of  Lemma \ref{LemaPre1} holds). This is perhaps less clear for the $m=N$ case, 
	which may be justified as follows. In the event where~\eqref{lln} and~\eqref{sln} take place, we have that $d^{(n)}_N-b^{(n)}_N=L_n-K_1^{(n)}(q_N^{(n)}-)$, as soon as $n$ is sufficiently large, so it is enough to argue the claim that
	\begin{equation}\label{epsl}
		|(L_n-K_1^{(n)}(q_N^{(n)}-))-(L-K_1(R_1-))|\leq\varepsilon'
	\end{equation}
	with probability larger than $1-\ce'''$, with $\ce'''\to0$ as $\ve\to0$, for all large $n$. 
	In the event where~\eqref{lc1}, \eqref{ineqsemtN}   and  the conclusion of  Lemma \ref{LemaPre1} hold, we have that
	\begin{align*}
	K_1(R_1-)-2\ve &\leq \sum_{i=1}^{N-1}\gamma_1^{(n)}(q_i^{(n)})\hat E_1^{(n)}(q_i^{(n)})\\
    &\leq K_1^{(n)}(q_N^{(n)}-)\leq K_1(R_1-)+2\ve, 
	\end{align*}
	so that the left hand side of~\eqref{epsl} is bounded above by $3\ve$ as soon as $n$ is large enough,
	and~\eqref{epsl} holds with probability larger than $1-\ce''':=1-11\ve-\ce'$ 
	for all large $n$, and the claim follows once $\ve'\geq 3\ve$.
\end{proof}

\subsubsection{Preliminaries to the proof of Theorem \ref{PrinciTeor}}

Let us develop some notation which  will enter  the proof of Theorem \ref{PrinciTeor}.
As in the setting of the previous subsubsection, we fix $\Lambda>0$ and a standard exponential random variable $T$, and make $L=\Lambda^{-1}T$. 
We fix $\ell_1,\ldots,\ell_k\geq1$ arbitrarily.
\begin{enumerate}
	\item[1)] 
	Write $\ell_1$, $\cQ_1$  and $N_1$ for  $\ell$, $\cQ$ and $N$ in the preliminaries to Lemma~\ref{LemaPre1}, respectively. Set $\mt_1=\cQ_1$, and let
	$\cI_1$ be as $\cI$ in Lemma~\ref{LemaPre2}.
	
	\item[2)]Proceeding inductively on the level, given $1<j\leq k$, and a constancy interval of $\bar{Z}_{j-1}:=(Z_1,\ldots,Z_{j-1})$  in $[0,L]$ associated to $\bar{s}_{j-1}\in\mt_{j-1}$ 
	---we denote it by $I_{j-1}(\bar{s}_{j-1})$---, 
	let 
	$$
	\cQ_{j}(\bar s_{j-1})=\{s\in\cR_j(\bar s_{j-1}):\,  
	\gamma_j(\bar s_{j-1})>a_{\ell_j}\},
	$$ 
	and set 
	$N_{j}(\bar s_{j-1})=|\cQ_{j}(\bar s_{j-1})|$, the cardinality of $\cQ_{j}(\bar s_{j-1})$. 
	Also, set $$\mt_{j}=\bigcup_{\bar r_{j-1}\in\mt_{j-1}}\{\bar r_{j-1}\}\times\cQ_{j}(\bar r_{j-1}).$$
	Here and below, $\{\cdot\}\times\emptyset=\emptyset$.
	Let now $$\cI_{j}(\bar s_{j-1})=\Big\{I_j(\bar s_j)=\big[b_{j}(\bar s_{j-1}),d_{j}(\bar s_{j-1})\big),\,s_j\in\cQ_{j}(\bar s_{j-1})\Big\}$$
	be  the collection of constancy intervals of $\bar{Z}_{j}$ in $I_j(\bar{s}_{j-1})$ associated to $\bar s_{j}\in\mt_j$. 
\end{enumerate}

Similarly,  we introduce the respective notation for the processes $\cz_k^{(n)}$, $n\geq1$, up to time $L$, obtaining, for $j=1,\ldots,k$, 
$$
\cQ^{(n)}_{j}(\bar s^{(n)}_{j-1})=\big\{s\in\cR^{(n)}_j(\bar s^{(n)}_{j-1}):\, \gamma^{(n)}_j(\bar s^{(n)}_{j-1})>a_{\ell_j}\big\},
$$
$N^{(n)}_{j}(\bar s^{(n)}_{j-1})=|\cQ^{(n)}_{j}(\bar s^{(n)}_{j-1})|$, 
$\mt^{(n)}_1=\cQ^{(n)}_1$ and
$\mt^{(n)}_{j}=\bigcup_{\bar r^{(n)}_{j-1}\in\mt^{(n)}_{j-1}}\{\bar r^{(n)}_{j-1}\}\times\cQ^{(n)}_{j}(\bar r^{(n)}_{j-1}),\,1<j\leq k$,
and
the collection 
\begin{widetext}
\begin{equation*}
\cI^{(n)}_{j}(\bar s_{j-1}^{(n)})=\Big\{I^{(n)}_j(\bar{s}^{(n)}_j)
=\big[b^{(n)}_j(\bar{s}^{(n)}_j), d^{(n)}_j(\bar{s}^{(n)}_j)\big),\,s^{(n)}_j\in\cQ^{(n)}_{j}(\bar s^{(n)}_{j-1})\Big\}
\end{equation*}
\end{widetext}
of constancy intervals of $\bar Z_j^{(n)}$ in $[0,L]$ associated to $\bar s_j^{(n)}\in\mt_j^{(n)}$. 

Let us now pick for each $\bar s_k\in\mt_k$ the corresponding compounded matching sequence $(\bar t_k^{(n)})=(\bar t_k^{(n)}(\bar s_k)$ from $\vtpp$ ---recall the definition in~\eqref{vtpp}---, and for each $n\geq1$, set $\tilde\mt^{(n)}_k=\bigcup_{\bar s_k\in\mt_k}\{\bar t_k^{(n)}(\bar s_k)\}$. Now for each $0\leq j<k$ and $n\geq1$, let $\tcq^{(n)}_{j+1}(\bar t^{(n)}_j)=\{ r_{j+1}:\,\bar r_k\in\tilde\mt^{(n)}_k\text{ and }\bar r_j=\bar t^{(n)}_j\}$; the first bullet item in the paragraph of~\eqref{3bul} ensures that this is uniquely defined.

Let the elements of $\mt^{(n)}_k$ and $\mt_k$ be put in lexicographic order.

\begin{coro}[Corollary to Lemma \ref{LemaPre2}]\label{Corotolema2}
	Given $\varepsilon,\varepsilon''>0$, $\ell\geq 0$ and 
	$\Lambda>0$, there exist (deterministic) $\hat N_1,\ldots,\hat N_k$ and $\ell_1\geq \ell, \ell_2\geq \ell,\ldots, \ell_k\geq l$ such that the event where
	\begin{align}\label{eqcortolema}
		\cQ^{(n)}_{j}(\bar s^{(n)}_{j-1})=
		\tcq^{(n)}_{j}(\bar t^{(n)}_{j-1}), &\quad
		1\leq N^{(n)}_{j}(\bar s^{(n)}_{j-1})=N_{j}(\bar s_{j-1}) \leq \hat N_j;\\\label{eqcortolema2}
		\Big|b^{(n)}_j(\bar{s}^{(n)}_j)-b_j(\bar{s}_j)\Big|\leq\varepsilon'', & \quad
		\Big|d^{(n)}_j(\bar{s}^{(n)}_j)-d_j(\bar{s}_j)\Big|\leq\varepsilon'', & \quad
	\end{align}
	$1\leq j\leq k,\,\bar s^{(n)}_k\in\mt^{(n)}_k,\,\bar s_k\in\mt_k$,
	has probability larger than $1-\d$ for all $n$ large enough, where $\d=\d_\ve$ is such that $\d\to0$ as $\ve\to0$. 
\end{coro}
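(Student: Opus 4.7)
The plan is to prove the corollary by induction on $j$, with the base case furnished directly by Lemma~\ref{LemaPre2} and the inductive step a localized reapplication of the same lemma within each constancy interval produced at the previous level. For the base case $j=1$, since $N_1$ is almost surely finite, pick a deterministic $\hat N_1$ with $\pr(N_1>\hat N_1)<\ve$. Lemma~\ref{LemaPre2} applied with $\ell_1\geq\ell$ taken large enough (in terms of $\ve$ and $\ve''$) then gives $\cQ_1^{(n)}=\tcq_1^{(n)}=\{q_1^{(n)},\ldots,q_{N_1}^{(n)}\}$ and the interval estimates at level~$1$, on an event of probability at least $1-c_\ve-\ve$ for all $n$ large enough.

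For the inductive step, suppose~\eqref{eqcortolema}--\eqref{eqcortolema2} hold at levels $1,\ldots,j-1$ with deterministic bounds $\hat N_1,\ldots,\hat N_{j-1}$ on an event $\A_{j-1}$ of probability at least $1-\d_{j-1}$, with $\d_{j-1}\to0$ as $\ve\to0$. On $\A_{j-1}$, for each $\bar s_{j-1}\in\mt_{j-1}$ the matched pair $I_{j-1}(\bar s_{j-1})$, $I_{j-1}^{(n)}(\bar t_{j-1}^{(n)})$ have endpoints within $\ve''$ of each other. By items (1)--(2) of Remark~\ref{X-process} and the alternative construction in Subsubsection~\ref{alt}, the restriction of $Z_j$ to $I_{j-1}(\bar s_{j-1})$ is an independent level-1 type CJE driven by $S_j(\cdot|\bar s_{j-1})$ on an exponential time horizon equal to the length of $I_{j-1}(\bar s_{j-1})$, which, by Lemma~\ref{LemmaExercicio}(d) applied at the previous level, has the required exponential distribution independently of the inner data; the analogous statement holds for $Z_j^{(n)}$ on $I_{j-1}^{(n)}(\bar t_{j-1}^{(n)})$. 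The convergence $\ss_k^{(n)}\to\ss_k$ supplies $S_j^{(n)}(\cdot|\bar t_{j-1}^{(n)})\to S_j(\cdot|\bar s_{j-1})$ in $J_1$ (with the gap condition in the discrete case), and the coupling of Subsubsection~\ref{SecCoupling} assigns the same standard exponentials to the matched big jumps for all $n$ large. Hence Lemma~\ref{LemaPre2} applies inside each such pair of intervals and yields, for $\ell_j\geq\ell$ taken large enough, the identities $\cQ_j^{(n)}(\bar t_{j-1}^{(n)})=\tcq_j^{(n)}(\bar t_{j-1}^{(n)})$ and $N_j^{(n)}(\bar t_{j-1}^{(n)})=N_j(\bar s_{j-1})$, together with the interval estimates at level $j$, with conditional error probability at most $c_\ve$.

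Uniformity in $\bar s_{j-1}$ is recovered via a union bound: on $\A_{j-1}$ the index set $\mt_{j-1}$ has cardinality at most $\hat N_1\cdots\hat N_{j-1}$, which is deterministic, so taking $\ell_j$ as the maximum of the thresholds arising from the single-interval applications and choosing $\hat N_j$ to dominate $\max_{\bar s_{j-1}}N_j(\bar s_{j-1})$ with leftover error $\ve$ yields the level-$j$ conclusion with total error probability $\d_j=\d_{j-1}+(\hat N_1\cdots\hat N_{j-1})\,c_\ve+\ve$, which still tends to $0$ as $\ve\to0$. Iterating up to $j=k$ produces the claimed $\d=\d_\ve$.

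The main obstacle will be justifying the simultaneous invocation of Lemma~\ref{LemaPre2} inside every interval $I_{j-1}(\bar s_{j-1})$ with a common threshold $a_{\ell_j}$ and a common cardinality bound $\hat N_j$. What makes this feasible is the deterministic boundedness of $|\mt_{j-1}|$ by $\hat N_1\cdots\hat N_{j-1}$ (from the previous inductive step), the independence-between-branches structure from items (1)--(2) of Remark~\ref{X-process}, and the distributional description in Lemma~\ref{LemmaExercicio}(d) which identifies the inner time horizon as an exponential of the correct rate independent of the inner subordinator and inner standard exponentials, so that the level-1 lemma genuinely applies inside each branch with only finitely many error terms to be summed.
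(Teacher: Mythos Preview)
Your approach mirrors the paper's: induction on the level, with Lemma~\ref{LemaPre2} re-applied inside each matched constancy interval, exploiting the exponential-length description from Lemma~\ref{LemmaExercicio} and a union bound over the at most $\hat N_1\cdots\hat N_{j-1}$ branches. Two bookkeeping points, however, need repair.

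First, the error bound $\d_j=\d_{j-1}+(\hat N_1\cdots\hat N_{j-1})\,c_\ve+\ve$ need not vanish as $\ve\to0$: the ceilings $\hat N_i=\hat N_i(\ve)$ are chosen so that $\pr(N_i>\hat N_i)<\ve$ and therefore diverge as $\ve\to0$ (note also that $N_i$ itself depends on $\ell_i$, which grows with $1/\ve$), while $c_\ve$ comes from applying Lemma~\ref{LemaPre2} with the \emph{same} parameter $\ve$ at every level. The product is thus of the form $\infty\cdot0$ and your claim that it tends to $0$ is unjustified. The paper sidesteps this by, at level $j$, invoking Lemma~\ref{LemaPre2} with a \emph{smaller} error target $\d\,2^{-j-1}/(\hat N_1\cdots\hat N_{j-1})$ (equivalently, taking $\ell_j$ large enough for that target); this is legitimate because $\hat N_1,\ldots,\hat N_{j-1}$ are already fixed when level $j$ is treated, and the per-level contributions then sum geometrically to at most $\d$.

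Second, the endpoint estimate \eqref{eqcortolema2} does not follow directly from one inner application of Lemma~\ref{LemaPre2}. That lemma, applied inside $I_{j-1}(\bar s_{j-1})$, controls only the \emph{relative} endpoints $b_j-b_{j-1}$ (clock-part increments within the parent interval) to precision $\ve'$; the absolute discrepancies $|b_j^{(n)}-b_j|$ telescope over the levels $1,\ldots,j$ and accumulate. The paper handles this explicitly by fixing $\ve'=\ve''/(2k)$ at the outset, so that the telescoped sum over $k$ levels stays below $\ve''$. Your sentence ``together with the interval estimates at level $j$'' elides this accumulation.
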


\begin{observacao}\label{obsersubsequence}
	Notice that when~\eqref{eqcortolema} holds, we may and will make  $\bar{t}^{(n)}_k=\bar{s}^{(n)}_k$ for all $\bar{s}^{(n)}_k\in\mt^{(n)}_k$,
	thus establishing a 1 to 1 correspondence between $\bar{s}^{(n)}_k\in\mt^{(n)}_k$ and $\bar{s}_k\in\mt_k$.
\end{observacao}

\begin{proof}
	We use Lemma~\ref{LemaPre2} repeatedly with $\ve'=\frac{\ve''}{2k}$.
	Lemma~\ref{LemaPre2} and the fact that $N_1$ is a.s.~finite yield~\eqref{eqcortolema} for $j=1$  with probability larger than $1-
	\frac12\d$, with $\d:=2c_{1,\ve}:=2(c_\ve+\ve)$, by choosing $\ell_1=\ell$ in that lemma. We proceed inductively to levels $1<j\leq k$. Assuming it holds for $1\leq i<j$, $\bar s^{(n)}_{j-1}\in\mt^{(n)}_{j-1},\,\bar s_{j-1}\in\mt_{j-1}$, for suitable choices of $\ell_1,\ldots,\ell_{j-1}\geq\ell$ and
	$\hat N_1\,\ldots, \hat N_{j-1}$, with probability larger than $1-(1-2^{-j+1})\d$, we first notice that for all $\bar s_{j-1}\in\mt_{j-1}$, the constancy interval $I_{j-1}(\bar s_{j-1})$ and $I^{(n)}_j(\bar{s}^{(n)}_{j-1})=I^{(n)}_j(\bar{t}^{(n)}_{j-1})$, are, by Lemma~\ref{LemmaExercicio}, exponentially distributed, with respective rates $\La+\La_{j-1}(s_{j-1}|\bar s_{j-2})$ and $\La+\La^{(n)}_{j-1}(t^{(n)}_{j-1}|\bar t^{(n)}_{j-2})$, which converge one to the other as $n\to\infty$, so the condition of Lemma~\ref{LemaPre2} is satisfied. Notice that the number of such intervals is bounded by 
	$\check N_{j-1}:=\prod_{i=1}^{j-1}\hat N_i$. 
	
	Notice also, at this point, that as far as the lower ends of the constancy intervals 
	$\cI_{j}(\bar s_{j-1})$ and $\cI^{(n)}_{j}(\bar s_{j-1}^{(n)})$, namely  $b_j(\bar s_j)$, $\bar s_{j}\in\cQ_j(\bar s_{j-1})$,
	and $b^{(n)}_j(\bar s^{(n)}_j)$,  $\bar s^{(n)}_{j}\in\cQ^{(n)}_j(\bar s^{(n)}_{j-1})$, $1\leq j\leq k$, $\bar s_k\in\mt_k$, 
	$\bar s^{(n)}_k\in\mt^{(n)}_k$, are concerned, we have that Lemma~\ref{LemaPre2} applies to 
    \begin{align}\label{bdif}
        \tilde b_j(\bar s_j)&:=b_j(\bar s_j)-b_{j-1}(\bar s_{j-1})\,
		\text{ and }\nonumber\\
		\tilde b^{(n)}_j(\bar s^{(n)}_j)&:=b^{(n)}_j(\bar s^{(n)}_j)-b^{(n)}_{j-1}(\bar s^{(n)}_{j-1})
    \end{align}
    and
    \begin{align}\label{ddif}
        \tilde d_j(\bar s_j)&:=d_j(\bar s_j)-d_{j-1}(\bar s_{j-1})\,
		\text{ and }\nonumber\\
		\tilde d^{(n)}_j(\bar s^{(n)}_j)&:=d^{(n)}_j(\bar s^{(n)}_j)-d^{(n)}_{j-1}(\bar s^{(n)}_{j-1}),
    \end{align}
	with $b_0(\emptyset)\equiv b^{(n)}_0(\emptyset)\equiv0$,
	and~\eqref{eqcortolema2}
	follows from an application of the lemma, bearing in mind the assumption made on $\ve'$ at the beginning of this proof.

	We may then apply Lemma~\ref{LemaPre2} to $\cI_{j}(\bar s_{j-1})$ and $\cI^{(n)}_{j}(\bar s_{j-1}^{(n)})$,
	to find a common $\ell_j\geq\ell$ for 
	which the first two items of~\eqref{eqcortolema}, but for the last inequality, as well as~\eqref{eqcortolema2} 
	hold with probability larger than $1-\d2^{-j-1}/\check N_{j-1}$.
	
	Since again 
	$N_j(\bar s_{j-1})$ is finite for each corresponding $\bar s_{j-1}$, we may find a common $\hat N_j$ for all intervals such that 
	$N_j(\bar s_{j-1})\leq \hat N_j$ with probability larger than $1-\d2^{-j-1}/\check N_{j-1}$. It follows that~\eqref{eqcortolema} holds
	up to level $j$ with $1-(1-2^{-j})\d$, and the induction step is complete. Going to level $k$ establishes the result.
\end{proof}

\begin{observacao}\label{subs}
	It follows from Corollaries~\ref{Corotolema2} 
	that, given $\La,\ell>0$, there exist a subsequence of $(n')$ and  (random) $\ell_1\geq\ell,\ldots,\ell_k\geq\ell$ such that  
	for all $\bar s^{(n')}_k\in\mt^{(n')}_k,\,\bar s_k\in\mt_k$,
	$$\cQ^{(n')}_{j}(\bar s^{(n')}_{j-1})=\cQ_{j}(\bar s_{j-1})=\tcq^{(n')}_{j}(\bar t^{(n')}_{j-1}),$$
	$\max_{\bar s_k\in\mt_k}\max_{1\leq j\leq k} \Big\{\,\big|b^{(n')}_j(\bar{s}^{(n')}_j)-b_j(\bar{s}_j)\big|\,\,\bigvee\,\,\big|d^{(n')}_j(\bar{s}^{(n')}_j)-d_j(\bar{s}_j)\big|\,\Big\}\to0$
	as $n'\to\infty$.
\end{observacao}

\begin{observacao}\label{incr}
	In the previous remark, and the discussion and results leading to it, we may and will take $\ell_j=\ell_j(\ell)$ 
	non-decreasing in $\ell$, $1\leq j\leq k$.
\end{observacao}


\subsubsection{Proof of Theorem~\ref{PrinciTeor}}

We recall that this is a weak convergence result. Our strategy is to consider the coupled versions of $\cz^{(n)}$ and $\cz$ described in Subsubsection~\ref{SecCoupling}, and proceed via a standard subsubsequence argument. Given a subsequence of $(n)$ and $\La$, 
we claim to be able to use Remarks~\ref{subs} and~\ref{incr}  to find a subsubsequence $(n'')$ and a {\em time distortion} $\zeta=\zeta^{(n'')}$ such that almost surely
\begin{equation}\label{jconv1}
	\max_{1\leq j\leq k}\sup_{0\leq t\leq L}|Z^{(n'')}_j(t)-Z_j(\zeta(t))|\to0\,\text{ as }n''\to\infty.
\end{equation}
and 
\begin{equation}\label{jconv2}
	\sup_{0\leq t\leq L}|\zeta(t)-t|\to0\,\text{ as }n'\to\infty,
\end{equation}
where $L=\La^{-1}E$, with $E$ a standard exponential random variable, independent of the dynamics, as before. Upon resorting to Proposition 5.3
in Chapter 3 of~\cite{EK}, and a standard subsubsequence argument, as anticipated above, the result follows

It remains to argue the claim. Let $l_1,\,i=1,2,\ldots$ be an increasing, divergent sequence of integers.
Given $\ell=l_1$, we take the subsequence 
$(n')$ prescribed in Remark~\ref{subs}, and consider the following {\em time distortion} $\zeta_1=\zeta^{(n)}_1$ \footnote{dropping the prime superscript on $n$} such that 
$\zeta_1:[0,\infty)\to[0,\infty)$, $\zeta_1(0)=0$, $\zeta_1(t)=t$, if $t\geq L$, and
\begin{itemize}
	\item $\zeta_1$ is linear outside $\bigcup_{j=1}^k\bigcup_{\bar s_j\in\mt_j}\{b_j(\bar s_j),d_j(\bar s_j)\}$, and
	\item  $\zeta_1$ maps $b_j(\bar s_j)$ to $b^{(n)}_j(\bar s^{(n)}_j)$ and $d_j(\bar s_j)$ to $d^{(n)}_j(\bar s^{(n)}_j)$,
	$\bar s_j\in\mt_j$, $\bar s^{(n)}_j\in\mt^{(n)}_j$, $1\leq j\leq k$ .
\end{itemize}

\begin{observacao}\label{distor}
	\begin{enumerate}
		\item $\zeta_1$ maps each $I_j(\bar s_j)$ onto $I^{(n)}_j(\bar s_j)$, 
		$\bar s_j\in\mt_j$, $\bar s^{(n)}_j\in\mt^{(n)}_j$, $1\leq j\leq k$;
		\item In the discrete case, we may have $\ell_1,\ldots,\ell_k$ such that $\cQ_j(\bar s_{j-1})=\cR_j(\bar s_{j-1})$, 
		$\bar s_j\in\mt_j$, $1\leq j\leq k$;
		and, given the gap condition,
		$n_0$ may be chosen such that $\cQ^{(n)}_j(\bar s^{(n)}_{j-1})=\cR^{(n)}_j(\bar s^{(n)}_{j-1})$,
		for all $\bar s^{(n)}_j\in\mt^{(n)}_j$, $1\leq j\leq k$, and $n$.
		\item It follows from the previous item that, with those choices of $\ell_1,\ldots,\ell_k$ and $n_0$, we have that
		$$
		\bigcup_{\bar s_j\in\mt_j}I_j(\bar s_j)=\bigcup_{\bar s^{(n)}_j\in\mt^{(n)}_j}I^{(n)}_j(\bar s^{(n)}_j)=[0,L), \,1\leq j\leq k,\,n\geq n_0.
		$$
	\end{enumerate}
\end{observacao}

The claim follows readily in the discrete case, with the choice of $(n'')=(n')$ and $\zeta=\zeta_1$,  from Remark~\ref{distor} since the differences in~\eqref{jconv1} vanish as soon as $n'$ is large enough, and~\eqref{jconv2}  follows from Remark~\ref{subs}.

It remains to argue the dense case. Let us write $(n')$ used in the definition of $\zeta_1$ above as $(n_{1i})_{i\geq1}$. 
By Remarks~\ref{subs} and~\ref{incr}  applied to $\ell=l_2$, we may find a sub subsequence of $(n_{1i})_{i\geq1}$, denoted $(n_{2i})_{i\geq1}$ 
and an associated sequence of time distortions $\zeta_2=\zeta_2^{(n_{2i})}$, $i\geq1$, defined as $\zeta_1$ except that $\mt_\cdot^\cdot$ refer to $\ell=l_2$ rather than $l_1$, and accordingly for $b_\cdot^\cdot(\cdot)$ and $d_\cdot^\cdot(\cdot)$. 

We proceed inductively. Given $m\geq 2$ and $(n_{mi})_{i\geq1}$, we find a subsubsequence $(n_{m+1,i})_{i\geq1}$ prescribed in 
Remarks~\ref{subs} and~\ref{incr} applied to $\ell=l_{m+1}$ and define a 
sequence of time distortions $\zeta_{m+1}=\zeta_{m+1}^{(n_{m+1,i})}$, $i\geq1$, similarly as $\zeta_1$, but with $\mt_\cdot^\cdot$, $b_\cdot^\cdot(\cdot)$ and $d_\cdot^\cdot(\cdot)$ associated to $l_{m+1}$.  It follows from Remark~\ref{subs} that for every $m\geq1$
\begin{equation}\label{jconv3}
	\sup_{0\leq t\leq L}|\zeta_m(t)-t|\to0\,\text{ as }i\to\infty,
\end{equation}
and, recalling the first item of Remark~\ref{distor}, $|Z^{(n_{mi})}_j(t)-Z_j(\zeta_m(t))|\to0$ as $i\to\infty$ for 
$t\in\bigcup_{\bar s_j\in\mt_j}I_j(\bar s_j)$, $1\leq j\leq k$, where $\mt_\cdot$ refers to the set associated to $l_m$.
It also follows, also from the latter remark, that 
\begin{equation}\label{jconv4}
	\limsup_{i\to\infty}\max_{1\leq j\leq k}\sup_{0\leq t\leq L}|Z^{(n_{mi})}_j(t)-Z_j(\zeta_m(t))|\leq a_{l_m}.
\end{equation}

We finally, resort to a diagonal subsequence argument, by considering $(n''_i)_{i\geq1}$ such that $n''_i=n_{ii}$ and 
$\zeta=\zeta^{(n''_i)}=\zeta_i^{(n''_i)}$, $i\geq1$, and the claim follows readily from~\eqref{jconv3} and~\eqref{jconv4}
for this choice.

\begin{observacao}\label{rep}
	At this point we may draw attention an advantage of the choice of representation of the Cascading Jump Evolution as $\cz_k$ ---see~\eqref{Z}---, rather then as $\cX_k$ --- see beginning of Section~\ref{cje}. The inequality in~\eqref{jconv4} does not hold in general for the latter representation: (in the dense case) nothing prevents a large value of $\ga_i(\cdot|\bar s_{i-1})$ outside $\bigcup_{\bar s_j\in\mt_j}I_j(\bar s_j)$ if $i<j$, and similarly for the approximating versions of those quantities. 
	
	Conceivably, a version of Theorem~\ref{PrinciTeor} holds with $\cX_k$ replacing $\cz_k$, but further restrictions would be needed on the underlying CFJF's.
	See Remark~\ref{nrep} below. In any case, this would in some sense be unnecessary, since $\cX_k$ are $\cz_k$ are in one to one correspondence, and, as pointed out 
	at the introduction, $\cz_k$ is a natural representation, at least in the context of the aging.
\end{observacao}


\section{Convergence of the Bouchaud and Dean Trap Model}
\label{cbdtm}

In this section we apply Theorem~\ref{PrinciTeor} of the previous section to obtain Theorems~\ref{teo:erg},~\ref{teo:pserg}, and~\ref{teo:1serg}, as anticipated at the introduction. We devote a subsection for each argument.

For  next two subsections, we describe the unscaled BDTM as CJE induced by a CFJF as follows.
Going back to Subsection~\ref{mod}, we have the first level component of our prospective CFJF given by
\begin{equation}\label{tl1}
	S_1(r)=\sum_{x_1=1}^{M_1}\tau^1_{x_1}N^1_{x_1}(r).
\end{equation}
Notice that $\ct_1=\bigcup_{x_1=1}^{M_1}\P^1_{x_1}$, where $\P^1_{x_1}$, $x_1\geq1$, are the Poisson point processes associated to the
Poisson (counting) processes $N^1_{x_1}$, $x_1\geq1$, respectively. In order to conform to the notations of both the introduction and Section~\ref{contcas}, let us keep track of the labels in $\T_1$ by introducing $\eta^1:\T_1\to\{1,2,\ldots\}$ such that 
$\eta^1(s_1)=x_1$ if the (Poisson) point $s_1$ belongs to $\P^1_{x_1}$.

Proceeding inductively, given $S_i(\cdot|\bar s_{i-1})$, $\eta^i(\cdot|\bar s_{i-1})$, $\bar s_{i}\in\ct_{i}$, $1\leq i<k$, where $\eta^1(\cdot|\emptyset)=\eta^1$, let
\begin{equation}\label{tl2}
	S_{i+1}(r|\bar s_i)=\sum_{x_{i+1}=1}^{M_{i+1}}\tau^{i+1}_{\bar x_{i+1}}N^{i+1}_{x_{i+1}}(r|\bar s_i),
\end{equation}
where $x_j=\eta^j(\cdot|\bar s_{j-1})$, $j\leq i$; 
$N^j_{x_j}(\cdot|\bar s_{j-1})$, $x_j\geq1$, $\bar s_{j-1}\in\ct_{j-1}$, $j=1,\dots, k$,  are independent standard Poisson processes,
with $N^1_{x_1}(\cdot|\emptyset)=N^1_{x_1}$; and for $\bar s_i\in\ct_i$ let $\eta^{i+1}(\cdot|\bar s_i):\T_{i+1}(\bar s_i)\to\{1,2,\ldots\}$ such that 
$\eta^{i+1}(s_{i+1}|\bar s_i)=x_{i+1}$ if $s_{i+1}$ belongs to the point process associated to $N^{i+1}_{x_{i+1}}(\cdot|\bar s_i)$.

$\cz_k$ is then the CJE associated to the $k$-CFJF $\ss_k=\{S_j(\cdot| \bar s_{j-1});\,\bar s_{j-1}\in\ct_{j-1},\,1\leq j\leq k\}$ as defined above.
In this (finite volume) context, we may write $\ct_j=\ct_j^M$, $1\leq j\leq k$.

In the next subsections we assume the particular distribution for the (random) family $\tau_\cdot^\cdot$ prescribed at the introduction and in the paragraph of~\eqref{tau}. In the next two subsections, we impose our fine tuning relationship among the volumes prescribed  in the paragraph right below that of~\eqref{tau}, and establish Theorems~\ref{teo:erg},~\ref{teo:pserg} by writing the rescaled corresponding processes as CJE's induced by suitable corresponding rescaled CFJF's. Upon subsequently obtaining the scaling limits of those  CFJF's, and appropriately invoking Theorem~\ref{PrinciTeor}, Theorems~\ref{teo:erg},~\ref{teo:pserg} follow.

\subsection{Proof of Theorem~\ref{teo:erg}}\label{proof1}

The ergodic scaling of Theorem~\ref{teo:erg} corresponds to the rescaling of $\tau_\cdot^\cdot$ as follows: 
\begin{equation}\label{rtau1}
	\ga_\cdot^{(n)}(\cdot|\cdot)=\frac1{n^{1/\a_1}}\tau_\cdot^\cdot,
\end{equation}
where the correspondence between elements of $\ct_\cdot$ and labels of  $\tau_\cdot^\cdot$ are as explained in the paragraphs of~\eqref{tl1}
and~\eqref{tl2}. We thus make
\begin{equation}\label{rs1}
	S_j^{(n)}(r|\bar s_{j-1})=\frac1{n^{1/\a_1}}S_j(r|\bar s_{j-1})=\frac1{n^{1/\a_1}}\sum_{x_{j}=1}^{n^{\a_j/\a_1}}\tau^{j}_{\bar x_{j}}N^{j}_{x_{j}}(r|\bar s_{j-1}),
\end{equation}
for $\bar s_{j-1}\in\ct^{(n)}_{j-1}$, $1\leq j\leq k$,
and it follows from a classical limit theorem for iid random variables attracted to stable laws of indices in $(0,1)$ that
\begin{equation}\label{rs2}
	S_j^{(n)}(r|\bar s_{j-1})\to \tilde S_j(r|\bar s_{j-1}):=\sum_{x\in[0,1]}\ga_j(x|\bar s_{j-1})\tilde N^{j}_{x}(r|\bar s_{j-1})
\end{equation}
in distribution as $n\to\infty$, where $\ga_j(\cdot|\bar s_{j-1})$, $\bar s_{j-1}\in\ct_{j-1}$, are the increment of
independent $\a_j$-stable subordinators,  $1\leq j\leq k$,  which are also independent when $j$ varies.
And $\tilde N^{j}_{x}(\cdot|\bar s_{j-1})$, $x\in[0,1]$, $\bar s_{j-1}\in\ct_{j-1}$, $1\leq j\leq k$, are independent standard Poisson processes.
We may identify the Poisson processes entering both sides of~\eqref{rs2} by taking an enumeration of the countable collection of $x$'s in the sum on the 
right hand side for which $\ga_j(x|\bar s_{j-1})>0$, in the natural way, and, noticing that $\ct^{(n)}_j$ may be embedded (also in the natural way)
in $\ct_j:=\bigcup_{n\geq1}\ct^{(n)}_j$, make sense of the use of $\bar s_{j-1}\in\ct_j$ on both sides.

In order to apply Theorem~\ref{PrinciTeor} we need the convergence in~\eqref{rs2} to be strong, however, so we resort to Skorohod Representation to
go to another probability where we have almost sure convergence in~\eqref{rs2}, and now we may apply Theorem~\ref{PrinciTeor} to readily obtain Theorem~\ref{teo:erg}.

\subsection{Proof of Theorem~\ref{teo:pserg}}\label{proof2}

The scaling now is given by
\begin{equation}\label{rtau2}
	\ga_\cdot^{(n)}(\cdot|\cdot)=\frac1{n^\b}\tau_\cdot^\cdot,
\end{equation}
and the suitable form for the rescaled timeless clocks is as follows
\begin{align}\label{rs3}
	S_j^{(n)}(r|\bar s_{j-1})&=\frac1{n^{\b}}S_j^{(n)}(n^{-\chi_j}r|\bar s_{j-1})\nonumber\\
	&=\frac1{n^{\b}}\sum_{x_{j}=1}^{n^{\a_j/\a_1}}\tau^{j}_{\bar x_{j}}N^{j}_{x_{j}}\big(n^{-\chi_j}r|\bar s_{j-1}\big),
\end{align}
$\bar s_{j-1}\in\ct^{(n)}_{j-1}$, $1\leq j\leq k$,
where $\chi_j=\a_j(1/\a_1-\b)$. Recall the fourth item of Remark~\ref{X-process}.

We now claim that for almost every $\tau_\cdot^\cdot$
\begin{equation}\label{rs4}
	S_1^{(n)}\to \hat S_1
\end{equation}
in distribution, where $\hat S_1$ is an $\a_1$-stable subordinator. Resorting to Skorohod Representation, we have~\eqref{rs4} as an almost sure convergence. Proceeding inductively, we obtain a CFJF $\hat\ss_k=\{\hat S_j(\cdot| \bar s_{j-1});\,\bar s_{j-1}\in\ct_{j-1},\,1\leq j\leq k\}$ consisting of independent
subordinators, such that $\hat S_j(\cdot|\cdot)$ has index $\a_j$, $2\leq j\leq k$, and for each $\bar s_{j-1}\in\ct_{j-1}$, $1\leq j\leq k$, we may produce a compounded matching sequence $\bar t^{(n)}_{j-1}\in\ct^{(n)}_{j-1}$ satisfying the first bullet item of the paragraph of~\eqref{3bul} such that
for almost every $\tau_\cdot^\cdot$
\begin{equation}\label{rs5}
	S_j^{(n)}(\cdot|\bar t^{(n)}_{j-1})\to \hat S_j(\cdot|\bar s_{j-1})
\end{equation}
in distribution, and again resorting to Skorohod Representation, we have~\eqref{rs5} as an almost sure convergence.
An application of Theorem~\ref{PrinciTeor} readily yields Theorem~\ref{teo:pserg}.

In order to argue the claim, it is enough to establish~\eqref{rs4} and~\eqref{rs5}, remarking that the left hand sides
are independent when $j$ and $\bar t^{(n)}_{j-1}$ vary (for all fixed $n$), and their distribution do not depend on $\bar t^{(n)}_{j-1}$.
Those convergences may be essentially obtained from Theorem 1.4 of~\cite{GG15}, where ordinary clock processes are considered, 
instead of our timeless clock processes, but the difference is minor. They may also be formulated as the following result, whose proof 
we present in Appendix~\ref{app2} for completeness.

\begin{lema}\label{lpserg}
	Let $\tau,\tau_i$, $i\geq1$, be iid random variables such that
	\begin{equation}\label{taubis}
		\pr(\tau>t)=\frac{L(t)}{t^{\a}},\,t\geq0,
	\end{equation}
	where $L(t)\to1$ as $t\to\infty$ and $\a\in(0,1)$, and make
	\begin{equation}\label{rs6}
		S^{(n)}(r)=\frac1{n^{\nu/\a}}\sum_{i=1}^{n}\tau_iN_i\,\big(n^{-(1-\nu)}r\big),\,r\geq0, 
	\end{equation}
	where $N_i$, $i\geq1$,are iid standard Poisson processes, independent of the $\tau_i$'s, and $\nu\in(0,1)$. Then for almost every realization of $\tau_i$, $i\geq1$,
	$S^{(n)}$ converges in distribution to an $\a$-stable subordinator.
\end{lema}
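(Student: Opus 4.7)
The plan is to exploit the fact that, conditional on $(\tau_i)_{i\geq 1}$, $S^{(n)}$ is a Lévy process (indeed a compound Poisson process) with Laplace exponent
\begin{equation}
\psi_n(\l):= n^{-(1-\nu)} \sum_{i=1}^n \big(1 - e^{-\l\tau_i/n^{\nu/\a}}\big),\qquad \l>0,
\end{equation}
so that $\Er[e^{-\l S^{(n)}(r)}\mid(\tau_i)]=e^{-r\psi_n(\l)}$. First, using the tail integral representation $\Er[1-e^{-\mu\tau}]=\mu\int_0^\infty e^{-\mu t}\pr(\tau>t)\,dt$, substituting $s=\mu t$ with $\mu=\l/n^{\nu/\a}$, and invoking dominated convergence together with $L(t)\to 1$, one obtains $\Er\psi_n(\l)\to\Ga(1-\a)\l^\a$.

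Next, to establish the quenched convergence $\psi_n(\l)\to\Ga(1-\a)\l^\a$ for almost every realization of $(\tau_i)$, note that the summands $Y_i^{(n)}:=1-e^{-\l\tau_i/n^{\nu/\a}}$ are iid, bounded in $[0,1]$, and satisfy $\Var(Y_i^{(n)})\leq\Er Y_i^{(n)}=O(n^{-\nu})$. Applying Bernstein's inequality to the centered sum yields
\begin{equation}
\pr\big(|\psi_n(\l)-\Er\psi_n(\l)|>\ve\big)\leq 2\exp(-c_\ve n^{1-\nu}),
\end{equation}
which is summable in $n$ since $\nu<1$, so Borel--Cantelli delivers a.s.~convergence for each fixed $\l>0$. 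Because $\psi_n(\cdot)$ is increasing and the limit $\l\mapsto\Ga(1-\a)\l^\a$ is continuous, a standard monotonicity argument upgrades this to a single full-measure event on which $\psi_n(\l)\to\Ga(1-\a)\l^\a$ uniformly on compact subsets of $(0,\infty)$.

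Finally, on that full-measure event, $S^{(n)}$ is, conditionally on $(\tau_i)$, a Lévy process whose Laplace exponent converges to that of the $\a$-stable subordinator, so stationarity and independence of increments immediately give convergence of all finite-dimensional distributions. Tightness in $D(\R_+)$ under the $J_1$ topology is then automatic from the nondecreasingness of $S^{(n)}$ combined with the absence of fixed times of discontinuity for the limit, via a standard tightness criterion for monotone càdlàg processes.

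I expect the main technical obstacle to lie in the quenched a.s.~convergence of $\psi_n$: the summands $Y_i^{(n)}$ depend on $n$ through both the exponential's argument $n^{\nu/\a}$ and the outer normalization $n^{-(1-\nu)}$, so no classical LLN applies directly. A plain Chebyshev bound gives $\Var\psi_n=O(n^{\nu-1})$, which is summable only for $\nu$ sufficiently small; Bernstein's inequality, exploiting simultaneously the uniform bound $Y_i^{(n)}\leq 1$ and the small variance $O(n^{-\nu})$, is what produces the summable exponential tail bound uniformly over the full range $\nu\in(0,1)$.
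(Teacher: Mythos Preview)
Your proof is correct and takes a genuinely different route for the key step, namely the almost-sure convergence of the Laplace exponent $\psi_n(\l)$. The paper partitions $\{1,\ldots,n\}$ into $n^{1-\nu}$ blocks of size $n^\nu$, bounds the exponential moment of each block sum $W_j^{(n)}$ uniformly in $n$, and then invokes a complete-convergence result for triangular arrays due to Hu--Szynal--Volodin to control the centered average of the $W_j^{(n)}$; the expectation convergence is handled separately via uniform integrability of the block sums. You instead apply Bernstein's inequality directly to the individual summands $Y_i^{(n)}=1-e^{-\l\tau_i/n^{\nu/\a}}$, exploiting simultaneously their boundedness in $[0,1]$ and their small variance $O(n^{-\nu})$ to obtain a summable tail bound without any blocking. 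Your argument is more elementary and fully self-contained, avoiding the external array-convergence lemma; the paper's blocking approach, on the other hand, makes the stable-law structure of the block sums visible (each $W_j^{(n)}$ converges in distribution to a sum over the increments of a subordinator), which connects more transparently to the limit object. For the passage from Laplace-exponent convergence to convergence in $D(\R_+)$, the paper simply cites Corollary~3.6 in Chapter~7 of Jacod--Shiryaev, which is exactly the content of your final paragraph; your sketch via finite-dimensional distributions plus monotone tightness is correct but the clean citation is shorter.
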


\subsection{Proof of Theorem~\ref{teo:1serg}}\label{proof3}

We first take the limit as $M_1,\ldots,M_k\to\infty$ of the unscaled process. For simplicity, we take $M_j=M_j^{(n)}\to\infty$ as $n\to\infty$, $1\leq j\leq k$,
in any arbitrary way. Other ways of taking the limit can be similarly treated. Since this first limit is supposed to involve a discrete CFJF, it is convenient to
write the jump functions as
\begin{equation}\label{rs11}
	S_j^{(n)}(r|\bar s_{j-1})= \sum_{\ell=1}^{\lfloor r\rfloor}\tau^{j}_{U^j_{\ell}(\bar s_{j-1})\bar x_{j-1}},        
\end{equation}
where 
$\{U^j_\cdot(\cdot),\,1\leq j\leq k\}$, are independent families of iid random variables,
uniformly distributed in
$\{1,\ldots,M_j\}$, respectively, and  $x_i=U^i_{s_i}(\bar s_{i-1})$, $\bar s_{i-1}\in\ct^{(n)}_{i-1}\equiv\{1,2\,\ldots\}^{i-1}$,  $1\leq i<j$. 

We claim that for almost every $\tau_\cdot^\cdot$
\begin{equation}\label{rs12}
	S_j^{(n)}(\cdot|\bar s_{j-1})\to\check S_j(\cdot|\bar s_{j-1}):=\sum_{\ell=1}^{\lfloor \cdot\rfloor}\tau^{j}_{\ell\bar s_{j-1}}, 
\end{equation}
in distribution, for all $\bar  s_{j-1}\in\ct_{j-1}$. 

We then proceed, along the lines already traced in the above subsections, resorting to Skorohod Representation,
to conclude that for almost every $\tau_\cdot^\cdot$, $\cz$ converges in distribution as $n\to\infty$ to $\check\cz$, the CJE induced by
$\check\ss_k=\{\check S_j(\cdot| \bar s_{j-1});\,\bar s_{j-1}\in\ct_{j-1},\,1\leq j\leq k\}$.

The claim is readily implied by the following result, whose proof we postpone to the end of the subsection.

\begin{lema}\label{l1serg}
	Let $X,X_i$, $i\geq1$, be iid random variables, and $U_i$, $i\geq1$, iid random variables which are uniformly distributed in $\{1,\ldots,M\}$, and independent of the $X_i$'s. Then, for almost every $X_\cdot$
	\begin{equation}\label{rs13}
		(X_{U_1},X_{U_2},\ldots)\to(X_{1},X_{2},\ldots)
	\end{equation}
	in distribution  (in the product topology) as $M\to\infty$.
\end{lema}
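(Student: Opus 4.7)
The plan is to verify convergence of the finite-dimensional distributions of the random sequence conditional on $X_\cdot$, since the product topology on $\R^{\N}$ is generated by the finite-dimensional coordinate projections. So fix $k\geq 1$ and bounded continuous test functions $f_1,\ldots,f_k:\R\to\R$. Because the $U_i$ are iid uniform on $\{1,\ldots,M\}$ and independent of $X_\cdot$, we have the exact identity
\begin{equation*}
\Er\bigl[f_1(X_{U_1})\cdots f_k(X_{U_k})\,\bigm|\,X_\cdot\bigr]
= \prod_{j=1}^{k}\Bigg(\frac{1}{M}\sum_{i=1}^{M}f_j(X_i)\Bigg),
\end{equation*}
where I used that $U_1,\ldots,U_k$ are independent so the conditional expectation factorizes, and each factor averages $f_j$ over the uniform choice of index.

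By the strong law of large numbers, for each fixed bounded continuous $f_j$ the corresponding factor converges almost surely to $\Er[f_j(X)]$ as $M\to\infty$. Hence, almost surely, the product on the right converges to $\prod_{j=1}^{k}\Er[f_j(X)] = \Er\bigl[\prod_{j=1}^{k} f_j(X_j)\bigr]$, which is precisely the expectation of $\prod_j f_j$ under the joint distribution of the iid sequence $(X_1,\ldots,X_k)$.

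To pass from the ``almost sure convergence for fixed test functions'' to ``almost sure convergence of the conditional laws'', I would fix once and for all a countable subfamily $\mathcal{F}\subset C_b(\R)$ which is convergence-determining (for instance, a countable family of continuous functions with compact support that is dense in $C_0(\R)$, or the real and imaginary parts of $e^{i\theta x}$ for rational $\theta$). The event on which the displayed convergence holds for every $k\geq 1$ and every finite tuple of test functions chosen from $\mathcal{F}$ is a countable intersection of full-measure events, hence of probability one. On this event, the conditional law of $(X_{U_1},\ldots,X_{U_k})$ given $X_\cdot$ converges weakly to the product law of $(X_1,\ldots,X_k)$ for every $k$, which is exactly the conclusion in the product topology.

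There is no serious obstacle here; the only subtle point is the switch between ``for almost every $X_\cdot$'' and ``for every bounded continuous test function'', which is handled by the standard countable-determining-class device described above. The ``repeat indices'' contribution (when $U_i=U_j$ for some $i\neq j$, forcing $X_{U_i}=X_{U_j}$) is already absorbed in the factorized product formula and vanishes in the limit since it has probability $O(1/M)$.
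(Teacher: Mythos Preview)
Your proof is correct and takes a different route from the paper's. You condition on $X_\cdot$, write down the exact product formula
\[
\Er\bigl[f_1(X_{U_1})\cdots f_k(X_{U_k})\,\bigm|\,X_\cdot\bigr]=\prod_{j=1}^{k}\frac{1}{M}\sum_{i=1}^{M}f_j(X_i),
\]
apply the strong law of large numbers to each factor, and then invoke a countable convergence-determining class to obtain the quenched weak convergence for almost every realization of $X_\cdot$. The paper instead works unconditionally: it shows that the joint law of $(X_{U_1},\ldots,X_{U_\ell},X_1,\ldots,X_\ell)$ converges to the product of $2\ell$ copies of the law of $X$, by restricting to the event $C\cap D$ that $U_1,\ldots,U_\ell$ are pairwise distinct and all exceed $\ell$ (so the $2\ell$ sampled indices are distinct, hence yield independent $X$-values), and noting that $\pr(C\cap D)\to1$. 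Your approach has the advantage of delivering the almost-sure (quenched) statement directly, whereas the paper's computation, as written, gives annealed joint convergence and relies on an implicit SLLN-type step to reach the ``for almost every $X_\cdot$'' conclusion. One small remark: your closing sentence about repeat indices is superfluous---the factorization you wrote is exact regardless of collisions among the $U_j$, since independence of the $U_j$'s alone makes the conditional expectation split---but it does no harm.
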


We note that $\check\ss_k$ is discrete, and that the convergence in~\eqref{rs12} is readily seen to satisfy the gap condition.

We thus have that for almost every $\tau_\cdot^\cdot$
$\check\cz^{(m)}\to\frac1m\check\cz(m\cdot)$
in distribution as $M_1,\ldots,M_k\to\infty$, for every $m$. 

In order to take the limit in $m$, and thus establish Theorem~\ref{teo:1serg}, we simply observe that the latter process is a CJE induced by $\ss_k^{(m)}$ such that
\begin{equation}\label{rs14}
	S_j^{(m)}(\cdot|\bar s_{j-1})=\frac1m\sum_{x_j=1}^{\lfloor  m^{\a_j}\cdot\rfloor}\tau^{j}_{\bar x_{j}}, 
\end{equation}
$\bar s_{j-1}\in\ct^{(m)}_{j-1},\,1\leq j\leq k$, where the correspondence between $\bar s_{j-1}\in\ct^{(m)}_{j-1}$ and $\bar x_{j-1}$ is (the obvious one) such
that $s_i$ is the $x_i$-th jump of $\T^{(m)}_i(\bar s_{i-1})$ in order (notice that the latter set is discrete).

The result then follows from the classical result saying that the right hand side of~\eqref{rs14} converges (in distribution) to an $\a_j$-stable subordinator,
by applying Theorem~\ref{PrinciTeor}, after resorting to Skorohod Representation, as before.

\paragraph{Proof of Lemma \ref{l1serg}.} It is enough to show that, given $\ell\geq1$, and Borel sets $B_1,\ldots,B_{2\ell}$, we have that
\begin{align}\label{rs15}
	\pr(X_{U_1}\in B_1,\ldots,&X_{U_\ell}\in B_\ell,X_{1}\in B_{\ell +1},\ldots,X_{\ell}\in B_{2\ell})\nonumber\\
    &\to \prod_{i=1}^{2\ell}\pr(X\in B_i),
\end{align}
as $M\to\infty$. In order to do that, consider the following events
\begin{equation}\label{rs16}
	C=\bigcap_{i=1}^\ell\{U_i>\ell\},\quad D=\bigcap_{i,j=1\atop i\ne j}^\ell \{U_i\ne U_j\}.
\end{equation}
It is quite clear that $\pr(C\cap D)\to1$ as $M\to\infty$. The claim of the lemma then follows from this and the fact, that may be readily checked, that
$$\pr(A\cap C\cap D)= \pr(C\cap D) \prod_{i=1}^{2\ell}\pr(X\in B_i),$$
where $A$ is the event inside the probability sign on the left hand side of~\eqref{rs15}. 

\begin{observacao}\label{nrep}
	Let us come back to Remark~\ref{rep} in the context of the BDTM. As pointed out in that remark, we would need further conditions to have the convergence of $\cX_k^{(n)}$. It can be worked out that we would need to further require monotonicity of the $\a_j$'s in the levels: $\a_1<\cdots<\a_k$. Without this, we 
	would see large values of $\ga_i(\cdot|\bar s_{i-1})$ outside $\bigcup_{\bar s_j\in\mt_j}I_j(\bar s_j)$ for $i<j$, and this would prevent the convergence in the $J_1$ Skorohod topology. 
	
	Curiously ---perhaps intriguingly---, this representation and monotonicity condition are both adopted in~\cite{BD}, but nothing seems to indicate that the condition is used in the derivation of the (aging) results therein (as pointed out at the introduction of the present paper). Indeed it is not necessary, as is perhaps clear from the results in this section (from the one to one correspondence of $\cX^{(n)}$ and $\cz^{(n)}$). We come back to this point in more detail in Appendix~\ref{app3}.
	
\end{observacao}

\begin{observacao}\label{nft}
	It is perhaps quite clear from the arguments in this section that in order to obtain convergence results for $\cz$, it is enough to obtain corresponding suitable convergence results for the jump functions in each level of the underlying tree, separately. For this reason, going beyond our fine tuning assumption on the volumes $M_1,\ldots, M_k$ does not pose extra difficulties. For example, we might make a choice leading to ergodic scalings in some levels and polynomial aging scalings in other levels. This would complicate the description of the results, with no essentially greater complexity, so we leave the matter at that. 
	
	(It should perhaps be said that going to longer time scales than the ergodic time scale would yield irregular trajectories for the limiting process, namely, independent observations of the equilibrium distribution at different times, thus disabling convergence in the $J_1$ topology. But this is a straightforward issue.)
\end{observacao}

\begin{acknowledgments}
We would like to thank Renato dos Santos for raising the discontinuity issue discussed in 
Remark~\ref{rmk:j1_cont}. Thanks also to an anonymous referee, whose attentive and thoughtful reading helped correct and also otherwise improve the text.
\end{acknowledgments}

\appendix

\section{Proof of Lemma \ref{LemmaExercicio} }\label{app1}

\setcounter{equation}{0}

\begin{proof}[\textbf{Item (a)}]
	Consider the event $A=\{K(R-)<L<K(R)\}$.
	To show that $R\in\T_S$, it is enough to argue the claim that $\pr(A)=1$, equivalently $\pr(A^c)=0$.  
	Observe that $A^c=\{L\in\fR\}$, where 
	and $\fR$ is the range of $K(\cdot)$.  Note that $L$ is an exponential random variable, independent of $\{\E_s,s\in\T_S\}$, so $L$ is independent of $\fR$, and thus 
	\begin{equation}
		\pr(L\in\fR)=\Er(P(L\in\fR|\fR))=\Er\left(\int_{\fR}\Lambda e^{-\Lambda x}dx\right)=0,
	\end{equation}
	since $\fR$ is almost surely a set of  Lebesgue measure zero, and the claim is established. 
	
	\smallskip
	
	
	\noindent {\it \textbf{Items (b-c)}} 
	Let us make the following computation. 
	%
	Given $k\geq 1$, $r\in\T_S$, and $s_1<\cdots<s_k<r$,
	\begin{align}
		&\pr(R=r,\E_{s_1}>t_1,\ldots,\E_{s_k}>t_k,\hat{\E}_r>t)\nonumber\\
		&=\int{f_{W}(w)dw}\int_{t_1}^{\infty}{f_{\E_{s_1}}(w_1)dw_1}\nonumber\\
        &\hspace{0.5cm}\cdots
		\int_{t_k}^{\infty}{f_{\E_{s_k}}(w_k)dw_k}\,\pr(\E_r>L-\bar{w}_k-w;L>\bar{w}_k+w+t)\nonumber\\
		&=\int{f_{W}(w)dw}\int_{t_1}^{\infty}{f_{\E_{s_1}}(w_1)dw_1}\nonumber\\
        &\hspace{0.5cm}\cdots
		\int_{t_k}^{\infty}{f_{\E_{s_k}}(w_k)dw_k}\int_{\bar{w}_k+w+t}^{\infty}{\Lambda e^{-\Lambda u}e^{-\lambda_r(u-\bar{w}_k-w)}du}\nonumber\\
		&=\frac{\Lambda}{\Lambda+\lambda_r}e^{-(\lambda_r+\Lambda)t}\int{f_{W}(w)dw}\int_{t_1}^{\infty}{f_{\E_{s_1}}(w_1)dw_1}\nonumber\\
        &\hspace{0.5cm}\cdots
		\int_{t_k}^{\infty}{f_{\E_{s_k}}(w_k)dw_k}e^{-\Lambda(\bar{w}_k+w)}\nonumber\\
		&=\frac{\Lambda}{\Lambda+\lambda_r}e^{-(\lambda_r+\Lambda)t}\int{e^{-\Lambda w}f_{W}(w)dw}\int_{t_1}^{\infty}{e^{-\Lambda w_1}f_{\E_{s_1}}(w_1)dw_1}\nonumber\\
        &\hspace{0.5cm}\cdots\int_{t_1}^{\infty}{e^{-\Lambda w_k}f_{\E_{s_k}}(w_k)dw_k}\nonumber\\
		&=\prod_{s<r}\frac{\lambda_s}{\lambda_s+\Lambda}\frac{\Lambda}{\lambda_r+\Lambda}\prod_{i=1}^{k}e^{-(\lambda_{s_i}+\Lambda)t_i}e^{-(\lambda_r+\Lambda)t}\nonumber\\
        &=\pr(R=r)\prod_{i=1}^k\pr(\tilde{\E}_{s_i}>t_i)\pr(\tilde{\E}_r>t),
	\end{align}
	where 
	$W=\displaystyle\sum_{\substack{s<r:\,  s\neq s_1,\ldots,s_k }}\E_s$.
	\begin{equation}
		(\E_{s_1},\ldots,\E_{s_k},\hat{\E}_r)|R=r\sim (\tilde{\E}_{s_1},\ldots,\tilde{\E}_{s_k},\tilde{\E}_r),
	\end{equation}
	as well as~\eqref{R}, follow, and this is enough.

	\medskip
	
	{\it \textbf{Item (d)}} 
	\begin{align*}
		&\pr(\hat{\E}_{r}>x,\check{\E}_r>y,R=r)\\
        &=\pr(L-K(r-)>x,K(r)-L>y)\\
		&=\pr(L>Z+x,\E_r>L+y-Z)\\
		&=\int{f_Z(z)dz}\int_{z+x}^{\infty}{f_L(u)du}\int_{u+y-z}^{\infty}{f_{\E_r}(v)dv}\\
		&=\frac{\Lambda}{\lambda_r+\Lambda}e^{-\lambda_ry}e^{-(\lambda_r+\Lambda)x}\int{f_Z(z)dz}\\
		&=\frac{\Lambda}{\lambda_r+\Lambda}e^{-\lambda_ry}e^{-(\lambda_r+\Lambda)x}E[e^{-\Lambda Z}]\\
		&=e^{-\lambda_ry}e^{-(\lambda_r+\Lambda)x}\prod_{s<r}\frac{\lambda_s}{\lambda_s+\Lambda}\frac{\Lambda}{\lambda_r+\Lambda}\\
		&=\pr(\check{\E}_r>y)\pr(\hat{\E}_{r}>x)\pr(R=r),
	\end{align*}
	where $Z=\displaystyle\sum_{s<r}\E_s$, and the result follows.
\end{proof}

\section{Proof of Lemma~\ref{lpserg}  }\label{app2}

\setcounter{equation}{0}

Let $\tau_i$, $i\geq1$, be fixed.
Since $S^{(n)}$ is itself a subordinator, it is enough, by Corollary 3.6 in Chapter 7 of~\cite{J13}, to show the convergence of the Laplace exponent
of $S^{(n)}$, namely
\begin{equation}\label{lapex}
	\varphi^{(n)}(\theta)=\frac1{n^{1-\nu}}\sum_{i=1}^n \big(1-e^{-\theta\tau_i/n^{\nu/\a}}\big),\,\theta>0,
\end{equation}
as $n\to\infty$ almost surely to a constant times $\theta^\a$. We write the right hand side of~\eqref{lapex} as
\begin{equation}\label{le1}
	\frac1{n^{1-\nu}}\sum_{j=1}^{n^{1-\nu}}W_j^{(n)}:=
	\frac1{n^{1-\nu}}\sum_{j=1}^{n^{1-\nu}}\sum_{i\in B_j^{(n)}} \big(1-e^{-\theta\tau_i/n^{\nu/\a}}\big),
\end{equation}
where $B_j^{(n)}$, $1\leq j\leq n^{1-\nu}$ is a partition of $\{1,\ldots,n\}$ in intervals of (roughly) equal size $n^\nu$.
The summands the of the outer sum in~\eqref{le1} are independent and have essentially the same distribution. Here and below, details are somewhat sketchy, disregarding the proper use of integer parts of $n^\nu$ and $n^{1-\nu}$.
We trust the main steps to be clear, and the necessary adjustments to account for the latter points to be straightforward
(if cumbersome).

The result follows from the following claims.
\begin{equation}\label{le2}
	\frac1{n^{1-\nu}}\sum_{j=1}^{n^{1-\nu}}\bar W_j^{(n)}\to0
\end{equation}
almost surely as $n\to\infty$, where $\bar W_j^{(n)}=W_j^{(n)}-\Er(W_j^{(n)})$;
\begin{equation}\label{le3}
	\frac1{n^{1-\nu}}\sum_{j=1}^{n^{1-\nu}}\Er(W_j^{(n)})\to\text{ const } \theta^\a
\end{equation}
as $n\to\infty$.

Both claims may be seen to follow from known results relying on the following bound.
\begin{equation}\label{le4}
	\sup_{n}\Er(e^{W_1^{(n)}})<\infty
\end{equation}

For~\eqref{le2} we resort to Corollary 2 of~\cite{HSV}, the conditions for which are all readily seen to apply, except perhaps for the existence of a random variable $X$ with a suitable polynomial moment (a second moment in the present case) such that
$\sup_{n}\pr\big(W_1^{(n)}>x\big)\leq D\pr(X>x)$ for all $x$ and some constant $D$.~\eqref{le4} and Markov's inequality readily implies this condition with $X$ exponentially distributed with mean 1 and $D$ as the left hand side of~\eqref{le4}.

As for~\eqref{le3}, it follows from well known results that
\begin{equation}\label{le5}
	W_1^{(n)}\to W=\sum_{x\in[0,1]}\big(1-e^{-\theta\ga_x}\big)
\end{equation}
in distribution as $n\to\infty$, where $\ga_x$ are the increments of an $\a$-stable subordinator. Now~\eqref{le4} implies that
$W_1^{(n)}$, $n\geq1$, are uniformly integrable, and thus 
\begin{equation}\label{le6}
	\Er\big(W_1^{(n)}\big)\to \Er(W)
\end{equation}
as $n\to\infty$; Campbell's Theorem then implies that $\Er(W)=$ const $\theta^\a$. A direct computation involving an integral like the one in~\eqref{le9} below also yields this result.

It remains to check~\eqref{le4}. It is enough to show that
\begin{equation}\label{lex}
	\limsup_{n}\Er(e^{W_1^{(n)}})<\infty.
\end{equation}

The expectation may be written as 
\begin{equation}\label{le7}
	\big\{\Er\big(e^{1-e^{-\theta\tau/m^{1/\a}}}\big)\big\}^m,
\end{equation}
where $m=n^\nu$. 
Setting $\theta_m=\theta/m^{1/\a}$, we write the expected value in~\eqref{le7} as $1+\Psi_m(\theta)$, where
\begin{align}\nn 
	\Psi_m(\theta)&=\int_0^\infty (e^{1-e^{-\theta_m x}}-1)\,dF(x)\\
    &\stackrel{\text{\tiny by parts}}
	=\int_0^\infty\bar F(x) (e^{1-e^{-\theta_m x}}-1)e^{-\theta_m x}\theta_m dx\nonumber \\ 
	&=\int_0^\infty\bar F(x/\theta_m) (e^{1-e^{-x}}-1)e^{-x}dx,\label{le8}
\end{align}
where $F$ is the distribution function of $\tau$ and $\bar F=1-F$. We have from~\eqref{taubis} that
$\bar F(x/\theta_m)=\frac1m\frac{\theta^\a}{x^\a}\,L(m^{1/\a}\theta^{-1}x)$. It then follows from the asymptotics of $L$
and dominated convergence that 
\begin{equation}\label{le9}
	m\Psi_m(\theta)\to\theta^\a\int_0^\infty\frac1{x^\a} (e^{1-e^{-x}}-1)e^{-x}dx	
\end{equation}
as $m\to\infty$, and~\eqref{lex} follows from this and~\eqref{le7}.

\section{Aging}\label{app3}

\setcounter{equation}{0}

We called the scalings underlying Theorems~\ref{teo:pserg} and~\ref{teo:1serg} {\em aging} scalings, and this is related to the self-similarity of the distribution of
$\hat\cz$, as follows. Let us remark that, given $\theta>0$, $\hat\cz^{\theta}(\cdot):=\theta^{-1}\hat\cz(\theta\cdot)$ is a CJE induced by 
$\hat\ss^\theta=\{\hat S^\theta_j(\cdot| \bar s_{j-1});\,\bar s_{j-1}\in\ct^\theta_{j-1},\,1\leq j\leq k\}$, where
\begin{equation}\label{sca}
	\hat S^\theta_j(\cdot| \bar s_{j-1})=\theta^{-1}\hat S_j(\cdot \,\theta^{\a_j}| \bar s_{j-1}).
\end{equation}
Recall the  fourth item of Remark~\ref{X-process}. The well known scale invariance of stable subordinators says that the distribution of the right hand side of~\eqref{sca} does not depend on $\theta$, and, thus, neither does that of $\hat\cz^{\theta}(\cdot)$. 

This may be seen as an aging property of $\hat\cz$. In particular, from Theorems~\ref{teo:pserg} and~\ref{teo:1serg}, given a(n almost surely) continuous function of the portions of the trajectory of $\cz$ in $[0,t_w]$ and  $(t_w+t]$,
where $t_w,t>0$ are fixed, $G_{t_w,t}(\cz)=\cg\big(\cz|_0^{t_w},\cz|_{t_w}^{t_w+t}\big)$, we have that for almost every $\tau_\cdot^\cdot$
\begin{align}\label{al1}
	&\lim_{n\to\infty}\Er\big(G_{t_w,t}(\hat\cz^{(n)}\big)\\
    &=\lim_{m\to\infty}\lim_{M_1,\ldots,M_k\to\infty}\Er\big(G_{t_w,t}(\check\cz^{(m)})\big)\\
	&=\Er\big(G_{t_w,t}(\hat\cz)\big)\\
    &=\Er\big(G_{1,t/t_w}(\hat\cz)\big),
\end{align}
where the latter equality follows from the self-similarity of (the distribution of) $\hat\cz$. 

The functions in~\eqref{pi} have this form, with $G_{t_w,t}$ as the indicator function of the event of no jump in $[t_w,t_w+t]$, but this is {\em not} continuous in the $J_1$-Skorohod topology\footnote{it does not depend on the first portion of the trajectory, but this is of course not a problem}. In this case, however, as pointed out in Remark~\ref{Pi},
$\Pi_j(t,t_w)= \Er(e^{-t/Z_j(t_w)})$, and we may thus apply~\eqref{al1} with $G_{t_w,t}(\cz)=e^{-t/Z_j(t_w)}$ to obtain~\eqref{pip} and~\eqref{pi1} with $f_j(\theta)= \Er(e^{-\theta/\hat Z_j(1)})$, $\theta>0$, $1\leq j\leq k$.

Another aging function commonly considered is (in the present context) 	
$$R_j(t,t_w)=\pr(Z_j(t_w)=Z_j(t_w+t]).$$
We again have an event inside the latter probability whose indicator function is not continuous in the $J_1$-Skorohod topology.
Going around this point to establish that 
\begin{align}\label{al2}
	\pr\big(&Z_j(n^\b t_w)=Z_j(n^\b(t_w+t))\big)\\
    &\hspace{1cm}\to\pr\big(\hat Z(t_w)=\hat Z_j(t_w+t)\big)=f_j(t/t_w)
\end{align}
as $n\to\infty$ for almost every $\tau_\cdot^\cdot$, $1\leq j\le k$, as soon as $\b<1/\a_1$ (and the corresponding result
under order 1 aging scaling) demands a longer argument (for the convergence; the latter equality in~\eqref{al2}  follows 
from fact that $Z_\cdot$ quite clearly never visits the same state twice), which we leave as an exercise.

An aging function which fits the description in the paragraph of~\eqref{al1} above is as follows
$$\pr\big(\max_{0\leq s\leq t_w}Z_j(s)<\max_{t_w\leq s\leq t_w+t}Z_j(s)\big),$$
which may be understood as the {\em prospects of novelty in the system}. The indicator function of the event in the latter probability may be readily checked to be almost surely continuous with respect to the distribution of $\hat\cz$, and we thus immediately get that
\begin{align*}
\pr\big(&\max_{0\leq s\leq n^\b t_w}Z_j(s)<\max_{n^\b t_w\leq s\leq n^\b(t_w+t)}Z_j(s)\big)\\
&\hspace{1cm}\to
\pr\big(\max_{0\leq s\leq t_w}\hat Z_j(s)<\max_{t_w\leq s\leq t_w+t}\hat Z_j(s)\big)=\tilde f(t/t_w)
\end{align*}
as $n\to\infty$ for almost every $\tau_\cdot^\cdot$, $1\leq j\le k$, as soon as $\b<1/\a_1$ (and the corresponding result
under order 1 aging scaling), where $\tilde f(\cdot)=\pr\big(\max_{0\leq s\leq 1}\hat Z_j(s)<\max_{1\leq s\leq 1+\cdot}\hat Z_j(s)\big)$.

In the the next subsection, we present (briefly) a derivation of a more explicit expression for $f_j$, as anticipated in the paragraph below~\eqref{pii} above.

\subsection{A more explicit expression for $f_j$}\label{fj}

It is convenient to put the `no jump beyond level $j$' event in the probability in the definition of $\Pi_j(t,t_w)$ (as in the right hand side of~\eqref{pi}) in terms of the clock processes, rather than the $\cz$ process directly (as often done in the literature, when dealing with this aging function); more specifically, the clock parts. In the case of $\hat\ss$, these processes are also stable subordinators, with the same index as the respective jump functions which originate them. Namely,
\begin{equation}\label{kj}
	\hat K_{i}(r|\cdot)=\int_0^{r}E_j(s)\,d\hat S_i(s|\cdot),\,r\geq0, 1\leq i\leq k,
\end{equation}
are $\a_i$-stable subordinators, as $\hat S_i(r|\cdot),\,r\geq0$, respectively. Let $\fR_i(\cdot)$ denote the range of 
$\{\hat K_{i}(r|\cdot),\,r\geq0\}$, and for $t>0$, let $\fy_i(t|\cdot)=\sup\{[0,t]\cap\mr_i(\cdot)\}$ and  
$\fz_i(t|\cdot)=\inf\{[t,\infty)\cap\mr_i(\cdot)\}$. It is a classical result, going back to Dynkin and Lamperti, that for every fixed $t>0$
\begin{align}\label{dl}
	&\Big(\frac1t\fy_i(t|\cdot),\frac1t\fz_i(t|\cdot)\Big)\sim(\fy_i,\fz_i),\text{ where }\nonumber\\	&\pr(\fy_i>y,\fz_i>z)=\frac{\sin\pi\a_i}\pi\int_y^1(z+u)^{-\a_i}(1-u)^{\a_i-1}du, 
\end{align}
$y\in(0,1),\,z>0$; "$\sim$" means "distributed as"; see, e.g.~Section 9 of~\cite{E}.

The 'no jump beyond level $j$' event for $\hat\cz$ may then be written as (using the self similarity of  its distribution to reduce to 
$t_w=1,t=\theta$, with an arbitrary $\theta>0$)
\begin{widetext}
\begin{equation}\label{njj}
	\big\{\fz_1(1)>\theta; \fz_2\big(\fy_1(1)|\zeta_1\big)>\theta;\ldots;\fz_j\big(\fy_{j-1}\big(\cdots\fy_2(\fy_1(1)|\zeta_1)\cdots|\bar\zeta_{j-1}\big)>\theta\big\},
\end{equation}
\end{widetext}
where $\zeta_i$ corresponds to the jump location of $Z_i$ at time 1. 
From the independence of the jump functions constituting $\hat\ss$ and~\eqref{dl} , it follows then that we may write
\begin{equation}\label{njj1}
	\Pi_j(\theta,1)=\pr\big(\fz_1>\theta;\fy_1\fz_2>\theta;\ldots;\bar\fy_{j-1}\fz_j>\theta\big),
\end{equation}
where $(\fy_1,\fz_1),\dots(\fy_k,\fz_k)$ are independent random vectors distributed as prescribed  in~\eqref{dl}, respectively.
We will now show by induction that the latter probability equals $\pr\big(\bar\fy_{j}>\theta/(1+\theta)\big)$.

Conditioning on $(\fy_1,\fz_1)$, we find that the probability in~\eqref{njj1} equals 
\begin{widetext}
\begin{equation}\label{njj2}
	\frac{\sin\pi\a_1}\pi	\int_0^1\pr\Big(\fz_2>\theta/y;\ldots;\bar\fy|_{2}^{j-1}\,\fz_j>\theta/y\Big)(\theta+y)^{-\a_1}(1-y)^{\a_1-1}dy,
\end{equation}
\end{widetext}
where $\bar\fy|_{2}^{j-1}=\prod_{i=2}^{j-1} \fy_i$. 
By the induction hypothesis, the latter probability equals $\pr\Big(\bar\fy|_{2}^{j}>\frac\theta{y+\theta}\Big)$. 
Using this, and changing the variable of integration through $x=\frac{y+\theta}{1+\theta}$, we find that the right hand side of~\eqref{njj2} equals
\begin{align}\label{njj3}
	&\frac{\sin\pi\a_1}\pi\int_{\theta'}^1\pr\big(x\,\bar\fy|_{2}^{j}>\theta'\big) x^{-\a_1}(1-x)^{\a_1-1}dy\nonumber\\
	&=\pr\big(\fy_{1}>\theta';\,\bar\fy_{j}>\theta'\big)=\pr\big(\bar\fy_{j}>\theta'\big),
\end{align}
where $\theta'=\theta/(1+\theta)$, and the induction step is complete.

\begin{observacao} 
	It is perhaps worth to emphasize, a point already brought up in~\cite{SN}, that this form of the 'no jump' correlation functions of the aging (scaling limit of the) BDTM is indeed different from the ones for the GLTM (as far as they are comparable, namely, when $\a_1<\cdots<\a_k$).
	Not only the right hand side of~\eqref{njj3} is not an arcsine law (for $j\geq2$, reflecting the fact that the product of two independent Beta($\a,1-\a$)  and Beta($\a',1-\a'$) distributed random variables does not have a Beta($\a'',1-\a''$) distribution for any $\a,\a',\a''\in(0,1)$), but their behaviors near 0 and $\infty$ also differ from the ones for the GLTM. We may safely say that these two models are in different {\em aging universality classes} (apart from other more obvious differences).
\end{observacao}
\nocite{*}
\bibliography{bibliografia}

\end{document}